\newtheorem{theoreme}{Theorem}[section]
\newtheorem{fait}[theoreme]{Fact}
\newtheorem{lemme}[theoreme]{Lemma}
\newtheorem{proposition}[theoreme]{Proposition}
\newtheorem{remarque}[theoreme]{Remark}
\newtheorem{corollaire}[theoreme]{Corollary}
     \newcommand{\Can}{\hbox{\rm \tiny an}}
     \newcommand{\semis}{\hbox{\rm \tiny ss}}
     \newcommand{\red}{\hbox{\rm \tiny red}}
     \newcommand{\ad}{\hbox{\rm \tiny ad}}
    \newcommand{\der}{\hbox{\rm \tiny der}}
   \newcommand{\geo}{\hbox{\rm \tiny geo}}
   \newcommand{\op}{\hbox{\rm \tiny op}}
               \newcommand{\ULA}{{\hbox{\rm \tiny ULA}}}
                \newcommand{\liss}{{\hbox{\rm \tiny liss}}}
  \newcommand{\Sim}{\hbox{\rm im}}
  \newcommand{\Sdim}{\hbox{\rm dim}}
 \newcommand{\SH}{\hbox{\rm H}}
\newcommand{\IC}{\hbox{\rm IC}}
  \newcommand{\SGL}{\hbox{\rm GL}}
      \newcommand{\SHom}{\hbox{\rm Hom}}
           \newcommand{\SEnd}{\hbox{\rm End}}
                         \newcommand{\SLie}{\hbox{\rm Lie}}
                    \newcommand{\spec}{\hbox{\rm spec}}
                      \newcommand{\Rep}{\hbox{\rm Rep}}
                         \newcommand{\Perv}{\hbox{\rm Perv}}
                             \newcommand{\Vect}{\hbox{\rm Vect}}
                          \newcommand{\Loc}{\hbox{\rm Loc}}
     \newcommand{\C}{\mathbb{C}}
      \newcommand{\G}{\mathbb{G}}
         \newcommand{\Q}{\mathbb{Q}}
     \newcommand{\Z}{\mathbb{Z}}
\newcommand{\cA}{\mathcal{A}}
\newcommand{\cD}{\mathcal{D}}
\newcommand{\cS}{\mathcal{S}}
\newcommand{\cN}{\mathcal{N}}
\newcommand{\cL}{\mathcal{L}}
\newcommand{\cK}{\mathcal{K}}
\newcommand{\cC}{\mathcal{C}}
\newcommand{\cF}{\mathcal{F}}
\newcommand{\cO}{\mathcal{O}}
\newcommand{\cY}{\mathcal{Y}}
\newcommand{\cZ}{\mathcal{Z}}
\newcommand{\cP}{\mathcal{P}}
\newcommand{\cQ}{\mathcal{Q}}
\newcommand{\cU}{\mathcal{U}}
\newcommand{\cV}{\mathcal{V}}
\newcommand{\cT}{\mathcal{T}}
\newcommand{\cX}{\mathcal{X}}
\newcommand{\Spec}{\mathrm{Spec}}
\newcommand{\Gal}{\mathrm{Gal}}
\newcommand{\cpt}{\mathrm{cpt}}
\title{Variation of Tannaka groups of perverse sheaves in family}
\author{Anna Cadoret and Haohao Liu}
\date{\today}
\begin{document}
\maketitle

\begin{abstract}  Let $k$ be a field of characteristic $0$, let $S$ be a smooth, geometrically connected variety over $k$, with generic point $\eta$, and $f:\cX\rightarrow S$ a morphism separated and of finite type. Fix a  prime $\ell$. Let  $\cP$ be an $f$-universally locally acyclic relative perverse $\overline{\Q}_\ell$-sheaf  on $\cX/S$. We prove that if for some (equivalently, every) geometric point $\bar \eta$ over $\eta$ the restriction $\cP|_{\cX_{\bar \eta}}$ is simple as a perverse $\overline{\Q}_\ell$-sheaf on $\cX_{\bar \eta}$, then there is a non-empty open subscheme $U\subset S$ such that, for every geometric point $\bar s$ on  $U$, the restriction $\cP|_{\cX_{\bar s}}$ is  simple as a perverse $\overline{\Q}_\ell$-sheaf on $\cX_{\bar s}$. When  $f:\cX\rightarrow S$ is an abelian scheme, we give applications of this result to the variation with $s\in S$ of the Tannaka group of $\cP|_{\cX_{\bar s}}$.
 
 \end{abstract}

\DeclareRobustCommand{\SkipTocEntry}[5]{}
\setcounter{tocdepth}{2} 
\tableofcontents

\textit{}\\
\section{Introduction}
\noindent Let $k$ be a field of characteristic $0$, let $S$ be a smooth, geometrically connected variety over $k$, with generic point $\eta$, and let $g:\cY\rightarrow S$ be a smooth projective $S$-scheme of relative dimension $d$. A general and central question in algebraic geometry is to understand how the fibers $\cY_s$ vary with $s\in S$. For instance, one may ask when some power $\cY_{\bar s}^n$  of $\cY_{\bar s}$ carries exceptional algebraic cycles. If $k\subset \C$, the Hodge conjecture predicts that this is the same as asking when the Mumford Tate group  $G(\cV)_s$ of the polarizable $\Q$-Hodge structure $\SH^\bullet(\cY^{\Can}_{s},\Q)\simeq s^*R^\bullet g^{\Can}_*\Q$  becomes smaller than the generic Mumford-Tate group $G(\cV)$ of the  polarizable $\Q$-variation of Hodge structures $\cV:=R^\bullet g^{\Can}_*\Q $ on the analytification $S^{\Can}$ of  $S\times_k\C$. Here "becomes smaller" makes sense because the Tannaka categories of  polarizable $\Q$-variations of Hodge structures are functorial with respect to pullbacks along morphism of complex analytic spaces $S'\rightarrow S$ so that one can view naturally $G(\cV)_s$ as a subgroup of $G(\cV)$. This leads to introduce and study the Hodge locus $$S_{\cV}:=\lbrace s\in S\;|\; G(\cV)_s\subsetneq G(\cV)\rbrace $$ of a  polarizable $\Q$-variation of Hodge structures $\cV$ on $S^{\Can}$. If $k$ is finitely generated over $\Q$, similar considerations apply with  $\ell$-adic \'etale local systems on $S$ yielding the introduction of the Tate locus
 $S_{\cV_\ell}\subset S$ of such a $\Q_\ell$-local system $\cV_\ell$. Under mild assumptions, general heuristics predict that these exceptional loci $S_{\cV}$, $S_{\cV_\ell}$ are sparse in some precise sense - \textit{e.g.} that the atypical part of the Hodge locus $S_\cV$ is not Zariski-dense in $S$ \cite{KlinglerICM} or that the set of $k$-rational points in the Tate locus $S_{\cV_\ell}$ is not Zariski-dense in $S$ \cite{Bas}. Proving such sparsity results is notoriously challenging as it requires constructing bridges between the Zariski topology of $S$ and the analytic natures of the coefficients $\cV$, $\cV_\ell$. The results of this article are also partly motivated  by the problem of understanding  how the fibers $\cY_s$ vary with $s\in S$ and inspired by the above Tannaka approaches,  but in a more restricted setting and with a rather different category of coefficients, which makes the sparsity of the exceptional loci more accessible. Namely, if one assumes $g:\cY\rightarrow S$ factors as $$ g:\cY\stackrel{\iota}{\hookrightarrow}\cX\stackrel{f}{\rightarrow}S$$
 with $\iota:\cY\hookrightarrow \cX$ a closed immersion and $f:\cX\rightarrow S$ an abelian scheme, one can consider the $f$-universally locally acyclic  ($f$-ULA or simply ULA for short) relative perverse sheaf $\cP:=\iota_*\overline{\Q}_\ell[d]$ on $f:\cX\rightarrow S$. As the quotient of the category of $f$-ULA relative perverse sheaves by negligible ones is Tannaka and functorial with respect to pullback along morphism  of schemes $S'\rightarrow S$, one can attach to each $s\in S$  a Tannaka group  $G(\cP)_s$  which detects some of the symmetric features of   $\cY_s$ regarded as a closed subvariety of $ \cX_s$ and ask for the structure of the corresponding degeneracy locus $S_{\cP}\subset S$. \\

\noindent More formally, let $f:\cX\rightarrow S$ be an abelian scheme. Fix a  prime $\ell$.  Let  $D_c^b(\cX)$ denote the  triangulated category    of  \'etale $\overline{\Q}_\ell$-sheaves with bounded constructible cohomology on $\cX$ and let $D^{\ULA}(\cX/S)\subset D_c^b(\cX)$ denote the full subcategory\footnote{The assumption $f$-ULA is not very restrictive as, for every $\cK\in D_c^b(\cX)$ there exists a non-empty open subscheme $U\subset S$ such that $\cK|_{\cX\times_SU}\in D^\ULA(\cX\times_SU/U)$; see \cite[Thm. 2.13, p. 242]{SGA4Half} and \cite[Lemma 3.10]{Barrett}.} of those complexes which are $f$-universally locally acyclic; this is a triangulated subcategory. The  convolution product  built out from the multiplication on $\cX$ endows $D^{\ULA}(\cX/S)$  with a structure of $\overline{\Q}_\ell$-linear rigid symmetric monoidal category.  This monoidal structure, in turn, induces a structure of $\overline{\Q}_\ell$-Tannakian category on the quotient 
  $\Perv^{\ULA}(\cX/S)\twoheadrightarrow P^{\ULA}(\cX/S)$  of the full subcategory   $\Perv^{\ULA}(\cX/S)\subset D^{\ULA} (\cX/S)$ of $f$-ULA relative  perverse $\overline{\Q}_\ell$-sheaf    by the  Serre subcategory of negligible objects. When $S$ is a point, one recovers the usual construction $\Perv(X)\twoheadrightarrow P(X)$ of the $\overline{\Q}_\ell$-Tannakian category of perverse sheaves on an abelian variety $X$. Further, in the relative setting, for every $s\in S$ and geometric point $\overline{s}$ over $s$, the canonical restriction functors 
$$\Perv^\ULA(\cX/S)\stackrel{|_{\cX_s}}{\rightarrow} \Perv(\cX_s)\stackrel{|_{\cX_{\bar s}}}{\rightarrow} \Perv(\cX_{\bar s})$$
induce exact tensor functors
$$P^\ULA(\cX/S)\stackrel{|_{\cX_s}}{\rightarrow}  P(\cX_s)\stackrel{|_{\cX_{\bar s}}}{\rightarrow}  P(\cX_{\bar s}).$$
In particular, fixing a fiber functor $\omega_{\bar s}:P(\cX_{\bar s})\rightarrow \Vect_{ \overline{\Q}_\ell}$, one may ask,  for  $\cP\in \Perv^{\ULA}(\cX/S)$, how the corresponding Tannaka groups $$G(\cP|_{\cX_{\bar s}},\omega_{\bar s}) \subset G(\cP|_{\cX_s},\omega_{\bar s})\subset G(\cP,\omega_{\bar s})$$
vary\footnote{Recall that if $k$ is algebraically closed and $K/k$ is an extension of algebraically closed fields then the canonical restriction functor  $P(X)\rightarrow P(X_K)$, is a fully faithful  tensor functor with image stable under subquotients, so that the category $\langle \cX_{\bar s}\rangle $  and the group $G(\cP|_{\cX_{\bar s}})$ do not depend on the choice of the geometric point $\bar s$ over $s\in S$ but only on $s$ itself. } with $s\in S$. Further, as $S$ is smooth over $k$,  the canonical functor 
$$-|_{\cX_\eta}:\langle \cP\rangle \rightarrow \langle \cP|_{\cX_\eta}\rangle$$ is an equivalence of categories and, for every $s\in S$ one gets a natural (up to inner automorphisms) cospecialization diagram (see Section \ref{Sec:Specialization}):
\begin{equation}\label{Diag:SpecIntro}\xymatrix{1\ar[r]& G(\cP|_{\cX_{\bar s}},\omega_{\bar s})\ar[r]\ar@{_{(}->}[d]&G(\cP|_{\cX_s},\omega_{\bar s})\ar[r]\ar@{_{(}->}[d]&G( \langle\cP|_{\cX_s}\rangle_0,\omega_{\bar s})\ar[r]\ar[d]&1\\
1\ar[r]& G(\cP|_{\cX_{\bar \eta}},\omega_{\bar \eta})\ar[r]&G(\cP|_{\cX_\eta}, \omega_{\bar \eta})\ar[r]&G( \langle\cP|_{\cX_\eta}\rangle_0,\omega_{\bar \eta})\ar[r]&1,\\}\end{equation}
where, for $t\in S$, the category $\langle\cP|_{\cX_t}\rangle_0\subset \langle\cP|_{\cX_t}\rangle$ denotes the full subcategory whose objects are of the form $0_{t*}\cL$ for $\cL\in \Perv(\spec(k(t)))$ and $0_t:\spec(k(t))\rightarrow \cX_t$  the zero-section. As the existence of this cospecialization diagram does not depend on the choice of the fiber functors, we omit them from the notation from now on.  In other words, one would like to understand the arithmetico-geometric structure of the following degeneracy loci:
$$S_{\cP}^?:=\lbrace s\in S \;|\; G(\cP|_{\cX_s} )^?\subsetneq G(\cP|_{\cX_\eta} )^?\rbrace $$
$$S^{\geo,?}_{\cP}:=\lbrace s\in S \;|\; G(\cP|_{\cX_{\bar s}} )^?\subsetneq G(\cP|_{\cX_{\bar \eta}})^?\rbrace, $$
where \textit{e.g.}
$$\begin{tabular}[t]{ll|ll}
 
?$=$&no decoration&$G$&;\\
&$\circ$&$G^\circ$&$:=$ neutral component of $G$;\\
&der&$G^{\der}$&$:=$ derived subgroup of $G$;\\
&$\circ$, der &$G^{\circ, \der}$.& \\
\end{tabular}$$
For $S_{\cP}^{\geo}$, this question has been tackled in  Kr\"{a}mer's dissertation thesis \cite[3.7]{KramerDiss}; in particular Kr\"{a}mer observes that one cannot expect, in general,  that $S_{\cP}^{\geo}$ be a strict, Zariski-closed subset of $S$ unless the determinant  $\det(\cP|_{\cX_{\bar s}})$ of  $\cP|_{\cX_{\bar s}}$ is torsion and uniformly bounded with $s$ \cite[Ex. 3.17 a)]{KramerDiss}. In the converse direction,  Kr\"{a}mer proves the following.

\begin{fait}\label{Fact:KramerDiss}\textit{}\hbox{\rm (\cite[Prop. 3.20)]{KramerDiss}, \cite[Prop. 7.4]{kramer15tannaka})} Let $\cP\in  \Perv^\ULA(\cX/S)$. Assume that for every geometric point $\bar s$ over  $s\in  S$, the restriction $\cP|_{\cX_{\bar s}}$ is simple in  $\Perv(\cX_{\bar s})$ with torsion  determinant\footnote{As a connected reductive group $G$ over an algebraically closed field $Q$ of characteristic $0$ admits an irreducible faithful representation if and only if its center is  $  \G_{m,Q}$ or   finite cyclic, the condition that $\cP|_{\cX_{\bar s}}$ is simple with torsion determinant imposes that $G(\cP|_{\cX_{\bar s}})^\circ$ is semisimple with   finite cyclic center.
}    and that the order of $\det(\cP|_{\cX_{\bar s}})$ is uniformly bounded with $s\in S$. Then   $S^{\geo}_{\cP}$ is not Zariski-dense in $S$.
\end{fait}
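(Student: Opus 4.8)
The plan is to compare, as $s$ varies over $S$, the Tannaka groups $G_{\bar s}:=G(\cP|_{\cX_{\bar s}})$ inside $\SGL(V)$, where $V:=\omega_{\bar \eta}(\cP|_{\cX_{\bar \eta}})$, using the cospecialization diagram \eqref{Diag:SpecIntro}, which (for suitable fiber functors) realizes each $G_{\bar s}$ as a subgroup of $G:=G(\cP|_{\cX_{\bar \eta}})$ up to conjugation in $\SGL(V)$. Since $\cP|_{\cX_{\bar\eta}}$ is simple, hence geometrically semisimple, the category $\langle\cP|_{\cX_{\bar\eta}}\rangle$ is semisimple (convolution preserves semisimplicity, via the decomposition theorem for the proper addition map), so $G$ and every $G_{\bar s}$ are reductive. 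I would first record the three consequences of the hypotheses used below: (i) $\cP|_{\cX_{\bar s}}$ is simple if and only if $V$ is an irreducible $G_{\bar s}$-module; (ii) $\det(\cP|_{\cX_{\bar s}})$ torsion forces $G_{\bar s}^{\circ}$ to be semisimple with cyclic center (footnote to Fact \ref{Fact:KramerDiss}); (iii) the uniform bound on the order of $\det(\cP|_{\cX_{\bar s}})$ bounds, independently of $s$, the order of the group of scalars $G_{\bar s}\cap(\G_m\cdot\id_V)$.

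Second, I would show that $s\mapsto \dim G_{\bar s}$ is constant, equal to $\dim G$, on a dense open $U_1\subseteq S$. The Hilbert function $n\mapsto h_{\bar s}(n)$ of the coordinate ring of $G_{\bar s}$ (filtered via the tautological representation $V$) is encoded by the dimensions $\dim\Hom_{P(\cX_{\bar s})}(\mathbf 1,\cQ)$ as $\cQ$ ranges over convolution products of $\le n$ factors among $\cP|_{\cX_{\bar s}}$ and $\cP^{\vee}|_{\cX_{\bar s}}$; each such $\Hom$-space is a cohomology group of a convolution of $f$-ULA sheaves, and since convolution of $f$-ULA sheaves commutes with restriction to geometric fibers, Deligne's generic base change and constructibility of direct images give $h_{\bar s}(n)=h_{\bar\eta}(n)$ for $s$ outside a proper closed subset $Z_n\subseteq S$. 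As $h_{\bar s}$ is eventually polynomial of degree $\dim G_{\bar s}$ with a regularity bound depending only on $G$, finitely many of the equalities $h_{\bar s}(n)=h_{\bar\eta}(n)$ determine $\dim G_{\bar s}$, and intersecting the corresponding opens yields $U_1$. This is the step that genuinely uses (ii): without it the extra part of $G$ could be a central $\G_m$ degenerating to a finite cyclic group on a Zariski-dense set, which is exactly the obstruction in \cite[Ex.~3.17 a)]{KramerDiss}. On $U_1$ one has $G_{\bar s}^{\circ}=G^{\circ}$, hence $G^{\circ}\subseteq G_{\bar s}\subseteq G$, so $G/G_{\bar s}$ is a quotient of the fixed finite group $\pi_0(G)$.

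Finally I would treat the remaining finite ``component-group'' part over $U_1$. A strict inclusion $G_{\bar s}\subsetneq G$ with $G^{\circ}\subseteq G_{\bar s}$ gives a nontrivial finite quotient $G\twoheadrightarrow Q_s$ with kernel $G_{\bar s}$; by Tannaka duality a faithful representation of $Q_s$ pulls back to an object $\cP_{\rho}\in\langle\cP|_{\cX_{\bar\eta}}\rangle$ which is a direct summand of some convolution power $\cP^{*n}$ ($n$ bounded in terms of $G$), is nontrivial on $\cX_{\bar\eta}$, but restricts on $\cX_{\bar s}$ to a direct sum of copies of the unit object $\delta_0$. As $\pi_0(G)$ is finite there are only finitely many such $\cP_{\rho}$; for each, the corresponding direct summand of $\cP^{*n}$ is $f$-ULA over a dense open of $S$, with a constructible support $Z\subseteq\cX$, and its ``becoming a sum of copies of $\delta_0$ on $\cX_{\bar s}$'' forces either $Z\cap\cX_{\bar s}$ to drop dimension or the relevant torsion multisections of $\cX/S$ to specialize into the zero section — in either case a non-dense closed condition $W_{\rho}\subseteq S$. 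Then $S^{\geo}_{\cP}\subseteq(S\setminus U_1)\cup\bigcup_{\rho}W_{\rho}$ is not Zariski-dense. I expect the main obstacle to be the uniform-in-$s$ control in the second step: one must know that essentially the whole numerical shadow of $G_{\bar s}\subseteq\SGL(V)$ — not merely $\dim G_{\bar s}$ but all the relevant $\Hom$-dimensions between bounded convolution powers — is generically locally constant in $s$, which is where the $f$-ULA hypothesis, the behaviour of vanishing cycles under convolution, and the bookkeeping with negligible objects all have to be used carefully.
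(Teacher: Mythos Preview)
The paper does not prove Fact \ref{Fact:KramerDiss}; it merely cites \cite[Prop.~3.20]{KramerDiss} and \cite[Prop.~7.4]{kramer15tannaka}. Your overall strategy---detecting $G_{\bar s}\subsetneq G$ by a jump in $\dim (T^{m,n}V)^{G_{\bar s}}$, computing these invariant dimensions as fiber dimensions of ULA complexes, and using constructibility/semicontinuity to get proper closed jumping loci $S_{m,n}$---is correct and is exactly the mechanism behind Corollary \ref{Cor:MC2Bis} in the paper (which, however, only yields a \emph{countable} union $\bigcup_{m,n}S_{m,n}$).

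The genuine gap is the passage from countably many to finitely many $(m,n)$. You assert a ``regularity bound depending only on $G$'' for the Hilbert function $h_{\bar s}$, but give no reason why the Castelnuovo--Mumford regularity of $G_{\bar s}\subset\SGL(V)$ should be controlled by $G$ rather than by $G_{\bar s}$. The actual input here, which is the heart of Kr\"amer's argument and is precisely where the uniform bound on the order of $\det(\cP|_{\cX_{\bar s}})$ is used, is a finiteness statement: reductive subgroups of $\SGL(V)$ acting irreducibly with determinant of order dividing a fixed $N$ fall into finitely many $\SGL(V)$-conjugacy classes. Once this is in hand, for each of the finitely many classes $[H]\neq[G]$ one picks a single $(m_H,n_H)$ with $\dim(T^{m_H,n_H}V)^H>\dim(T^{m_H,n_H}V)^G$ (Deligne's lemma \cite[Prop.~3.1(c)]{DeligneHC}), and $S^{\geo}_\cP\subset\bigcup_H S_{m_H,n_H}$ is a finite union of proper closed subsets. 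Your two-step decomposition (first $\dim$, then $\pi_0$) is not needed and in fact hides this point.

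There is also an error in your step 3: from $G^\circ\subseteq G_{\bar s}\subsetneq G$ you conclude a quotient $G\twoheadrightarrow Q_s$ with kernel $G_{\bar s}$, but $G_{\bar s}$ need not be normal in $G$ (a subgroup containing the identity component corresponds to an arbitrary subgroup of $\pi_0(G)$). This is fixable---use instead the permutation representation on $G/G_{\bar s}$, or simply fold step 3 into the finiteness argument above---but as written it is incorrect.
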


\noindent Our main result is about the simplicity assumption.\\

\begin{theoreme}\label{MT}   Let $f:\cX\rightarrow S$ be a morphism, separated and of finite type. For every  $\cP\in \Perv^{\ULA}(\cX/S)$, after possibly replacing $S$ by a non-empty open subscheme (depending on $\cP$) the following holds. For every $s\in S$, 
 $$\hbox{\rm length}_{\Perv(\cX_{\bar \eta})}(\cP|_{\cX_{\bar \eta}})=\hbox{\rm length}_{\Perv(\cX_{\bar s})}(\cP|_{\cX_{\bar s}}).$$
In particular, if $\cP|_{\cX_{\bar \eta}}$ is simple (resp. semisimple)  in $\Perv(\cX_{\bar \eta})$ then $\cP|_{\cX_{\bar s}}$ is simple (resp. semisimple) in $\Perv(\cX_{\bar s})$. 
 \end{theoreme}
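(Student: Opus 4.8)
The plan is to reduce the length equality, via the spreading-out formalism for $f$-ULA relative perverse sheaves, to a statement about irreducibility of a lisse sheaf in a smooth family, which one then settles with the homotopy exact sequence. Two reductions will be used throughout. First, $S$ may be replaced by any non-empty open subscheme: the generic point lies in all of them, so the value at $\bar\eta$ of $\phi(s):=\hbox{\rm length}_{\Perv(\cX_{\bar s})}(\cP|_{\cX_{\bar s}})$ is unaffected, and it is enough to make $\phi$ constant on a dense open. Second, $S$ may be replaced by a connected finite \'etale cover $\pi:S'\to S$, with $\cX$ and $\cP$ pulled back (and $k$ by the algebraic closure of $k$ in $k(S')$, so that $S'$ is again smooth, geometrically connected): the geometric fibres of $\cX'/S'$ are those of $\cX/S$, hence $\phi\circ\pi=\phi'$; since $\pi$ is open and surjective, ``$\phi'$ constant on a dense open of $S'$'' gives ``$\phi$ constant on a dense open of $S$'' with the same generic value.

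The first step would be to reduce to the case where the Jordan--H\"older factors of $\cP|_{\cX_\eta}$ are geometrically simple. A Jordan--H\"older filtration of $\cP|_{\cX_{\bar\eta}}$ in $\Perv(\cX_{\bar\eta})$, together with all its terms, is already defined over a finite subextension of $\overline{k(S)}/k(S)$; spreading it out to a connected finite \'etale cover of an open of $S$ and applying the second reduction, I may assume $\cP|_{\cX_\eta}$ carries a finite filtration with graded pieces $\cS_1,\dots,\cS_m$ simple and remaining simple on $\cX_{\bar\eta}$, where $m=\hbox{\rm length}_{\Perv(\cX_{\bar\eta})}(\cP|_{\cX_{\bar\eta}})$. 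By passage to the limit for $\Perv^{\ULA}(\cX/S)$, after shrinking $S$ this filtration spreads to $0=\cQ_0\subset\cdots\subset\cQ_m=\cP$ in $\Perv^{\ULA}(\cX/S)$ with $\widetilde\cS_i:=\cQ_i/\cQ_{i-1}\in\Perv^{\ULA}(\cX/S)$ restricting to $\cS_i$ over $\eta$. Since the restriction functor $\Perv^{\ULA}(\cX/S)\to\Perv(\cX_{\bar s})$ is exact, a short exact sequence of $f$-ULA relative perverse sheaves restricts to one on every geometric fibre, so
$$\hbox{\rm length}_{\Perv(\cX_{\bar s})}(\cP|_{\cX_{\bar s}})=\sum_{i=1}^m\hbox{\rm length}_{\Perv(\cX_{\bar s})}(\widetilde\cS_i|_{\cX_{\bar s}}),$$
and the same over $\bar\eta$, where each summand is $1$. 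It thus suffices to show: if $\cS\in\Perv^{\ULA}(\cX/S)$ has $\cS|_{\cX_{\bar\eta}}$ simple, then after shrinking $S$ one has $\cS|_{\cX_{\bar s}}$ simple for all $s\in S$. (When $\cP|_{\cX_{\bar\eta}}$ is semisimple one argues the same way, spreading out a direct sum decomposition rather than a filtration.)

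For this claim, $\cS|_{\cX_\eta}$ is geometrically simple, in particular simple over $k(S)$, hence equal to $j^\eta_{!*}(\cL^\eta[d])$ for a geometrically connected smooth locally closed $V^\eta\subset\cX_\eta$ of pure dimension $d$ and a geometrically irreducible lisse $\overline{\Q}_\ell$-sheaf $\cL^\eta$ on $V^\eta$. Spreading $(V^\eta,\cL^\eta)$ out and shrinking $S$, I get a locally closed $j:\cV\hookrightarrow\cX$ with $\cV$ smooth over $S$ of relative dimension $d$ with geometrically connected fibres and a lisse sheaf $\cL$ on $\cV$ extending $\cL^\eta$; both $\cS$ and $j_{!*}(\cL[d])$ are $f$-ULA relative perverse and agree over $\eta$, hence agree after further shrinking, and the relative intermediate extension being compatible with restriction to geometric fibres, $\cS|_{\cX_{\bar s}}=j_{\bar s,!*}((\cL|_{\cV_{\bar s}})[d])$. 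So $\cS|_{\cX_{\bar s}}$ is simple iff $\cL|_{\cV_{\bar s}}$ is irreducible. Finally, write $\cL$ as a representation $\rho$ of $\pi_1(\cV)$: after shrinking $S$, the homotopy exact sequence for the smooth surjective morphism $\cV\to S$ with geometrically connected fibres (characteristic $0$) gives exactness of $\pi_1(\cV_{\bar s})\to\pi_1(\cV)\to\pi_1(S)\to 1$ in the middle, so the image of $\pi_1(\cV_{\bar s})$ in $\pi_1(\cV)$ equals $\ker(\pi_1(\cV)\to\pi_1(S))$, \emph{independently of $s$}; hence the monodromy image of $\cL|_{\cV_{\bar s}}$ in $\SGL_n(\overline{\Q}_\ell)$ is one fixed subgroup, and since a subspace invariant under a subset of $\SGL_n$ is invariant under its Zariski closure, the irreducibility of $\cL|_{\cV_{\bar s}}$ does not depend on $s$. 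As $\cL|_{\cV_{\bar\eta}}$ is irreducible, so is $\cL|_{\cV_{\bar s}}$ for all $s$, which finishes the proof.

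The main obstacle is the third paragraph: it leans on the structure theory of $f$-ULA relative perverse sheaves --- that a simple object with geometrically simple generic fibre is, generically on $S$, the relative intermediate extension of a lisse sheaf, and that this operation commutes with restriction to geometric fibres --- as well as on having the relevant spreading-out and passage-to-the-limit statements for $\Perv^{\ULA}(\cX/S)$ (full faithfulness of restriction to $\eta$ in the limit, essential surjectivity, exactness of restriction to a geometric fibre) in the precise form required; and on the middle-exactness of the homotopy sequence for a not-necessarily-proper smooth morphism with geometrically connected fibres, after shrinking the base, in characteristic $0$.
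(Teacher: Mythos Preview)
Your strategy is essentially the paper's: reduce to the simple case by spreading out a Jordan--H\"older filtration, write a simple perverse sheaf as the intermediate extension of a geometrically simple local system on a smooth locally closed subvariety, spread that out, and then argue via $\pi_1$ that the local system stays simple on every geometric fibre. The paper also passes to a finite extension of $k(\eta)$ (your connected finite \'etale cover) to make the geometric simples arithmetic.

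The obstacles you flag at the end are exactly the two points where the paper does real work, and both are resolved by the \emph{same} auxiliary construction: a relative resolution of singularities. Concretely, for each simple factor $\cS=\iota_*j_{!*}\cL[d]$ supported on the closure $\cZ$ of the smooth stratum $\cU$, the paper spreads out not just $(\cU,\cL)$ but also, after shrinking $S$, a proper modification $\widetilde\cZ\to\cZ$ with $\widetilde\cZ\to S$ smooth, $\widetilde\cU\simeq\cU$, and $\widetilde\cZ\setminus\widetilde\cU$ a relative strict normal crossings divisor (Hironaka, using $\mathrm{char}(k)=0$). This compactification does two things at once. First, it makes $j_!\cL$ and $j_!\cL^\vee$ ULA over $S$ (a direct summand of $\widetilde j_!h^*\cL$, which is ULA by tame ramification along the NCD), so that the \emph{relative} intermediate extension $j_{!*/S}\cL[d]$ is defined in $\Perv^{\ULA}(\cZ/S)$ and its formation genuinely commutes with base change to geometric fibres---this is the substance behind your asserted compatibility. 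Second, it supplies the input for the $\pi_1$ comparison.

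For the $\pi_1$ step the paper does not invoke the homotopy exact sequence over $S$ for the non-proper $\cU\to S$; instead it base-changes along a witness $(S',\eta',s')\to(S,\bar\eta,s)$ with $S'$ the spectrum of an absolutely integrally closed valuation ring. Then $\pi_1(S')=1$, a section exists, and with the spread-out NCD compactification in hand SGA1~XIII~4.3--4.4 give isomorphisms $\pi_1(\cU'_{\eta'})\simeq\pi_1(\cU')\simeq\pi_1(\cU'_{s'})$, forcing $\cL$ to stay simple on the special fibre. So your argument is correct in outline; what is missing is precisely the Hironaka-based lemma that makes both the ULA property of $j_!\cL$ and the $\pi_1$ comparison go through.
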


\begin{corollaire}\label{Cor:MT} Assume furthermore $f:\cX\rightarrow S$ is an abelian scheme.  Then for every  $\cP\in \Perv^{\ULA}(\cX/S)$, after possibly replacing $S$ by a non-empty open subscheme (depending on $\cP$) the following holds.  For every $s\in S$, 
  $$\hbox{\rm length}_{P(\cX_{\bar \eta})}(\cP|_{\cX_{\bar \eta}})=\hbox{\rm length}_{P(\cX_{\bar s})}(\cP|_{\cX_{\bar s}}).$$
In particular, if $\cP|_{\cX_{\bar \eta}}$ is simple (resp. semisimple)  in $P(\cX_{\bar \eta})$ then $\cP|_{\cX_{\bar s}}$ is simple (resp. semisimple) in $P(\cX_{\bar s})$. 
\end{corollaire}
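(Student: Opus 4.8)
\noindent\textbf{Strategy for Theorem~\ref{MT}, then Corollary~\ref{Cor:MT}; reduction to a relatively simple sheaf.}
Throughout, \emph{shrinking $S$} means replacing it by a nonempty open subscheme. Since $\cP$ is $f$-ULA, so is $\cP|_{\cX_U}$ for every open $U\subseteq S$, and $(-)|_{\cX_\eta}\colon\Perv^{\ULA}(\cX/S)\to\Perv(\cX_\eta)$ is exact and faithful (an $f$-ULA relative perverse sheaf with vanishing restriction to $\cX_\eta$ is zero). As $\Perv(\cX_\eta)$ has finite length, a standard spreading-out argument — using the results quoted in the footnote to achieve $f$-ULA — shows that, after shrinking $S$, the Jordan--H\"older filtration of $\cP|_{\cX_\eta}$ lifts to a filtration $0=\cP_0\subset\cdots\subset\cP_n=\cP$ in $\Perv^{\ULA}(\cX/S)$ whose graded pieces $\cQ_i:=\cP_i/\cP_{i-1}$ are $f$-ULA, simple in $\Perv^{\ULA}(\cX/S)$, and have simple restriction $\cQ_i|_{\cX_\eta}\in\Perv(\cX_\eta)$; after further shrinking we may assume each $\cQ_i|_{\cX_{\bar s}}$ is nonzero for all $s\in S$. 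Since $(-)|_{\cX_{\bar s}}\colon\Perv^{\ULA}(\cX/S)\to\Perv(\cX_{\bar s})$ is exact (restriction of an $f$-ULA object to a geometric fibre is $t$-exact for the relative perverse $t$-structure),
\[\mathrm{length}_{\Perv(\cX_{\bar s})}(\cP|_{\cX_{\bar s}})=\sum_{i}\mathrm{length}_{\Perv(\cX_{\bar s})}(\cQ_i|_{\cX_{\bar s}}),\]
and likewise over $\bar\eta$; so it suffices to show that for a simple $\cQ\in\Perv^{\ULA}(\cX/S)$ the integer $\mathrm{length}_{\Perv(\cX_{\bar s})}(\cQ|_{\cX_{\bar s}})$ is independent of $s\in S$ (note $\eta\in S$). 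For the semisimple assertion one observes in addition that if $\cP|_{\cX_{\bar\eta}}$ is semisimple then so is $\cP|_{\cX_\eta}$, hence the above filtration splits after shrinking $S$, so $\cP|_{\cX_{\bar s}}=\bigoplus_i\cQ_i|_{\cX_{\bar s}}$; it then remains to see each summand is semisimple, which will be clear below.

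\noindent\textbf{Structure of a relatively simple sheaf and a monodromy computation.}
Since $\cQ|_{\cX_\eta}$ is simple over $k(\eta)$, it is the intermediate extension of an irreducible lisse $\overline{\Q}_\ell$-sheaf from a smooth locally closed subscheme of $\cX_\eta$. Spreading this out, using generic smoothness (here $\mathrm{char}(k)=0$) and the good behaviour of the relative perverse $t$-structure and of $f$-ULA under base change, we may assume — after shrinking $S$ — that there are a smooth locally closed relative subscheme $j\colon\cZ\hookrightarrow\cX$ over $S$ and a lisse sheaf $\cL$ on $\cZ$, relatively irreducible (no $f$-ULA sub-local system), such that $\cQ=j_{!*}(\cL[\dim_S\cZ])$ relatively, that $\cZ\to S$ has geometric fibres with a fixed number $c$ of connected components transitively permuted by $\pi_1(S)$, and that forming the relative intermediate extension commutes with restriction to geometric fibres: $\cQ|_{\cX_{\bar s}}=(j_{\bar s})_{!*}(\cL|_{\cZ_{\bar s}}[\dim\cZ_{\bar s}])$. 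As $j_{!*}$ is additive and sends an irreducible lisse sheaf on a smooth irreducible variety to a simple perverse sheaf, it follows, \emph{once we know $\cL|_{\cZ_{\bar s}}$ is semisimple}, that $\mathrm{length}_{\Perv(\cX_{\bar s})}(\cQ|_{\cX_{\bar s}})=\mathrm{length}(\cL|_{\cZ_{\bar s}})$. Now the key point: after shrinking $S$ one has in characteristic $0$ a homotopy exact sequence $1\to\pi_1(\cZ_{\bar s})\to\pi_1(\cZ)\to\pi_1(S)\to1$ (over the Stein factorization of $\cZ\to S$, to account for the $c$ components), so the geometric monodromy group of the fibres, namely $N:=\ker(\pi_1(\cZ)\to\pi_1(S))\subseteq\pi_1(\cZ)$, is independent of $s$. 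Hence $\cL|_{\cZ_{\bar s}}$ is, up to isomorphism, the restriction to $N$ of the monodromy representation of $\cL$ — one fixed representation, independent of $s$ — and since $N$ is normal in $\pi_1(\cZ)$ while $\cL$ is relatively irreducible, Clifford theory shows this representation is semisimple. Therefore $\mathrm{length}(\cL|_{\cZ_{\bar s}})$, hence $\mathrm{length}_{\Perv(\cX_{\bar s})}(\cQ|_{\cX_{\bar s}})$, is independent of $s$, and $\cQ|_{\cX_{\bar s}}=j_{!*}$ of a semisimple sheaf is semisimple. This proves Theorem~\ref{MT}.

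\noindent\textbf{Deduction of Corollary~\ref{Cor:MT}.}
Now $f$ is an abelian scheme. By Theorem~\ref{MT} the length in $\Perv(\cX_{\bar s})$ is constant, and since $\Perv(\cX_{\bar s})\to P(\cX_{\bar s})$ is exact with kernel the Serre subcategory of negligible objects, $\mathrm{length}_{P(\cX_{\bar s})}(\cP|_{\cX_{\bar s}})=\sum_i\mathrm{length}_{P(\cX_{\bar s})}(\cQ_i|_{\cX_{\bar s}})$, so we may take $\cP=\cQ$ simple. Recall (\cite{kramer15tannaka} and the references therein) that a simple perverse sheaf on an abelian variety is negligible if and only if its Euler characteristic vanishes, and that the Euler characteristic of a simple perverse sheaf is $\geq0$. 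By the previous step the composition factors of $\cQ|_{\cX_{\bar s}}$ form a single orbit under the $\pi_1(S)$-action (they are the $j_{!*}$ of the mutually conjugate isotypic components of $\cL|_{\cZ_{\bar s}}$), and conjugate simple perverse sheaves have equal Euler characteristic; hence either all composition factors of $\cQ|_{\cX_{\bar s}}$ are negligible or none is, and which alternative occurs is detected by the vanishing of $\chi(\cX_{\bar s},\cQ|_{\cX_{\bar s}})$, which is independent of $s$ because $\cQ$ is $f$-ULA. Thus $\mathrm{length}_{P(\cX_{\bar s})}(\cQ|_{\cX_{\bar s}})\in\{0,\ \mathrm{length}_{\Perv(\cX_{\bar s})}(\cQ|_{\cX_{\bar s}})\}$ is independent of $s$, and summing over $i$ gives the corollary.

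\noindent\textbf{Expected main obstacle.}
The heart of the argument is the structural step: that a simple $f$-ULA relative perverse sheaf is, after shrinking $S$, the relative intermediate extension of a lisse sheaf from a relatively smooth locally closed subscheme, and that this operation commutes with passage to geometric fibres. This rests on the theory of the relative perverse $t$-structure together with the stability and base-change properties of $f$-universal local acyclicity. A second, more classical, point requiring care is the exactness on the left of the homotopy sequence for $\cZ\to S$ — equivalently, the independence of $s$ of the geometric monodromy group $N$ — which is where $\mathrm{char}(k)=0$ is essential, through the existence, after shrinking $S$, of a good relative compactification of $\cZ\to S$.
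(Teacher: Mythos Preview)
Your strategy for Theorem~\ref{MT} is essentially the paper's: filter $\cP$ by relative simples, write each as a relative intermediate extension $j_{!*}(\cL[d])$ after spreading out, and use invariance of the geometric monodromy group to see that $\cL|_{\cZ_{\bar s}}$, hence $\cQ|_{\cX_{\bar s}}$, has constant length. The paper packages the invariance step via nearby cycles over an AIC valuation ring and the SGA1 specialization isomorphisms $\pi_1(\cU'_{\eta'})\simeq\pi_1(\cU')\simeq\pi_1(\cU'_{s'})$, but this is the same content as your homotopy exact sequence. Your ``expected main obstacle'' is exactly what the paper isolates as Lemma~\ref{j!ULA}: to make $j_!\cL$ (and hence the relative $j_{!*/S}\cL[d]$) universally locally acyclic, so that its formation commutes with base change to each fibre, one first produces via Hironaka a relative smooth normal-crossings compactification of $\cZ/S$ and then invokes the ULA criterion for sheaves tamely ramified along a relative SNC divisor.

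Where your argument diverges from the paper, and where it has a genuine gap, is the deduction of Corollary~\ref{Cor:MT}. The paper, before spreading out, passes to a finite extension $K_0/k(\eta)$ so as to work with the simple factors of $\cP|_{\cX_{\bar\eta}}$, not of $\cP|_{\cX_\eta}$. Each such \emph{geometrically} simple factor then specializes (by the argument above) to a simple object of $\Perv(\cX_{\bar s})$; a simple perverse sheaf on an abelian variety is negligible precisely when its Euler characteristic vanishes, and $\chi$ is constant along $S$, so the number of non-negligible simple factors is constant and the length in $P(\cX_{\bar s})$ agrees with that in $P(\cX_{\bar\eta})$. You instead keep an arithmetically simple $\cQ$, so $\cQ|_{\cX_{\bar s}}$ may have several simple constituents, and you claim these are either all negligible or all not because they are ``$\pi_1(S)$-conjugate'' with equal Euler characteristic. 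But the Clifford conjugacy here is conjugacy of $\pi_1(\cZ_{\bar s})$-representations via an \emph{outer} automorphism coming from $\pi_1(\cZ)/\pi_1(\cZ_{\bar s})\simeq\pi_1(S)$; this does not arise from any automorphism of $\cX_{\bar s}$, and there is no a priori reason the intermediate extensions $j_{\bar s!*}\cF_i[d]$ should share the same Euler characteristic (which depends on the local monodromy of $\cF_i$ along the boundary, not merely on its rank). That step is therefore unjustified. The fix is exactly the paper's move: replace $S$ by a connected \'etale cover over which each relative simple factor becomes geometrically simple, and argue factor by factor.
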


\noindent  Theorem \ref{MT} yields the following  generalization of  Fact \ref{Fact:KramerDiss} to arbitrary semisimple perverse sheaves. \\

\begin{corollaire}\label{MC2}  Let  $f:\cX\rightarrow S$ be an abelian scheme and let  $\cP\in \Perv^{\ULA}(\cX/S)$. 
\begin{enumerate}[leftmargin=*, parsep=0cm,itemsep=0.2cm,topsep=0.2cm]
\item Assume $\cP|_{\cX_{\bar s}}$ is semisimple in $\Perv(\cX_{\bar s})$ for every $s\in S$. Then   $S_{\cP}^{\geo}$ is   a countable union of strict, Zariski-closed subvarieties of  $S$.
\item Assume $\cP|_{\cX_{\bar \eta }}$ is semisimple in $P(\cX_{\bar \eta})$. Then   $S_{\cP}^{\geo}$ is contained in a countable union of strict, Zariski-closed subvarieties of  $S$.
\end{enumerate}
\end{corollaire}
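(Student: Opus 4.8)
The plan is to reduce, via the Tannakian dictionary recalled in the introduction, to the behaviour in family of finitely many cohomological invariants, and to control these using Corollary~\ref{Cor:MT} applied to convolution constructions on $\cP$, together with the stability of $S^{\geo}_{\cP}$ under specialization. Write $G:=G(\cP|_{\cX_{\bar\eta}})$. Since $\cP|_{\cX_{\bar\eta}}$ is semisimple and the convolution of semisimple perverse sheaves on an abelian variety is semisimple, $\langle\cP\rangle\simeq\langle\cP|_{\cX_{\bar\eta}}\rangle$ is a semisimple Tannakian category and $G$ is reductive; fix a countable family $(\cP_m)_{m\geq 1}$ of objects of $\langle\cP\rangle\subseteq P^{\ULA}(\cX/S)$ running through the iterated convolutions of $\cP$, $\cP^\vee$ and their internal Homs, so that every irreducible object of $\langle\cP|_{\cX_{\bar\eta}}\rangle$ is a direct summand of some $\cP_m|_{\cX_{\bar\eta}}$. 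The elementary input is that, for reductive groups $H\subseteq G$ over $\overline{\Q}_\ell$, one has $H=G$ if and only if $\dim V^{H}=\dim V^{G}$ for all $V\in\Rep(G)$ (compare the rings of regular functions on the affine variety $G/H$); a length criterion would \emph{not} suffice, precisely because of the phenomenon $\G_m\rightsquigarrow\mu_n$ that already forces the ``uniformly bounded determinant'' hypothesis in Fact~\ref{Fact:KramerDiss}. Applying this with $H=G(\cP|_{\cX_{\bar s}})$, which is legitimate as soon as $\cP|_{\cX_{\bar s}}$ is semisimple (so that $H$ is reductive), and using the cospecialization diagram~(\ref{Diag:SpecIntro}) to embed $H$ into $G$ together with the fact that $\langle\cP|_{\cX_{\bar s}}\rangle$ is a full tensor subcategory of $P(\cX_{\bar s})$ stable under subquotients, one obtains: on the locus where $\cP|_{\cX_{\bar s}}$ is semisimple, $s\notin S^{\geo}_{\cP}$ if and only if $\varphi_m(s):=\dim\Hom_{P(\cX_{\bar s})}(\mathbf 1,\cP_m|_{\cX_{\bar s}})$ equals $c_m:=\dim\Hom_{P(\cX_{\bar\eta})}(\mathbf 1,\cP_m|_{\cX_{\bar\eta}})$ for every $m$; moreover $\varphi_m\geq c_m$ everywhere on that locus by~(\ref{Diag:SpecIntro}). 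Hence $S^{\geo}_{\cP}=\bigcup_m B_m$ on that locus, with $B_m:=\{s\mid\varphi_m(s)>c_m\}$.

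Next one shows that each $\overline{B_m}$ is a strict closed subvariety of $S$. Because $\cP_m|_{\cX_{\bar s}}$ is semisimple and $\mathbf 1=\delta_0$ is simple, $\varphi_m(s)$ is the multiplicity of $\delta_0$ in $\cP_m|_{\cX_{\bar s}}$, which is computed by $\dim\mathcal{H}^{0}(0_{\bar s}^{!}(\cP_m|_{\cX_{\bar s}}))$ for the zero-section $0_{\bar s}\colon \bar s\hookrightarrow\cX_{\bar s}$; since $\cP_m$ is $f$-ULA, formation of $0_{S}^{!}\cP_m$ along the zero-section $0_S\colon S\hookrightarrow\cX$ commutes with base change, so $\varphi_m$ is the stalk-dimension function of a constructible $\overline{\Q}_\ell$-sheaf on $S$. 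In particular $\varphi_m$ is a constructible function, hence constant on a dense open, that constant being its value $c_m$ at $\eta$; with $\varphi_m\geq c_m$ everywhere this makes $B_m$ a constructible subset of $S$ that is not dense, so $\overline{B_m}$ is a strict closed subvariety. Finally, $S^{\geo}_{\cP}$ is stable under specialization (the Tannaka group can only shrink under specialization — the content of the specialization formalism of Section~\ref{Sec:Specialization}, extended from $\eta$ to arbitrary specializations); since $\overline{B_m}$ is the set of specializations of points of the constructible set $B_m$ and $B_m\subseteq S^{\geo}_{\cP}$, we get $\overline{B_m}\subseteq S^{\geo}_{\cP}$. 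Therefore, in case (1) — where $\cP|_{\cX_{\bar s}}$ is semisimple for every $s$ — one has $S^{\geo}_{\cP}=\bigcup_m B_m=\bigcup_m\overline{B_m}$, a countable union of strict closed subvarieties. (Alternatively one can avoid invoking specialization by arguing that $\varphi_m$ is in fact upper semicontinuous, since the only simple summands of $\cP_m|_{\cX_{\bar s}}$ contributing to $\varphi_m$ are skyscrapers and $\{0\}\subset\cX_{\bar s}$ is closed, so that $B_m$ is already closed.)

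For case (2) one first uses Corollary~\ref{Cor:MT} applied to $\cP$ to find a non-empty open $U\subseteq S$ over which $\cP|_{\cX_{\bar s}}$ is semisimple in $P(\cX_{\bar s})$ for every $s$; running the previous two steps over $U$ expresses $S^{\geo}_{\cP}\cap U$ as a countable union of closed subsets of $U$, the closure in $S$ of each of which is a strict closed subvariety of $S$ contained in $S^{\geo}_{\cP}$. Hence $S^{\geo}_{\cP}\subseteq(S\smallsetminus U)\cup\bigcup_m\overline{B_m\cap U}$, a countable union of strict closed subvarieties of $S$. This is only a containment, not an equality, because over $S\smallsetminus U$ the category $\langle\cP|_{\cX_{\bar s}}\rangle$ need not be semisimple, so the Tannakian reduction of the first paragraph is unavailable there and $S^{\geo}_{\cP}\cap(S\smallsetminus U)$ cannot be controlled by the same means; this is exactly why the statement of (2) is phrased with ``contained in''.

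The step I expect to be the main obstacle is the constructibility of $\varphi_m$: one must show that $\Hom_{P(\cX_{\bar s})}(\mathbf 1,\cP_m|_{\cX_{\bar s}})$ is genuinely computed, uniformly in $s$, by the cohomology of an object of $D^{b}_{c}(S)$, compatibly with base change along the zero-section — this is where the $f$-ULA hypothesis and the techniques underlying Theorem~\ref{MT} are really used. A related technical point is that, in case (2), one only has semisimplicity in $P(\cX_{\bar s})$ and not in $\Perv(\cX_{\bar s})$, so the identification of $\Hom_{P(\cX_{\bar s})}(\mathbf 1,-)$ with $\mathcal{H}^{0}(0_{\bar s}^{!}(-))$ must be handled with care (by choosing the representatives $\cP_m$ suitably, or by working systematically modulo negligibles); in case (1), where $\Perv$-semisimplicity holds on all of $S$, this issue disappears. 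The remaining ingredients — reductivity of $G$, the invariants criterion for $H=G$, and the stability of $S^{\geo}_{\cP}$ under specialization — are either standard or immediate consequences of the formalism already set up in the introduction.
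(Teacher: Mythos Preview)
Your proposal is essentially correct and follows the same strategy as the paper: reduce via Deligne's invariants criterion for equality of reductive subgroups to controlling, for each $m,n\ge 0$, the function $s\mapsto \dim I^{m,n}(\cP|_{\cX_{\bar s}})$, show this is constructible with generic value $c_{m,n}$, and use the cospecialization formalism to conclude it is upper-semicontinuous, so that its jump locus is strict and closed.

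There are two places where the paper's execution differs from yours, both of which you correctly flag as potential obstacles. First, for the reduction in case~(2): rather than working in $P(\cX_{\bar s})$ throughout and carefully comparing $\Hom_P$ with $\Hom_{\Perv}$, the paper replaces $\cP$ by the canonical subquotient $\cP^*$ (the quotient of $\cP/\cP_\neg$ by its maximal negligible quotient), which has $\cP^*|_{\cX_{\bar\eta}}$ genuinely semisimple in $\Perv(\cX_{\bar\eta})$ and $\cP^*|_{\cX_{\bar s}}\simeq\cP|_{\cX_{\bar s}}$ in $P(\cX_{\bar s})$; combined with Theorem~\ref{MT} (not Corollary~\ref{Cor:MT}), this reduces cleanly to case~(1). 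Second, for the constructibility of $\varphi_m$: rather than invoking ULA base change for $0_S^!$, the paper constructs (Lemma~\ref{MaximalSupport}) the maximal quotient $\cP_{m,n,\{0\}}$ of $\cP_{m,n}:={}^{p/S}\SH^0(T^{m,n}(\cP))$ supported on the zero section, shows its formation commutes with restriction to fibers, and computes $\varphi_m(s)$ as $\chi(\cX_{\bar s},\cP_{m,n,\{0\}}|_{\cX_{\bar s}})$; this bypasses the question of whether $0_S^!$ has the base-change property you need. The two methods are dual in spirit (your $\Hom(\delta_0,-)$ versus the paper's $\Hom(-,\delta_0)$), and both hinge on the semisimplicity of ${}^p\SH^0(T^{m,n}(\cP))|_{\cX_{\bar s}}$ in $\Perv(\cX_{\bar s})$, which the paper records separately as Lemma~\ref{Lem:TensorSS}.
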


 \noindent   If $k$ is countable, we do not know if $S_{\cP}^{\geo}\subsetneq S$ in general though we suspect it is true. Still, combined with Fact \ref{Fact:KramerDiss} and some tannakian formalism Theorem \ref{MT} yields the following. For an algebraic group $G$ over a field $Q$, let $R(G)\subset G$ denote its solvable radical (\textit{viz} its largest connected normal solvable subgroup) and  $G\twoheadrightarrow G^{\semis}:=G/R(G)$ its maximal semisimple quotient.  
 
 \begin{corollaire}\label{Cor:MTBis} Let  $f:\cX\rightarrow S$ be an abelian scheme  and let  $\cP\in \Perv^{\ULA}(\cX/S)$. 
 
\begin{enumerate}[leftmargin=*, parsep=0cm,itemsep=0.2cm,topsep=0.2cm]
\item Assume $\cP|_{\cX_{\bar\eta}}$  is simple in $P(\cX_{\bar \eta})$ with torsion determinant. Then  $  S_{\cP}^{\geo}$ 
 is  not Zariski-dense in $S$.
 \item    Up to replacing $S$ by a non-empty open subscheme, one may assume that for   all  $s\in S$  the canonical morphism induced by cospecialization
 $$  G(\cP|_{\cX_{\bar s}} )^\circ \hookrightarrow  G(\cP|_{\cX_{\bar \eta}} )^\circ\twoheadrightarrow G(\cP|_{\cX_{\bar \eta}} )^{\circ,\semis} $$
 factors through an isogeny
  $$\xymatrix{G(\cP|_{\cX_{\bar s}} )^\circ\ar@{^{(}->}[r]\ar@{->>}[d]&G(\cP|_{\cX_{\bar \eta}} )^\circ\ar@{->>}[d]\\
G(\cP|_{\cX_{\bar s}} )^{\circ,\semis}\ar@{.>}[r] &G(\cP|_{\cX_{\bar \eta}} )^{\circ,\semis}.}$$ In particular,
\begin{enumerate}[leftmargin=*, parsep=0cm,itemsep=0.2cm,topsep=0.2cm]
\item   if $G(\cP|_{\cX_{\bar\eta}})$ is  semisimple  (\textit{e.g.} $\cP|_{\cX_{\bar \eta }}$ is simple with torsion determinant in $P(\cX_{\bar \eta})$) then $  S_{\cP}^{\geo,\circ}$  
 is  not Zariski-dense in $S$.

\item if  $G(\cP|_{\cX_{\bar\eta}})$ is reductive (\textit{viz} $\cP|_{\cX_{\bar \eta }}$ is semisimple in $P(\cX_{\bar \eta})$) then  $  S_{\cP}^{\geo,\circ,\der}$
 is  not Zariski-dense in $S$.
\end{enumerate}
\end{enumerate}
\end{corollaire}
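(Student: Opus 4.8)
The plan is to deduce everything from Theorem~\ref{MT}, Corollary~\ref{Cor:MT}, Fact~\ref{Fact:KramerDiss} and the cospecialization diagram~\eqref{Diag:SpecIntro}. Throughout I use that both assertions are insensitive to shrinking $S$ (for (1), because $S$ is irreducible, so a subset is Zariski-dense in $S$ iff it is so in any dense open) and to replacing $S$ by an irreducible finite cover (along which $\cX$, $\cP$ and the geometric-fibre groups $G(\cP|_{\cX_{\bar s}})$ are unchanged, and the preimage of a non-dense set stays non-dense). \emph{Proof of (1).} By Corollary~\ref{Cor:MT}, after shrinking $S$ the restriction $\cP|_{\cX_{\bar s}}$ is simple in $P(\cX_{\bar s})$ for all $s$. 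As $\Perv(\cX_{\bar\eta})\to P(\cX_{\bar\eta})$ is a Serre quotient by the subcategory of negligible (Euler-characteristic-zero) perverse sheaves, the simple object $[\cP|_{\cX_{\bar\eta}}]$ lifts to a simple perverse sheaf; after a finite cover of $S$ and a shrinking this lift spreads out to some $\cP_0\in\Perv^{\ULA}(\cX/S)$ with $[\cP_0|_{\cX_{\bar\eta}}]=[\cP|_{\cX_{\bar\eta}}]$, and Theorem~\ref{MT} applied to $\cP_0$ lets us shrink further so that $\cP_0|_{\cX_{\bar s}}$ is simple in $\Perv(\cX_{\bar s})$ for all $s$. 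Since Euler characteristics are locally constant in a ULA family, the negligible Jordan--H\"older constituents of $\cP|_{\cX_{\bar\eta}}$ stay negligible on every fibre, so $[\cP_0|_{\cX_{\bar s}}]=[\cP|_{\cX_{\bar s}}]$; in particular $G(\cP_0|_{\cX_{\bar s}})=G(\cP|_{\cX_{\bar s}})$ and $\det(\cP_0|_{\cX_{\bar s}})=\det(\cP|_{\cX_{\bar s}})$. The determinant character of $G(\cP|_{\cX_{\bar\eta}})$ has some finite order $n$ by hypothesis, and by~\eqref{Diag:SpecIntro} its restriction along $G(\cP|_{\cX_{\bar s}})\hookrightarrow G(\cP|_{\cX_{\bar\eta}})$ --- which is $\det(\cP|_{\cX_{\bar s}})$ --- has order dividing $n$; thus the hypotheses of Fact~\ref{Fact:KramerDiss} hold for $\cP_0$, which gives that $S^{\geo}_{\cP_0}$, the preimage of $S^{\geo}_{\cP}$ under the cover, is not Zariski-dense, hence neither is $S^{\geo}_{\cP}$.

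\emph{Proof of (2): reduction.} By Theorem~\ref{MT} (and local constancy of fibrewise Euler characteristics) we may, after shrinking $S$ and a finite cover, assume the Jordan--H\"older filtration of $\cP|_{\cX_{\bar\eta}}$ spreads out to a filtration of $\cP$ in $\Perv^{\ULA}(\cX/S)$ whose graded pieces $\cP_1,\dots,\cP_m$ are $f$-ULA, relative perverse, and simple on every geometric fibre; set $\cP^{\mathrm{ss}}:=\bigoplus_i\cP_i$, so $\cP^{\mathrm{ss}}|_{\cX_{\bar s}}$ is semisimple with class the semisimplification of $[\cP|_{\cX_{\bar s}}]$. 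On an abelian variety the convolution of semisimple perverse sheaves is semisimple (decomposition theorem for the proper addition map) and Verdier duality preserves semisimplicity, so $\langle\cP^{\mathrm{ss}}|_{\cX_{\bar s}}\rangle$ is a semisimple tannakian category; hence $H_s:=G(\cP^{\mathrm{ss}}|_{\cX_{\bar s}})$ is reductive, and since $R_u(G(\cP|_{\cX_{\bar s}}))$ acts trivially on every Jordan--H\"older constituent, $H_s$ is canonically the maximal reductive quotient $G(\cP|_{\cX_{\bar s}})/R_u$. It follows that $G(\cP|_{\cX_{\bar s}})^{\circ,\semis}=H_s^{\circ,\semis}$ compatibly with cospecialization, and a short computation with reductive groups (using that a subtorus of $H_\eta^{\circ}$ centralizing $H_\eta^{\circ,\der}$ is central, so that $R(H_s^{\circ})\subseteq R(H_\eta^{\circ})$ automatically once the derived groups agree) shows that the asserted factorization is equivalent to the single statement that, after shrinking $S$,
\[
H_s^{\circ,\der}=H_\eta^{\circ,\der}\qquad\text{as subgroups of }\GL(\omega(\cP^{\mathrm{ss}}))\text{, for all }s\in S.
\]

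\emph{Proof of (2): the rigidity step, and (a), (b).} To prove this, pass to the ``adjoint'' construction $\cE\in\langle\cP^{\mathrm{ss}}\rangle$ obtained from $\cP^{\mathrm{ss}}\otimes(\cP^{\mathrm{ss}})^\vee$ by deleting a maximal fibrewise-trivial summand: $\cE|_{\cX_{\bar s}}$ is semisimple with trivial (hence torsion) determinant, $G(\cE|_{\cX_{\bar s}})$ is the image of $H_s$ under the adjoint representation, and $G(\cE|_{\cX_{\bar s}})^{\circ,\der}=H_s^{\circ,\der}$ modulo its finite centre; so it is enough to show that, after shrinking $S$, $G(\cE|_{\cX_{\bar s}})^{\circ,\der}=G(\cE|_{\cX_{\bar\eta}})^{\circ,\der}$ for all $s$. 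Now a reductive group over an algebraically closed field of characteristic $0$ has only finitely many conjugacy classes of connected semisimple subgroups; for each of the finitely many proper such subgroups $M$ of $G(\cE|_{\cX_{\bar\eta}})^{\circ,\der}$ there is a finite collection of tensor--convolution constructions of $\cE$ witnessing the strict inclusion $M\subsetneq G(\cE|_{\cX_{\bar\eta}})^{\circ,\der}$, and Theorem~\ref{MT} together with the local constancy of Euler characteristics, applied to these finitely many constructions (so only finitely many further shrinkings of $S$ are needed), forces the locus of $s$ with $G(\cE|_{\cX_{\bar s}})^{\circ,\der}\subsetneq G(\cE|_{\cX_{\bar\eta}})^{\circ,\der}$ to be a \emph{finite} union of proper Zariski-closed subsets; removing it gives the claim. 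Finally: (a) if $G(\cP|_{\cX_{\bar\eta}})$ is semisimple then $R(H_\eta^{\circ})=1$, hence $R(H_s^{\circ})=1$, and the equality of derived groups upgrades to $G(\cP|_{\cX_{\bar s}})^{\circ}=G(\cP|_{\cX_{\bar\eta}})^{\circ}$ after shrinking, so $S^{\geo,\circ}_{\cP}$ is not Zariski-dense; (b) if $G(\cP|_{\cX_{\bar\eta}})$ is reductive then by Corollary~\ref{Cor:MT} so is $G(\cP|_{\cX_{\bar s}})=H_s$, and $H_s^{\circ,\der}=H_\eta^{\circ,\der}$ reads $G(\cP|_{\cX_{\bar s}})^{\circ,\der}=G(\cP|_{\cX_{\bar\eta}})^{\circ,\der}$ after shrinking, so $S^{\geo,\circ,\der}_{\cP}$ is not Zariski-dense.

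\emph{Main obstacle.} The tannakian bookkeeping --- reducing everything to a statement about the derived groups of the neutral components of the reductive quotients $H_s$ --- is routine. The crux is the rigidity step of the last paragraph: unlike the whole group $G(\cP|_{\cX_{\bar s}})$, whose component group or central torus may genuinely degenerate on a Zariski-dense subset, the derived group of the neutral component is pinned down by a \emph{bounded} amount of tensor-invariant data, so that its degeneracy locus is a finite --- not merely countable --- union of proper closed subsets. Making this precise (controlling invariants of the full group versus the structure of its derived group, and choosing the finite family of constructions uniformly) is the technical heart; it rests on the finiteness of conjugacy classes of connected semisimple subgroups of a reductive group combined with Theorem~\ref{MT}, and it is exactly where the semisimplicity hypothesis --- equivalently, via the adjoint construction, the torsion of the relevant determinants --- is used.
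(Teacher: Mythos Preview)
Your argument for Part~(1) is essentially the paper's: lift the simple object of $P(\cX_{\bar\eta})$ to a simple perverse sheaf (the paper does this via the $(-)^*$ construction of Section~\ref{Sec:LiftingSS}), spread out, invoke Theorem~\ref{MT} for fibrewise simplicity in $\Perv$, bound the determinant order via cospecialization, then apply Fact~\ref{Fact:KramerDiss}. Fine.

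Part~(2) has a genuine gap in the ``rigidity step''. The problem is that the only quantities you can read off from perverse-sheaf data (lengths, Euler characteristics, dimensions of $I^{m,n}$) are invariants of the \emph{full} Tannaka group $G(\cE|_{\cX_{\bar s}})$, not of its neutral derived subgroup. Your $\cE$ is obtained from $\cP^{\mathrm{ss}}*(\cP^{\mathrm{ss}})^\vee$ by deleting a trivial summand, so $G(\cE|_{\cX_{\bar s}})$ is $H_s$ modulo the scalars in $H_s$; this still has a nontrivial central torus and a possibly varying component group, so is far from semisimple. The finiteness of conjugacy classes of connected semisimple subgroups of $G(\cE|_{\cX_{\bar\eta}})^{\circ,\der}$ does not translate into a finite list of tensor constructions that detect $G(\cE|_{\cX_{\bar s}})^{\circ,\der}$, because a given tensor invariant of $M$ may or may not be an invariant of the larger group $G(\cE|_{\cX_{\bar s}})\supset M$ depending on how $M$ sits inside the torus-and-component-group part, and that part is exactly what you have no control over. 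In short, you have reduced to a statement about $(-)^{\circ,\der}$ but your detection mechanism only sees the whole group; the ``finite'' in your finite union is not justified.

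The paper's way out is different and cleaner: after arranging $G(\cP|_{\cX_{\bar\eta}})$ and $G(\cP)$ to be connected (via $[N]_*$ and a finite \'etale cover), one takes the adjoint quotient $G(\cP)^{\ad}$ and singles out the unique direct factor $\widetilde G$ onto which $G(\cP|_{\cX_{\bar\eta}})^{\semis}$ maps isogenously. Because $\widetilde G$ is \emph{adjoint}, it admits a faithful \emph{irreducible} representation, hence corresponds to a \emph{single simple} object $\cQ\in\langle\cP\rangle$ with $G(\cQ)=\widetilde G$. Now $\cQ|_{\cX_{\bar\eta}}$ is simple with torsion determinant, so Part~(1) applies directly to $\cQ$ and gives, after shrinking $S$, that $G(\cQ|_{\cX_{\bar s}})=G(\cQ|_{\cX_{\bar\eta}})$ for all $s$; a short dimension count then forces the asserted isogeny. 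The missing idea in your argument is precisely this: rather than trying to detect $(-)^{\circ,\der}$ inside a larger group via an enumeration, manufacture one simple object whose full Tannaka group already \emph{is} the semisimple quotient you care about, and then Part~(1) does all the work.
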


\noindent Here is  a sample of geometric application of Corollary \ref{Cor:MTBis} (see also Remark \ref{Rem:GeoProd}). 
 \begin{corollaire}\textit{} \hbox{\rm (Corollary \ref{Cor:GeoProduct})} Let $ \cX\rightarrow S$ be an abelian scheme of relative dimension $g\geq 3$ and $ \cY\hookrightarrow \cX $ a closed subscheme, smooth and geometrically connected  over $S$. Assume $ \cY_{\bar \eta}\hookrightarrow \cX_{\bar \eta}$ has ample normal bundle  and   trivial stabilizer.  Then the set of  all  $s\in S$ such that $\cY_{\bar s}$ is a product is Zariski-dense in $S$ (if and) only if $\cY_{\bar \eta}$ is itself a product.
 \end{corollaire}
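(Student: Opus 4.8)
The plan is to encode the geometry of the family $\cY_{\bar s}\subset\cX_{\bar s}$ in the relative perverse sheaf $\cP:=\iota_*\overline{\Q}_\ell[d]$, where $d:=\dim\cY-\dim S$ is the relative dimension of $\cY/S$, and then to reduce the statement to Corollary~\ref{Cor:MTBis}(2)(b) through Kr\"amer's tensor-theoretic description of product subvarieties of abelian varieties.

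First I would check that $\cP\in\Perv^{\ULA}(\cX/S)$: the constant sheaf on $\cY$ is $(\cY/S)$-ULA because $\cY\rightarrow S$ is smooth, and $\iota_*$ preserves both the ULA condition and relative perversity because $\iota$ is finite. For every geometric point $\bar s$ of $S$ one has $\cP|_{\cX_{\bar s}}=\delta_{\cY_{\bar s}}:=\iota_{\bar s,*}\overline{\Q}_\ell[\dim\cY_{\bar s}]$, the intersection complex of the smooth connected closed subvariety $\cY_{\bar s}$, which is therefore simple in $\Perv(\cX_{\bar s})$; it is non-negligible because $\cY_{\bar\eta}$, having ample normal bundle, has nonzero Euler characteristic, so $\cP|_{\cX_{\bar\eta}}$ is simple --- hence semisimple --- in $P(\cX_{\bar\eta})$, which makes $G(\cP|_{\cX_{\bar\eta}})$ reductive. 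The ``if'' part is then the easy one: if $\cY_{\bar\eta}$ is a product, the isogeny decomposition of $\cX_{\bar\eta}$ and the induced decomposition of $\cY_{\bar\eta}$ spread out over a non-empty open subscheme of $S$ --- possibly after a finite \'etale base change, which is harmless for the Zariski-density question --- so $\cY_{\bar s}$ is a product for all $s$ in a non-empty open subscheme, and the locus in question is Zariski-dense. For the ``only if'' part I would argue by contraposition: assuming $\cY_{\bar\eta}$ is \emph{not} a product, set $T:=\{s\in S\mid\cY_{\bar s}\ \text{is a product}\}$ and show that $T$ is not Zariski-dense in $S$.

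Since ampleness of the normal bundle, triviality of the stabilizer, and non-vanishing of the fibrewise Euler characteristic are open conditions on $S$, I may shrink $S$ (again harmless for the Zariski-density question, and compatibly with the shrinking in Corollary~\ref{Cor:MTBis}) so that for every $s\in S$ the subvariety $\cY_{\bar s}\subset\cX_{\bar s}$ is smooth and connected with ample normal bundle and trivial stabilizer, and $\delta_{\cY_{\bar s}}$ is simple and non-negligible. Because $\cP$ is $f$-ULA the cohomology of its fibres (suitably twisted) forms a local system on $S$; hence, as recalled in Section~\ref{Sec:Specialization}, the cospecialization diagram~\eqref{Diag:SpecIntro} realises $G(\cP|_{\cX_{\bar s}})^{\circ,\der}$ as a closed subgroup of $G(\cP|_{\cX_{\bar\eta}})^{\circ,\der}$ acting on one and the same underlying vector space $V:=\omega_{\bar\eta}(\cP|_{\cX_{\bar\eta}})\cong\omega_{\bar s}(\cP|_{\cX_{\bar s}})$, so that $\omega_{\bar s}(\delta_{\cY_{\bar s}})$ is just the $G(\cP|_{\cX_{\bar\eta}})^{\circ,\der}$-representation $V$ restricted along this inclusion. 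Now I would quote Kr\"amer's characterization of product subvarieties (\cite[\S3.7]{KramerDiss}): for a smooth connected closed subvariety $Y$ of an abelian variety over an algebraically closed field of characteristic $0$ with ample normal bundle and trivial stabilizer, $Y$ is a product --- of positive-dimensional subvarieties, for a decomposition of the abelian variety up to isogeny and translation --- if and only if the representation $\omega(\delta_Y)$ of $G(\delta_Y)^{\circ,\der}$ is tensor-decomposable, i.e.\ isomorphic to $V_1\otimes V_2$ with $\dim_{\overline{\Q}_\ell}V_i>1$. Applying this at $\bar\eta$, where $\cY_{\bar\eta}$ is not a product, shows that $V$ is \emph{not} tensor-decomposable as a $G(\cP|_{\cX_{\bar\eta}})^{\circ,\der}$-representation; applying it at $\bar s$ for $s\in T$ shows that $V$ \emph{is} tensor-decomposable as a $G(\cP|_{\cX_{\bar s}})^{\circ,\der}$-representation. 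Were the two groups equal these statements would contradict each other, so $G(\cP|_{\cX_{\bar s}})^{\circ,\der}\subsetneq G(\cP|_{\cX_{\bar\eta}})^{\circ,\der}$, i.e.\ $s\in S_{\cP}^{\geo,\circ,\der}$; thus $T\subseteq S_{\cP}^{\geo,\circ,\der}$, which is not Zariski-dense in $S$ by Corollary~\ref{Cor:MTBis}(2)(b) (applicable because $G(\cP|_{\cX_{\bar\eta}})$ is reductive). Hence $T$ is not Zariski-dense, which completes the contrapositive.

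I expect the main obstacle to be the input quoted from Kr\"amer in the third paragraph --- the equivalence, in exactly the form needed (at the level of the derived group of the identity component of the Tannaka group, and with the appropriate notion of ``product up to isogeny'', including twisted products), between product structures on $Y$ and tensor factorizations of $\delta_Y$. By contrast, spreading out the various open conditions, verifying that $\cP$ is $f$-ULA, and tracking the identifications of fibre functors and Tannaka groups through the cospecialization diagram are routine.
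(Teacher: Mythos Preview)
Your approach matches the paper's: set $\cP=\iota_*\overline{\Q}_\ell[d]$, shrink $S$ so that the ample-normal-bundle and trivial-stabilizer hypotheses hold fiberwise (the paper does this via Lemma~\ref{Lem:Geo}), note that $\cP|_{\cX_{\bar\eta}}$ is simple so $G(\cP|_{\cX_{\bar\eta}})$ is reductive, handle the ``if'' direction by spreading out, and deduce the ``only if'' direction from Corollary~\ref{Cor:MTBis}(2)(b) together with a Tannakian criterion for products.

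The one substantive difference is the criterion itself. The paper invokes \cite[Thm.~6.1]{JKLM}: under the standing hypotheses, $\cY_{\bar t}$ is a product if and only if $G(\cP|_{\cX_{\bar t}})^{\circ,\der}$ is \emph{not simple}. This slots directly into the degeneracy-locus argument---simple at $\bar\eta$, not simple at $\bar s$, hence a strict inclusion of derived identity components---with no need to chase tensor factorizations through the fiber functor. Your tensor-decomposability formulation, attributed to \cite[\S 3.7]{KramerDiss}, is not what that section states (it concerns variation in families rather than a product criterion), and your closing paragraph rightly flags the uncertainty about its precise form; substituting \cite[Thm.~6.1]{JKLM} removes that uncertainty and streamlines the argument.
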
 
 
 \noindent  We refer to   Section \ref{Sec:GeoApp} for more details. \\

\noindent As for $S_\cP$, at least if $k$ is arithmetically rich enough,  the non-Zariski density  of $S_{\cP}^{\geo}$  in   $S$ automatically implies that $S_\cP$ is sparse in the following  sense.  For an integer $d\geq 1$, write $$|S|^{\leq d}:=\lbrace s\in |S|\;|\; [k(s):k]\leq d\rbrace.$$

\begin{proposition}\label{Prop:Hilbert}   Let  $f:\cX\rightarrow S$ be an abelian scheme  and let  $\cP\in \Perv^{\ULA}(\cX/S)$. Assume    $S_{\cP}^{\geo}$ is  not Zariski-dense in $S$. Assume furthermore that $S$ has dimension $>0$ and that
   $k$ is Hilbertian (\textit{e.g.} finitely generated over $\Q$). Then there exists an integer $d\geq 1$ such that $ |S|^{\leq d}\setminus (S_{\cP}\cap |S|^{\leq d})$ is infinite. 
\end{proposition}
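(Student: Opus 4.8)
The plan is to reduce, by means of the cospecialization diagram \eqref{Diag:SpecIntro}, to a Hilbert‑irreducibility statement for a single lisse $\overline{\Q}_\ell$‑sheaf on $S$ — the ``zero‑section part'' of $\cP$.

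First I would note that it suffices to produce one integer $d\ge 1$ and infinitely many $s\in |S|^{\le d}$ with $s\notin S_\cP$, and that, since both $|S|^{\le d}$ and $S_\cP$ (computed with respect to the fixed generic point $\eta$) only shrink when $S$ is replaced by a dense open subscheme, we are free to shrink $S$ at will. Using the hypothesis that $S^{\geo}_{\cP}$ is not Zariski‑dense, replace $S$ by $S\setminus\overline{S^{\geo}_{\cP}}$; then $G(\cP|_{\cX_{\bar s}})=G(\cP|_{\cX_{\bar\eta}})$ for \emph{every} $s\in S$, i.e. the left‑hand vertical map of \eqref{Diag:SpecIntro} is an isomorphism for all $s$. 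Since the rows of \eqref{Diag:SpecIntro} are short exact and the middle vertical map is a closed immersion, a four‑lemma chase gives, for such $S$,
\[ s\notin S_\cP\iff \bigl(G(\langle\cP|_{\cX_s}\rangle_0)\longrightarrow G(\langle\cP|_{\cX_\eta}\rangle_0)\bigr)\ \text{is surjective}. \]

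Next I would interpret this third column as the variation of a monodromy group. Under the equivalence $-|_{\cX_\eta}\colon\langle\cP\rangle\xrightarrow{\ \sim\ }\langle\cP|_{\cX_\eta}\rangle$, any object of $\langle\cP\rangle$ whose generic fibre is supported at the origin is, after a further shrinking of $S$, of the form $0_{S*}\cV$ for a lisse $\overline{\Q}_\ell$‑sheaf $\cV:=0_S^*(-)$ on $S$ (its support is closed and meets $\cX_\eta$ in the origin, hence lies on the zero‑section over a dense open, and the $f$‑ULA condition forces $\cV$ to be lisse after shrinking). Picking finitely many generators $0_{S*}\cV_1,\dots,0_{S*}\cV_r$ of the subcategory $\langle\cP\rangle_0\subset\langle\cP\rangle$ corresponding to $\langle\cP|_{\cX_\eta}\rangle_0$, and using that $0_{S*}$ is a symmetric monoidal functor from lisse sheaves on $S$ (relative convolution of skyscrapers along the zero‑section tensors the local systems), one gets $G(\langle\cP|_{\cX_\eta}\rangle_0)\cong\Pi:=\overline{\rho(\pi_1(S,\bar\eta))}^{\mathrm{Zar}}$, where $\rho=\bigoplus_i\rho_i$ and $\rho_i\colon\pi_1(S)\to\GL(\cV_{i,\bar\eta})$. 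As $0_{S*}\cV_i|_{\cX_s}=0_{s*}(\cV_i|_s)$ by base change along a section, the image of the cospecialization functor at a point $s$ is generated by the $\cV_i|_s$, so the image of the third vertical map of \eqref{Diag:SpecIntro} is $\Pi_s:=\overline{\rho(\Gal_{k(s)})}^{\mathrm{Zar}}$ (with $\Gal_{k(s)}$ mapping to $\pi_1(S)$ through $s$). Combining with the previous step: on the shrunk $S$ one has $s\notin S_\cP\iff\Pi_s=\Pi$.

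It remains to find $d\ge1$ with $\{s\in|S|^{\le d}:\Pi_s=\Pi\}$ infinite. Since $S$ is geometrically irreducible of positive dimension over the Hilbertian field $k$, there is $d$ for which $|S|^{\le d}$ is not a thin subset of $S$, so it is enough to prove that $\{s\in|S|:\Pi_s\subsetneq\Pi\}$ is thin. If $\Pi_s\subsetneq\Pi$ then $\rho(\Gal_{k(s)})$ lies in a proper closed subgroup of $\Pi$; either it fails to surject onto the finite group $\pi_0(\Pi)$ — a thin condition by classical Hilbert irreducibility applied to the connected finite étale $S$‑cover attached to $\pi_1(S)\twoheadrightarrow\pi_0(\Pi)$ — or it surjects onto $\pi_0(\Pi)$ while the identity component drops, $\Pi_s^\circ\subsetneq\Pi^\circ$. \textbf{The main obstacle is the thinness of this last locus}, i.e. that the set of closed points at which the connected arithmetic monodromy of the lisse sheaf $\cF:=\bigoplus_i\cV_i$ drops is thin in $S$. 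Passing to the cover trivialising $\pi_0(\Pi)$ reduces this to $\Pi$ connected, where I would invoke a Hilbert‑irreducibility / uniform open image statement for $\ell$‑adic local systems (over the Hilbertian — if necessary, finitely generated — base field), ruling out the finitely many conjugacy classes of maximal connected subgroups; a point worth isolating is the central torus of $\Pi^\circ$, whose ``Tate‑twist'' directions are governed by the cyclotomic character, which has infinite image over every $k(s)$ and hence never contributes to the exceptional set, so that only ``geometric'' proper subgroups remain to be handled.
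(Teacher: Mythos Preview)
Your reduction is correct and matches the paper's: shrink $S$ so that $S_\cP^{\geo}=\emptyset$, use the exactness of \eqref{Diag:SpecIntro} to identify $S_\cP$ with the locus where $G(\langle\cP|_{\cX_s}\rangle_0)\subsetneq G(\langle\cP|_{\cX_\eta}\rangle_0)$, and then, via Lemma~\ref{Lem:SpecAr} and the identification $0_*:\Loc(S)\simeq P^\ULA(\cX/S)_0$, translate this into the condition $\Pi_s\subsetneq\Pi$ for the algebraic monodromy of a single local system $\cL$ on $S$.

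The gap is in your endgame. You split the condition $\Pi_s\subsetneq\Pi$ into a $\pi_0$ part (fine) and a neutral-component part, and for the latter you claim that the central torus of $\Pi^\circ$ is ``governed by the cyclotomic character''. This is unjustified: $\cL$ is an arbitrary object of $\langle\cP\rangle_0\simeq$ a full subcategory of $\Loc(S)$, and there is no weight or purity input forcing its abelianized monodromy to be cyclotomic. Likewise, the appeal to ``finitely many conjugacy classes of maximal connected subgroups'' and a vague ``uniform open image statement'' does not produce thinness without substantially more work, and in any case the statement only assumes $k$ Hilbertian, not finitely generated.

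The paper bypasses this entirely by working with the \emph{profinite} image $\Pi(\cL):=\rho(\pi_1(S))$ rather than its Zariski closure. Since $\Pi(\cL)$ is a compact $\ell$-adic Lie group, it is topologically finitely generated and its Frattini quotient $\Pi(\cL)/\Phi(\Pi(\cL))$ is finite. Hilbert irreducibility applied to the single finite \'etale cover of $S$ trivializing this quotient then yields an integer $d\ge1$ and infinitely many $s\in|S|^{\le d}$ with $\rho(\Gal_{k(s)})\twoheadrightarrow\Pi(\cL)/\Phi(\Pi(\cL))$, hence $\rho(\Gal_{k(s)})=\Pi(\cL)$ by the defining property of the Frattini subgroup; \textit{a fortiori} $\Pi_s=\Pi$. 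This Frattini argument (Serre, \cite[\S10.6]{LMW}) handles the torus, the semisimple part, and $\pi_0$ in one stroke, with no case analysis.
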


\noindent When $S$ is a curve, $k$ is a number field and  $G(\cP|_{\cX_{\bar k}})$ is  semisimple, the conclusion of Proposition \ref{Prop:Hilbert} can be strengthened to: for every integer $d\geq 1$ the set $  S_{\cP}\cap |S|^{\leq d}$ is finite. This applies, for instance, to the intersection complex $\iota_*\overline{\Q}_\ell[d]$ for $\iota:\cY\hookrightarrow \cX$ a closed immersion such that $\cY\rightarrow S$ is smooth, geometrically connected of relative dimension $d$ and symmetric in the sense that $[-1]^*\cY=\cY$.\\

 \noindent \textbf{Organization of the paper.} In Section \ref{Sec:Review}, we briefly review the Tannakian formalism of perverse sheaves on abelian schemes, both in the absolute and relative setting. In Section \ref{Sec:Specialization}, we elucidate the existence of the specialization diagram (\ref{Diag:SpecIntro}), giving two constructions. The proofs of  Theorem \ref{MT}, its corollaries and Proposition \ref{Prop:Hilbert} are performed in  Section \ref{Sec:Proofs}. The final Section  \ref{Sec:GeoApp} is devoted to a sample of geometric applications of Corollary \ref{Cor:MTBis}.\\
 
 \noindent\textbf{Acknowledgements.}   We thank Emiliano Ambrosi for suggesting the geometric applications analyzed in Section \ref{Sec:GeoApp}, Fran\c{c}ois Charles for pointing out the example in Remark \ref{Rem:GeoProd}, and Lie Fu for asking about 
 an analogue of the Cattani-Deligne-Kaplan theorem, which led to the statement of Corollary \ref{MC2} (1).  We are also grateful to Luc Illusie and Peter Scholze for their answers to our questions about nearby cycles and Theorem \ref{MT}  respectively. We express sincere gratitude to Beat Zurbuchen, for Remark \ref{semistable RPsi} and for pointing out a gap in an earlier version of our proof of  Theorem \ref{MTBis}; the constructive discussions with him helped repair this gap. \\

 \begin{center} \textbf{Notation and conventions}\\  \end{center}

\textit{}\\

\noindent For an additive functor $F:\cA_1\rightarrow \cA_2$ between abelian categories, we write $F:\cA_1\stackrel{\approx}{\rightarrow}\cA_2$ if it is fully faithful with image stable under subquotients and  $F:\cA_1\stackrel{\simeq}{\rightarrow}\cA_2$ if it is  an equivalence.\\

\noindent For a rigid symmetric monoidal category $(\cT,\otimes)$ with unit $\mathbb{I}$ and an object $X$ in $\cT$, let $X^\vee$ denote its dual and, for every integers $m,n\geq 0$, set $T^{m,n}(X):=X^{\otimes m}\otimes X^\vee{}^{\otimes n}$; write $T(X):=\oplus_{m,n\geq 0}T^{m,n}(X)$. If $(\cT,\otimes)$ is Tannakian,  for every integers $m,n\geq 0$, let also $I^{m,n}(X)\subset T^{m,n}(X)$ denote the sum of all subobjects of $T^{m,n}(X)$ which are isomorphic to  $\mathbb{I}$ in $\cT$ (so that $\SHom_{\cT}(\mathbb{I},I^{m,n}(X))\tilde{\rightarrow}\SHom_{\cT}(\mathbb{I},T^{m,n}(X))$). If $\cT$ is Tannakian with fiber functor $\omega:\cT\rightarrow \Vect_Q$, let $G(\cT,\omega)$ denote its Tannaka group; recall that $G(\cT,\omega)$ may depend on $\omega$ but that if $Q$ is algebraically closed then $G(\cT,\omega)$ is uniquely determined up to non-canonical isomorphism. 
For an object $X$ in $\cT$ let $\langle X\rangle\subset \cT$ denote the smallest Tannakian category containing $X$ and, given a fiber functor $\omega:\langle X\rangle\rightarrow \Vect_Q$ set  $G(X,\omega):=G(\langle X\rangle,\omega)$. \\

\noindent In the whole paper, we fix a prime $\ell$. For a scheme $S$,   let  $\Loc(S)$ denote  the category of  \'etale $\overline{\Q}_\ell$-local systems on $S$ and $D_c^b(S)$  the triangulated category of  \'etale $\overline{\Q}_\ell$-sheaves with bounded constructible cohomology on $S$. \\

\noindent A variety  over a field $K$ is a  scheme separated and of finite type over $K$.\\

\noindent When $S$ is a variety, let $\Perv(S)\subset D_c^b(S)$ denote the full subcategory of perverse sheave and, for a morphism $f:X\rightarrow S$   of varieties, write $$D_{X/S}(-):=\hbox{\rm R}\SHom(-,Rf^!\overline{\Q}_\ell): D_c^b(X)^{\op}\rightarrow D_c^b(X)$$ for the relative Verdier duality functor. When $S$ is a point, we simply set $D_X(-):=D_{X/S}(-)$.\\

\noindent For morphisms of  varieties $S_1\rightarrow S\leftarrow S_2$, one writes $$\boxtimes_S^L:D_c^b(S_1)\times D_c^b(S_2)\rightarrow D_c^b(S_1\times_S S_2),\;\; (\cK_1,\cK_2)\mapsto p_1^*\cK_1\otimes^Lp_2^*\cK_2,$$
for the outer tensor product, where $p_i:S_1\times_S S_2\rightarrow S_i$ denotes the $i$th projection, $i=1,2$. When $S=\spec(k)$, one simply writes $\boxtimes^L:=\boxtimes_S^L$.

 \section{Tannakian category of relative perverse sheaves}\label{Sec:Review}
 
 \subsection{Absolute setting}\label{Sec:KWAbsolute} Let $K$ be a field of characteristic $0$ and let   $X$ be an abelian variety over $K$ with group law $m:X\times_KX\rightarrow X$. 
 
 \subsubsection{Construction} See \cite{KramerDiss}, \cite{KW}  for details, and \cite[\S 3.1]{JKLM}  for a shorter overview. The convolution product 
 $$*:D_c^b(X)\times D_c^b(X)\rightarrow D_c^b(X),\;\; (\cK_1,\cK_2)\mapsto \cK_1*\cK_2:=Rm_*( \cK_1\boxtimes^L \cK_2)$$
 endows  $D_c^b(X)$ with the structure of a $\overline{\Q}_\ell$-linear rigid symmetric monoidal category  with duality functor 
 $$(-)^\vee: D_c^b(X)\rightarrow D_c^b(X),\;\; \cK\mapsto \cK^\vee:= [-1]^*D_X(\cK)$$
 and  unit the rank one skyscraper sheaf $\delta_0:=\iota_{0*}\overline{\Q}_\ell \in D_c^b(X)$ supported on $0$.\\ 
 
 \noindent  The full  subcategory  $\Perv(X)\subset D_c^b(X)$    is  abelian  and  stable under  Verdier duality, but not under convolution. To remedy this, one can mod out by  negligible objects. Recall that  every $\cP\in \Perv(X)$ has non negative Euler-Poincar\'e characteristic: $$\chi(X,\cP):=\sum_{i\in \Z}(-1)^i\Sdim_{\overline{\Q}_\ell}(\SH^i(X_{\bar{K}},\cP))\geq 0$$
Let   $^p\SH^n(-):D_c^b(X)\rightarrow \Perv(X)$, $n\in \Z$ denote the perverse cohomology functors and let $N(X)\subset D_c^b(X)$ denote the full subcategory of all $\cK\in   D_c^b(X)$ such that $\chi(X,{}^p\SH^n(\cK))=0$ for all $n\in \Z$; this is a null system such that the convolution bifunctor   $*:D_c^b(X)\times D_c^b(X)\rightarrow D_c^b(X)$, the dualization functor  $(-)^\vee: D_c^b(X)^{\op}\rightarrow D_c^b(X)$  and the perverse cohomology ${}^pH^0(-): D_c^b(X)\rightarrow \Perv(X)$ restrict to 
 $$*:N(X)\times D_c^b(X)\rightarrow N(X),\;\; *:D_c^b(X)\times N(X)\rightarrow N(X)$$
 $$(-)^\vee: N(X)^{\op}\rightarrow N(X)$$
 and 
 $${}^pH^0(-): N(X)\rightarrow N(X)\cap \Perv(X).$$
Consider the quotient functor 
 $$\Perv(X)\rightarrow P(X):=\Perv(X)/(N(X)\cap \Perv(X))$$ 
  so that one gets 
   $$\xymatrix{\Perv(X)\times  \Perv(X) \ar[r]^-{*} \ar[d]& D_c^b(X) \ar[r]^{{}^pH^0(-)}  & \Perv(X)\ar[d] \\
P(X)\times  P(X)\ar@{.>}[rr]_-{*} &&P(X)} $$
 \noindent The  abelian category $P(X)$ endowed with $$*:P(X)\times  P(X)\rightarrow P(X)$$
 is Tannakian with duality functor induced by
    $$\xymatrix{ \Perv(X)^{\op} \ar[r]^{(-)^\vee}\ar[d] & \Perv(X)\ar[d] \\
P(X)^{\op} \ar@{.>}[r]_{(-)^\vee} &P(X)} $$
and unit the image of $\delta_0$ in $P(X)$.

 \subsubsection{Extension of the base field}\label{Sec:BaseFieldExt} See \cite[Sec. 4]{JKLM} for details. Let  $L/K$ be a field extension and let $K^L\subset L$ denote the algebraic closure of $K$ in $L$; assume $K^L/K$ is Galois. Let  $\cT\subset P(X)$ be a full abelian $\otimes$-subcategory and let $\cT_{L}\subset P(X_L)$ denote the  full abelian $\otimes$-subcategory generated by the essential image of $\cT\hookrightarrow P(X)\stackrel{-|_{X_L}}{\rightarrow}P(X_L)$, namely the full subcategory of all $\cQ\in P(X_L)$ such that there exists $\cP\in \cT$ with $\cQ$ a subquotient of $\cP|_{X_L}$. For instance, for every $\cP\in P(X)$, $\langle \cP\rangle_L=\langle \cP|_{X_L}\rangle$. The structure of  $\cT$  is closely related to the structure of $\cT_L$ and the structure of the category $  \Rep_{\overline{\Q}_\ell}(\Gal(K^L/K))$ of finite dimensional continuous $\overline{\Q}_\ell$-representations of the Galois group $\Gal(K^L/K)$ of $K^L/K$. More precisely,
   \begin{itemize}[leftmargin=*,label=-]
   \item  The canonical functor $$-|_{X_{L}}:\Perv(X)\stackrel{|_{X_L}}{\rightarrow} \Perv(X_L)\rightarrow P(X_L)$$ is an exact   functor of $\overline{\Q}_\ell$-linear  categories which induces a faithful functor of Tannakian categories
   $$\xymatrix{\Perv(X)\ar[r]^{|_{X_L}} \ar[d]& \Perv(X_L)\ar[d]\\
  P(X)\ar@{.>}[r]^{|_{X_L}}  & P(X_L)}.$$
    
\item  For simplicity, write $\Perv(K):=\Perv(\spec(K))$. The canonical functor $$0_*: \Perv(K)\stackrel{0_*}{\rightarrow} \Perv(X)\rightarrow P(X)$$ is an exact   fully faithful functor of Tannakian categories with essential image $P_0(X)\subset P(X)$  stable under subquotients. Precomposing $0_*: \Perv(K) \rightarrow P_0(X)\hookrightarrow P(X)$ with the fully faithful exact $\otimes$-tensor functor $  \Rep_{\overline{\Q}_\ell}(\Gal(K^L/K))\hookrightarrow \Perv(K)$, one gets an exact   fully faithful functor of Tannakian categories
 $$0_*^L: \Rep_{\overline{\Q}_\ell}(\Gal(K^L/K))\rightarrow P(X);$$
let $P_0^L(X)\subset P_0(X)$ denote its essential image.\\

 \end{itemize}  
 
 \noindent Consider the full Tannakian subcategory $\cT_0^L:=\cT\cap P_0^L(X) \subset \cT$. Then, for every fiber functor $\omega: \cT_L\rightarrow \Vect_{\Q_\ell}$, the sequence of Tannakian categories $$ \cT_0^L \rightarrow \cT\stackrel{|_{X_L}}{\rightarrow} \cT_L$$
induces a short exact sequence of proalgebraic groups 
$$\xymatrix{1\ar[r]&G( \cT_L,\omega)\ar[r]&  G( \cT,\omega)\ar[r]& G( \cT_0^L ,\omega)\ar[r]& 1,}$$
from which one immediately deduces   that

  \begin{itemize}[leftmargin=*, parsep=0cm,itemsep=0.2cm,topsep=0.2cm,label=-]
   \item  (\ref{Sec:BaseFieldExt}-1) For every $\cP\in P(X)$,  the sequence of Tannakian categories $$\langle \cP\rangle_0^L \rightarrow  \langle \cP\rangle \stackrel{|_{X_L}}{\rightarrow} \langle \cP|_{X_L}\rangle$$
induces a short exact sequence of  algebraic groups 
$$\xymatrix{1\ar[r]&G(\cP|_{X_L},\omega)\ar[r]& G(\cP ,\omega)\ar[r]& G(\langle \cP\rangle_0^L,\omega)\ar[r]& 1.}$$
 
 \item   (\ref{Sec:BaseFieldExt}-2) If $K$ is algebraically closed, the  restriction functor $-|_L: \cT\stackrel{\simeq}{\rightarrow} \cT_L$ is an equivalence of Tannakian categories. In particular, for every $\cP\in P(X)$, $G(\cP|_{X_L},\omega)\tilde{\rightarrow} G(\cP ,\omega)$.
 \end{itemize} 
 
 \noindent We drop the superscript $(-)^L$ when $L=\overline{K}$.

  \subsection{Relative setting}\label{Sec:KWRel}Let $k$ be a  field of characteristic $0$, $S$  a smooth,  geometrically connected  variety over $k$  with generic point $\eta$.   Let   $f:\cX\rightarrow S$ be an abelian scheme.   \\
   \subsubsection{Construction} See  \cite{HansenScholze} for details. Let $D^{\ULA}(\cX/S)\subset D_c^b(\cX)$ denote the full subcategory of    $f$-universally locally acyclic ($f$-ULA or   just ULA for short) complexes on $\cX/S$; this is a triangulated subcategory.
   As in the absolute setting, the convolution product 
    $$*:D^\ULA(\cX/S) \times D^\ULA(\cX/S)\rightarrow D^\ULA(\cX/S),\;\; (\cK_1,\cK_2)\mapsto \cK_1*\cK_2:=Rm_*( \cK_1\boxtimes_S^L \cK_2)$$
 endows  $D^\ULA(\cX/S)$ with the structure of a $\overline{\Q}_\ell$-linear rigid symmetric monoidal category  with duality functor 
 $$(-)^\vee: D^\ULA(\cX/S)\rightarrow D^\ULA(\cX/S),\;\; K\mapsto \cK^\vee:= [-1]^*D_{\cX/S}(\cK)$$
and  unit  $\delta_{S,0}:= 0_*\overline{\Q}_\ell \in D_c^b(\cX)$, where $ 0:S\hookrightarrow \cX$ is the $0$-section. By construction and proper base change, for every $s\in S$, the pull-back functor $-|_{\cX_s}:D^\ULA(\cX/S)\rightarrow D_c^b(\cX_s)$  is a tensor functor.  \\

 \noindent     Let   $\Perv^\ULA(\cX/S)\subset D^{\ULA}(\cX/S)$ denote the full subcategory of  ULA relative perverse sheaves on $\cX$. This is an abelian category, stable by relative Verdier duality $D_{\cX/S}(-):D_c^b(\cX)^{\op}\rightarrow D_c^b(\cX)$
 and  such that for every $s\in S$, the pull-back functor $-|_{\cX_s}:D_c^b(\cX)\rightarrow D_c^b(\cX_s)$ restricts to an exact functor $-|_{\cX_s}:\Perv^\ULA(\cX/S)\rightarrow \Perv(\cX_s)$ which, when $s=\eta$, is   fully faithful with essential image stable under subquotients. Actually,  $\Perv^\ULA(\cX/S)\subset D^{\ULA}(\cX/S)$ is the heart of a $t$-structure $D^{\ULA,\leq 0}(\cX/S),D^{\ULA,\geq 0}(\cX/S)\subset D^{\ULA}(\cX/S)$  - the relative perverse $t$-structure with associated truncation functors ${}^{p/S}\tau^{\leq 0}:D^{\ULA}(\cX/S)\rightarrow D^{\ULA,\leq 0}(\cX/S)$, ${}^{p/S}\tau^{\geq 0}:D^{\ULA}(\cX/S)\rightarrow D^{\ULA,\geq 0}(\cX/S)$ and perverse cohomology functors 
 $${}^{p/S}\SH^n:D^{\ULA}(\cX/S)\rightarrow \Perv^{\ULA}(\cX/S),\;\; n\in \Z.$$ 
 
\noindent As $f:\cX\rightarrow S$  is proper, $Rf_*:D^\ULA(\cX/S)\rightarrow D_c^b(S)$ factors as $Rf_*:D^\ULA(\cX/S)\rightarrow D^\ULA(S/S)= D_{\liss}^b(S)$ \cite[Lem. (ii), p.20, Lem. (i), p.21]{Barrett}.  Combined with the fact that, for every $t\in S$, the following diagrams
$$\xymatrix{D^\ULA(\cX/S)\ar[r]^{-|_{\cX_{\bar t}}}\ar[d]_{^{p/S}\hbox{\rm \tiny H}^n(-)}&D_c^b(\cX_{\bar t})\ar[d]^{^p\hbox{\rm \tiny H}^n(-)}\\
\Perv^\ULA(\cX/S)\ar[r]_{-|_{\cX_{\bar t}}}&\Perv(\cX_{\bar t})},\;\; n\in \Z$$
commute, one gets that, for  $\cK\in  D^\ULA(\cX/S)$, the following properties are equivalent:
 
 \begin{enumerate}[label=(\roman*)]
\item  $\cK|_{\cX_\eta}\in N(\cX_\eta)$;
 \item For every $s\in S$, $\cK|_{\cX_s}\in N(\cX_s)$;
  \item There exists $s\in S$ such that $\cK|_{\cX_s}\in N(\cX_s)$.
 \end{enumerate}  
\noindent In other words, for every $s\in S$,  the following null systems of $D^{\ULA}(\cX/S)$
\begin{equation}\label{Diag:DefN}\begin{tabular}[t]{ll}
$N^{\ULA}(\cX/S)$&$:=\ker(D^{\ULA}(\cX/S)\stackrel{-|_{\cX_\eta}}{\rightarrow}D_c^b(\cX_\eta)\rightarrow D_c^b(\cX_\eta)/N(\cX_\eta))$\\
&$=\ker(D^{\ULA}(\cX/S)\stackrel{-|_{\cX_s}}{\rightarrow}D_c^b(\cX_s)\rightarrow D_c^b(\cX_s)/N(\cX_s))$.
\end{tabular}\end{equation}
coincide.\\

 \noindent By  construction  the functors 
   $*:D^{\ULA}(\cX/S)\times D^{\ULA}(\cX/S)\rightarrow D^{\ULA}(\cX/S)$,  $(-)^\vee: D^{\ULA}(\cX/S)^{\op}\rightarrow D^{\ULA}(\cX/S)$, ${}^{p/S}H^0(-): D^{\ULA}(\cX/S)\rightarrow \Perv^{\ULA}(\cX/S)$ and $-|_{\cX_s}: D^{\ULA}(\cX/S)\rightarrow D_c^b(\cX_s)$, $s\in S$ restrict to 
 $$*:N^{\ULA}(\cX/S)\times D^{\ULA}(\cX/S)\rightarrow N^{\ULA}(\cX/S),\;\; *:D^{\ULA}(\cX/S)\times N^{\ULA}(\cX/S)\rightarrow N^{\ULA}(\cX/S)$$
 $$(-)^\vee:N^{\ULA}(\cX/S)^{\op}\rightarrow N^{\ULA}(\cX/S)$$
 $${}^{p/S}H^0(-): N^{\ULA}(\cX/S)\rightarrow N^{\ULA}(\cX/S)\cap \Perv^{\ULA}(\cX/S)$$
 and 
 $$-|_{\cX_s}: N^{\ULA}(\cX/S)\rightarrow N(\cX_s), \;\; s\in S.$$
Consider the quotient functor 
 $$ \Perv^{\ULA}(\cX/S)\rightarrow P^{\ULA}(\cX/S):=\Perv^{\ULA}(\cX/S)/(\Perv^{\ULA}(\cX/S)\cap N^{\ULA}(\cX/S))$$ 
  so that one gets 
$$\xymatrix{  \Perv^{\ULA}(\cX/S) \times   \Perv^{\ULA}(\cX/S) \ar[r]^-{*} \ar[d]& D^{\ULA}(\cX/S) \ar[r]^{{}^{p/S}H^0(-)} \ar[r] & \Perv^{\ULA}(\cX/S)\ar[d] \\
P^{\ULA}(\cX/S)\times  P^{\ULA}(\cX/S)\ar@{.>}[rr]_-{*} &&P^{\ULA}(\cX/S)} $$
 \noindent The  abelian category $P^{\ULA}(\cX/S)$ endowed with $$*:P^{\ULA}(\cX/S)\times  P^{\ULA}(\cX/S)\rightarrow P^{\ULA}(\cX/S)$$
 is a $\overline{\Q}_\ell$-linear rigid symmetric monoidal category with duality functor induced by
    $$\xymatrix{ \Perv^{\ULA}(\cX/S)^{\op} \ar[r]^{(-)^\vee}\ar[d] & \Perv^{\ULA}(\cX/S)\ar[d] \\
P^{\ULA}(\cX/S)^{\op}  \ar@{.>}[r]_{(-)^\vee} &P^{\ULA}(\cX/S)} $$
and unit the image of $\delta_{S,0} $ in $P^\ULA(\cX/S)$. For every $s\in S$, the exact pull-back functor $-|_{X_s}:\Perv^{\ULA}(\cX/S)\rightarrow \Perv (\cX_s)$ induces  an    exact  faithful functor of $\overline{\Q}_\ell$-linear rigid symmetric monoidal categories
 \begin{equation}\label{Eq:1}\xymatrix{ \Perv^{\ULA}(\cX/S)  \ar[r]^{-|_{\cX_s}}\ar[d] & \Perv(\cX_s)\ar[d] \\
P^{\ULA}(\cX/S) \ar@{.>}[r]_{ -|_{\cX_s}} &P(\cX_s),} \end{equation}
which, when $s=\eta$, is  fully faithful with essential image stable under subquotients. In particular, $-|_{\cX_\eta}:P^{\ULA}(\cX/S)\rightarrow P(\cX_\eta)$ identifies $P^{\ULA}(\cX/S)$ with a full Tannakian subcategory of $ P(\cX_\eta)$ and for every $\cP\in P^{\ULA}(\cX/S) $, induces an equivalence of Tannakian categories
$$-|_{\cX_\eta}:\langle \cP\rangle \stackrel{\simeq}{\rightarrow} \langle \cP|_{\cX_\eta}\rangle.$$

 \section{Specialization}\label{Sec:Specialization} 
\noindent   In the following, to simplify notation, given an exact tensor functor of Tannakian categories $F:\cT'\rightarrow \cT$ and a fiber functor $\omega$ on $\cT$, we will again write $\omega:=\omega\circ F$ for the resulting fiber functor on $\cT'$; as the functor $F:\cT'\rightarrow \cT$ should always be clear from the context, this should not give rise to confusion.\\
 
\noindent  Let $k$ be a  field of characteristic $0$, let $S$  a smooth,  geometrically connected  variety over $k$  with generic point $\eta$.   Let   $f:\cX\rightarrow S$ be an abelian scheme. From (\ref{Diag:DefN}), the  canonical diagram of $\overline{\Q}_\ell$-linear abelian categories  (\ref{Diag:Tannaka1}-1) induces the canonical diagram of Tannakian categories  (\ref{Diag:Tannaka1}-2)

 \begin{equation}\label{Diag:Tannaka1}
 	\resizebox{1.2\textwidth}{!}{%
 		\begin{tabular}[t]{llll}
 (\ref{Diag:Tannaka1}-1)&
 $$\xymatrix{  \Perv(\cX_s)_0\ar[r]&\Perv(\cX_s)\ar[r]^{|_{\cX_{\bar s}}}&\Perv(\cX_{\bar s})\\
 &\Perv^{\ULA}(\cX/S)\ar[u]^{|_{\cX_s}}\ar[d]_{|_{\cX_\eta}}^\approx &\\
 \Perv(\cX_\eta)_0\ar[r]& \Perv(\cX_\eta) \ar[r]^{|_{\cX_{\bar \eta}}}& \Perv(\cX_{\bar \eta})\\ }$$& (\ref{Diag:Tannaka1}-2)& $$\xymatrix{ P(\cX_s)_0\ar[r]&P(\cX_s)\ar[r]^{|_{\cX_{\bar s}}}&P(\cX_{\bar s})\\
 &P^{\ULA}(\cX/S)\ar[u]^{|_{\cX_s}}\ar[d]_{|_{\cX_\eta}}^\approx &\\
P(\cX_\eta)_0\ar[r]&P(\cX_\eta) \ar[r]^{|_{\cX_{\bar \eta}}}& P(\cX_{\bar \eta})\\ }$$
\end{tabular}%
}
\end{equation}
and  for every    $\cP\in P^\ULA(\cX/S)$,  
\begin{equation}\label{Diag:Tannaka}\xymatrix{\langle \cP|_{\cX_s}\rangle_0\ar[r]&\langle \cP|_{\cX_s}\rangle\ar[r]^{|_{\cX_{\bar s}}}&\langle \cP|_{\cX_{\bar s}}\rangle\\
 &\langle \cP\rangle\ar[u]^{|_{\cX_s}}\ar[d]_{|_{\cX_\eta}}^{\simeq }&\\
\langle \cP|_{\cX_\eta}\rangle_0\ar[r]&\langle \cP|_{\cX_{\eta }}\rangle \ar[r]^{|_{\cX_{\bar \eta}}}&\langle \cP|_{\cX_{\bar \eta}}\rangle.\\ }\end{equation}
For every $t\in S$, fix  a fiber functor $\omega_{\bar t}:\langle \cP|_{\cX_{\bar t}}\rangle\rightarrow \Vect_{\overline{\Q}_\ell}$.  
   We claim that   for every $s\in S$ and choice of an isomorphism of fiber functors $$\omega_{\bar s}\circ -|_{\cX_{\bar s}}\tilde{\rightarrow} \omega_{\bar \eta}\circ -|_{\cX_{\bar \eta}}: P^\ULA(\cX/S)\rightarrow \Vect_{\overline{\Q}_\ell},$$ 
the diagram (\ref{Diag:Tannaka})  induces a diagram of algebraic groups with exact lines, which can be completed as indicated by the dotted arrows
\begin{equation}\label{Diag:Rep}\xymatrix{1\ar[r]& G(\cP|_{\cX_{\bar s}},\omega_{\bar s})\ar[r]\ar@{_{(}.>}[ddd]^{ csp_{\bar \eta,\bar s}}&G(\cP|_{\cX_s},\omega_{\bar s})\ar[r]\ar@{_{(}->}[d]&G( \langle\cP|_{\cX_s}\rangle_0,\omega_{\bar s})\ar[r]\ar@{.>}[ddd]^{  (csp_{\bar \eta,\bar s})_0}&1\\
 && G(\cP,\omega_{\bar s})\ar[d]^\simeq&&\\
 && G(\cP,\omega_{\bar \eta})&&\\
1\ar[r]& G(\cP|_{\cX_{\bar \eta}},\omega_{\bar \eta})\ar[r]&G(\cP|_{\cX_\eta}, \omega_{\bar \eta})\ar[r]\ar[u]_\simeq&G( \langle\cP|_{\cX_\eta}\rangle_0,\omega_{\bar \eta})\ar[r]&1\\}\end{equation}
The exactness of the lines is (\ref{Sec:BaseFieldExt}-1). The fact that $  G(\cP|_{\cX_s} ,\omega_{\bar s})\hookrightarrow G(\cP,\omega_{\bar s})$ is a  closed immersion is formal\footnote{Indeed, by definition of $\langle \cP|_{\cX_s}\rangle$, every object  in $\langle \cP|_{\cX_s}\rangle$ is a subquotient of some $T^{m,n}(\cP|_{\cX_s})\simeq T^{m,n}(\cP)|_{\cX_s}$ for some integers $m,n\geq 0$.}. \\

\noindent Note that the existence of the dotted arrows is independent of the choice of the isomorphism  $\omega_{\bar s}\circ -|_{\cX_{\bar s}}\tilde{\rightarrow} \omega_{\bar \eta}\circ -|_{\cX_{\bar \eta}}$  hence of the fiber functors $\omega_{\bar \eta}$, $\omega_{\bar s}$. So we will be free to choose  $\omega_{\bar \eta}$, $\omega_{\bar s}$.   Note also that the existence of the arrow  $csp_{\bar \eta,\bar s}$ is equivalent to the one of the arrow  $(csp_{\bar \eta,\bar s})_0$. We provide two constructions of (\ref{Diag:Rep}), one  \textit{via} a construction of  $csp_{\bar \eta,\bar s}$ and one \textit{via} a construction of  $(csp_{\bar \eta,\bar s})_0$. In both cases, we actually complete  (\ref{Diag:Tannaka1}-1),  (\ref{Diag:Tannaka1}-2) (\ref{Diag:Tannaka}) by introducing some intermediate categories (to be defined) - $(*)_{\bar \eta,\bar s}$ to construct $csp_{\bar \eta,\bar s}$ and $((*)_{\bar \eta,\bar s})_0$ to construct $(csp_{\bar \eta,\bar s})_0$ as indicated in  (\ref{Diag:Tannaka1Comp}-1),  (\ref{Diag:Tannaka1Comp}-2) and (\ref{Diag:TannakaComp}) below.

 \begin{equation}\label{Diag:Tannaka1Comp}
 		\resizebox{1.2\textwidth}{!}{%
 	\begin{tabular}[t]{llll}
 (\ref{Diag:Tannaka1Comp}-1)&
 $$\xymatrix{  \Perv(\cX_s)_0\ar[r]&\Perv(\cX_s)\ar[r]^{|_{\cX_{\bar s}}}&\Perv(\cX_{\bar s})\\
 ((*)_{\bar \eta,\bar s})_0\ar@{.>}[u]\ar@{.>}[d]^\simeq\ar@{.>}[r]  &\Perv^{\ULA}(\cX/S)\ar[u]^{|_{\cX_s}}\ar[d]_{|_{\cX_\eta}}^\approx\ar@{.>}[r] &(*)_{\bar \eta,\bar s}\ar@{.>}[u]\ar@{.>}[d]^\simeq\\
 \Perv(\cX_\eta)_0\ar[r]& \Perv(\cX_\eta) \ar[r]^{|_{\cX_{\bar \eta}}}& \Perv(\cX_{\bar \eta})\\ }$$& (\ref{Diag:Tannaka1Comp}-2)& $$\xymatrix{ P(\cX_s)_0\ar[r]&P(\cX_s)\ar[r]^{|_{\cX_{\bar s}}}&P(\cX_{\bar s})\\
  ((*)_{\bar \eta,\bar s})_0\ar@{.>}[u]\ar@{.>}[d]^\simeq\ar@{.>}[r] &P^{\ULA}(\cX/S)\ar[u]^{|_{\cX_s}}\ar[d]_{|_{\cX_\eta}}^\approx\ar@{.>}[r] &  (*)_{\bar \eta,\bar s}\ar@{.>}[u]\ar@{.>}[d]^\simeq\\
P(\cX_\eta)_0\ar[r]&P(\cX_\eta) \ar[r]^{|_{\cX_{\bar \eta}}}& P(\cX_{\bar \eta})\\ }$$
\end{tabular}%
}\end{equation}

\begin{equation}\label{Diag:TannakaComp} \xymatrix{\langle \cP|_{\cX_s}\rangle_0\ar[r]&\langle \cP|_{\cX_s}\rangle\ar[r]^{|_{\cX_{\bar s}}}&\langle \cP|_{\cX_{\bar s}}\rangle\\
((*)_{\bar \eta,\bar s})_0\ar@{.>}[u]\ar@{.>}[d]^\simeq\ar@{.>}[r] &\langle \cP\rangle\ar[u]^{|_{\cX_s}}\ar[d]_{|_{\cX_\eta}}^\simeq\ar@{.>}[r] &(*)_{\bar \eta,\bar s}\ar@{.>}[u]\ar@{.>}[d]^\simeq\\
\langle \cP|_{\cX_\eta}\rangle_0\ar[r]&\langle \cP|_{\cX_{\eta }}\rangle \ar[r]^{|_{\cX_{\bar \eta}}}&\langle \cP|_{\cX_{\bar \eta}}\rangle\\ }\end{equation}

\begin{remarque}\label{Rem:Spec}\textnormal{Actually, the constructions of  (\ref{Diag:Tannaka1Comp}-1)  do not use that $f:\cX\rightarrow S$ is an abelian scheme; they only require  that  $f:\cX\rightarrow S$ be separated and of finite type (with a section). }
\end{remarque}
 \subsection{A construction of $(csp_{\bar \eta,\bar s})_0$}\label{Sec:Construction1}
 
\noindent We begin with the following observation. 

 \begin{lemme}\label{Lem:SpecAr}  Let    $f:\cX\rightarrow S$ be a separated morphism of finite type  with a section $\iota:S\rightarrow \cX$. The following commutative diagram 
$$\xymatrix{\Loc(S)=\Perv^{\ULA}(S/S)\ar[r]^(0.6){\iota_*}\ar[d]_{\eta^*}&\Perv^{\ULA}(\cX/S)\ar[d]^{|_{\cX_\eta}}\\
\Perv(\eta)\ar[r]^{\iota_{\eta *}}&\Perv(\cX_\eta)}$$
is cartesian. Namely for every $\cL_{[\eta]}\in \Perv(\eta)$, if there exists $\cP\in \Perv^\ULA(\cX/S)$ such that $\iota_{\eta *}\cL_{[\eta]}\simeq \cP|_{\cX_\eta}$ then there exists $\cL\in \Loc(S)$ such that $\eta^*\cL\simeq \cL_{[\eta]}$ and $\iota_*\cL\simeq\cP$.
 \end{lemme}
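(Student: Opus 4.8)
The plan is to prove the Lemma by a descent-plus-rigidity argument. First I would observe that we may work over a strict henselization or, more concretely, argue locally on $S$; since $S$ is smooth over $k$ and in particular normal and connected, $\eta^*:\Loc(S)\to\Loc(\eta)$ is fully faithful onto the full subcategory of local systems with ``unramified'' monodromy, and the real content is to produce an honest local system on $S$ from the data on the generic fiber. Concretely, given $\cL_{[\eta]}\in\Perv(\eta)$ with $\iota_{\eta*}\cL_{[\eta]}\simeq\cP|_{\cX_\eta}$, note first that $\cL_{[\eta]}$ is automatically a shifted local system on $\eta$ (it is the pullback along the section $\iota_\eta$ of a perverse sheaf whose restriction to the zero-dimensional scheme $\eta$ lands in degree $0$, up to the normalization convention), so write $\cL_{[\eta]}=V[0]$ for a continuous $\overline{\Q}_\ell$-representation $V$ of $\Gal(\overline{\eta}/\eta)$. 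The goal is then to show $V$ extends to a local system $\cL$ on $S$, i.e. that the inertia at every codimension-one point of $S$ acts trivially, and that $\iota_*\cL\simeq\cP$.

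The key steps, in order: \textbf{(1)} Use that $\cP$ is $f$-ULA and relative perverse to control $\cP$ near the section $\iota$. The section $\iota:S\hookrightarrow\cX$ is a closed immersion, and I would pull $\cP$ back along $\iota$ (or take $\iota^!$) to get a complex on $S$; the ULA hypothesis guarantees that formation of $\iota^*\cP$, $\iota^!\cP$ commutes with all base change $S'\to S$, in particular with $\eta\hookrightarrow S$ and with each $\bar s\hookrightarrow S$. \textbf{(2)} Apply the semicontinuity/constructibility consequence of ULA: $\iota^*\cP\in D^{\ULA}(S/S)=D^b_{\liss}(S)$, so its cohomology sheaves are local systems on $S$; restricting to $\eta$ recovers (the relevant cohomology sheaf of) $V$. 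This already produces a candidate $\cL\in\Loc(S)$ with $\eta^*\cL\simeq\cL_{[\eta]}$. \textbf{(3)} Prove $\iota_*\cL\simeq\cP$: the adjunction gives a canonical map, whose cone $\cC$ is $f$-ULA, relative perverse (after suitable shift-truncation), and vanishes after restriction to $\cX_\eta$; but by (\ref{Diag:DefN}) and the fact that $-|_{\cX_\eta}:\Perv^{\ULA}(\cX/S)\to\Perv(\cX_\eta)$ is faithful (indeed fully faithful with image stable under subquotients), a relative perverse sheaf vanishing on the generic fiber is $0$, so $\cC=0$ on a neighborhood of $\eta$, hence — after shrinking $S$, which is harmless — everywhere, giving $\iota_*\cL\simeq\cP$ exactly. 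Alternatively, avoid shrinking by noting the cartesianness is a local statement and $\iota_*$ is fully faithful, so it suffices to check $\iota_*\cL$ and $\cP$ agree generically and both lie in $\Perv^{\ULA}(\cX/S)$, then invoke that such objects are determined by their generic fiber via the equivalence $-|_{\cX_\eta}:\langle\cdot\rangle\xrightarrow{\simeq}\langle\cdot|_{\cX_\eta}\rangle$ — but more carefully one uses that $\Perv^{\ULA}(\cX/S)\hookrightarrow\Perv(\cX_\eta)$ is fully faithful so $\Hom(\iota_*\cL,\cP)\to\Hom(\iota_{\eta*}\cL_{[\eta]},\cP|_{\cX_\eta})$ is bijective and one lifts the given isomorphism.

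The main obstacle I expect is \textbf{step (2)}, namely verifying that $\iota^*\cP$ (or the correct combination of $\iota^*$ and a shift) really lands in $D^b_{\liss}(S)$ and that its generic stalk is $\cL_{[\eta]}$ rather than some twist or truncation thereof — this requires being careful with the relative perverse normalization (the shift by the fiber dimension is $0$ here since $\cX\to S$ has a section through which we restrict, but one must check the perversity bookkeeping), and with the precise form of the ULA base-change theorem of Hansen--Scholze one is invoking. A secondary subtlety is ensuring no shrinking of $S$ is needed for the \emph{cartesian} (as opposed to merely ``essentially surjective after shrinking'') conclusion; here the cleanest route is the full faithfulness of $\iota_*$ combined with full faithfulness of $-|_{\cX_\eta}$ on $\Perv^{\ULA}$, which together upgrade ``isomorphic generic fibers'' to ``isomorphic objects'' without any open restriction.
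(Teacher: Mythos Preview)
Your overall architecture matches the paper's: define $\cL:=\iota^*\cP$, show it is a local system with $\eta^*\cL\simeq\cL_{[\eta]}$, and then use full faithfulness of $-|_{\cX_\eta}$ on $\Perv^{\ULA}(\cX/S)$ to upgrade the generic isomorphism $\iota_{\eta*}\cL_{[\eta]}\simeq\cP|_{\cX_\eta}$ to $\iota_*\cL\simeq\cP$. Step (3) as you state it at the end is exactly right.

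The gap is in step (2). You assert that $\iota^*\cP\in D^{\ULA}(S/S)=D^b_{\liss}(S)$ as a ``semicontinuity/constructibility consequence of ULA'', but pullback along a closed immersion does \emph{not} preserve ULA, and no Hansen--Scholze base-change statement gives this. Concretely: take $S=\A^1_k$, $\cX=\A^1_S$, $\iota$ the zero section, and $\cP:=\delta_*\overline{\Q}_\ell$ for $\delta:S\to\cX$ the diagonal section $t\mapsto(t,t)$. Then $\cP\in\Perv^{\ULA}(\cX/S)$, but $\iota^*\cP$ is a skyscraper at $0\in S$, not lisse. Of course this $\cP$ does not satisfy the hypothesis $\cP|_{\cX_\eta}\simeq\iota_{\eta*}\cL_{[\eta]}$, which is the point: you must \emph{use} that hypothesis to get lissity of $\iota^*\cP$, and your plan does not say how.

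The paper's fix is short and clean: let $j:U:=\cX\setminus\iota(S)\hookrightarrow\cX$ be the complementary open immersion. Then $(j^*\cP)|_{U_\eta}\simeq j_\eta^*\iota_{\eta*}\cL_{[\eta]}=0$, and since $j^*\cP\in\Perv^{\ULA}(U/S)$ and $-|_{U_\eta}$ is fully faithful on that category, $j^*\cP=0$. The triangle $j_!j^*\cP\to\cP\to\iota_*\iota^*\cP\to$ then gives $\cP\simeq\iota_*\iota^*\cP$, and \emph{now} $\iota^*\cP\in D^{\ULA}(S/S)$ (ULA descends along $\iota_*$ for a closed immersion), hence is lisse; since its generic stalk is $\iota_\eta^*\iota_{\eta*}\cL_{[\eta]}\simeq\cL_{[\eta]}$, it is concentrated in degree $0$ and $\cL:=\iota^*\cP\in\Loc(S)$. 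This also delivers $\iota_*\cL\simeq\cP$ directly, so step (3) becomes trivial --- no shrinking of $S$ is needed at any point. The missing idea in your proposal is precisely this ``kill the open complement first'' step.
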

 
 \begin{proof} As $f:\cX\rightarrow S$ is separated, $\iota: S\hookrightarrow \cX$ is a closed immersion; let $j:U:=\cX\setminus \iota(S)\hookrightarrow \cX$ denote the complementary open immersion. Then $(j^*\cP)|_{U_\eta}\simeq j_\eta^*(\cP|_{\cX_\eta})\simeq  j_\eta^*\iota_{\eta *}\cL_{[\eta]}\simeq 0$. As $j^*\cP\in \Perv^{\ULA}(U/S)$ and $-|_{U_\eta}:\Perv^{\ULA}(U/S)\rightarrow \Perv(U_\eta)$ is fully faithful, this forces $j^*\cP=0$. From the distinguished triangle 
 $$j_!j^*\cP\rightarrow \cP\rightarrow \iota_*\iota^*\cP\stackrel{+1}{\rightarrow}$$
in $D_c^b(\cX)$,  $\cP\tilde{\rightarrow} \iota_*\iota^*\cP$ hence, by  \cite[Lem. 3.6 (iv)]{Barrett},  $\iota^*\cP\in D^\ULA(S/S)$. From \cite[Lem. 3.7 (ii)]{Barrett}, $\iota^*\cP\in D_{\liss}^b(S)$. But $\eta^*\iota^*\cP\simeq \iota_\eta^*(\cP|_{\cX_\eta})\simeq \iota_\eta^*\iota_{\eta *}\cL_{[\eta]}\simeq \cL_{[\eta]}$, so that $\cL:=\iota^*\cP$ lies in $\Loc(S)$ and has the requested property.
 \end{proof}
 
 \noindent We return to the case where  $f:\cX\rightarrow S$ is an abelian scheme and $\cP\in \Perv^{\ULA}(\cX/S)$. From Lemma \ref{Lem:SpecAr}, one can complete (\ref{Diag:Tannaka}) as 
\begin{equation}\label{Diag:TannakaComp1}\xymatrix{\langle \cP|_{\cX_s}\rangle_0\ar[r]&\langle \cP|_{\cX_s}\rangle\ar[r]^{|_{\cX_{\bar s}}}&\langle \cP|_{\cX_{\bar s}}\rangle\\
\langle \cP \rangle_0\ar[r]\ar[d]_{|_{\cX_\eta}}^\simeq \ar[u]^{|_{\cX_s}}&\langle \cP\rangle\ar[u]^{|_{\cX_s}}\ar[d]_{|_{\cX_\eta}}^\simeq&\\
\langle \cP|_{\cX_\eta}\rangle_0\ar[r]&\langle \cP|_{\cX_{\eta }}\rangle \ar[r]^{|_{\cX_{\bar \eta}}}&\langle \cP|_{\cX_{\bar \eta}}\rangle,\\ }\end{equation}
 which, as claimed, formally yields a commutative diagram of algebraic groups:
 
 \begin{equation}\label{Diag:RepComp1}\xymatrix{1\ar[r]& G(\cP|_{\cX_{\bar s}},\omega_{\bar s})\ar[r]\ar@{_{(}.>}[ddd]^{csp_{\bar \eta,\bar s} }&G(\cP|_{\cX_s},\omega_{\bar s})\ar[r]\ar@{_{(}->}[d]&G( \langle\cP|_{\cX_s}\rangle_0,\omega_{\bar s})\ar[r]\ar[d]\ar@/^3pc/[ddd]^{(csp_{\bar \eta,\bar s})_0}&1\\
 && G(\cP,\omega_{\bar s})\ar[d]^\simeq\ar[r]&G( \langle\cP \rangle_0,\omega_{\bar s})\ar[r]\ar[d]^\simeq&1\\
 && G(\cP,\omega_{\bar \eta})\ar[r]&G( \langle\cP \rangle_0,\omega_{\bar \eta})\ar[r]&1\\
1\ar[r]& G(\cP|_{\cX_{\bar \eta}},\omega_{\bar \eta})\ar[r]&G(\cP|_{\cX_\eta}, \omega_{\bar \eta})\ar[r]\ar[u]_\simeq&G( \langle\cP|_{\cX_\eta}\rangle_0,\omega_{\bar \eta})\ar[r]\ar[u]_\simeq&1.\\}\end{equation}

 \subsection{A construction of $csp_{\bar \eta,\bar s} $}\label{Sec:Construction2} 

\subsubsection{Absolutely integrally closed valuation rings and nearby cycles}Recall that a  valuation ring $V$ is said to be absolutely integrally closed (AIC for short)  if it satisfies the following equivalent conditions 
 
\begin{enumerate}[leftmargin=3cm, parsep=0cm,itemsep=0.2cm,topsep=0.2cm,label=(AIC-\arabic*)]
\item The fraction field of $V$ is algebraically closed;
\item Every monic polynomial of degree $\geq 1$ in $V[T]$ has a root in $V$.
\end{enumerate}
In  particular, such a valuation ring  $V$ is  strictly henselian.   \\

\begin{fait}\label{HS}\textit{}\textnormal{(\cite[Thm. 1.7, Thm. 6.1 (ii), Cor. 4.2]{HansenScholze})} Let $S=\spec(V)$ be the spectrum of an AIC valuation ring with generic point $\eta$ and closed point $s$.  Let $f:\cX\rightarrow S$ be a morphism, separated and of finite presentation and write $\cX_\eta\stackrel{\beta}{\hookrightarrow}\cX\stackrel{\alpha}{\hookleftarrow}\cX_s$ for the inclusions of the generic and closed fibers respectively. Then,\begin{enumerate}[leftmargin=*]
\item The functor $\beta^*:D^{\ULA}(\cX/S )\rightarrow D_c^b(\cX_\eta )$ is an equivalence of   categories with quasi-inverse $R\beta_*:D_c^b(\cX_\eta )\rightarrow D^{\ULA}(\cX/S )$. In particular,  $\beta^*:D^{\ULA}(\cX/S)\rightarrow D_c^b(\cX_\eta )$ restricts to   an equivalence of   categories $\beta^*:\Perv^{\ULA}(\cX/S )\rightarrow \Perv(\cX_\eta )$ with quasi-inverse $R\beta_*:\Perv(\cX_\eta )\rightarrow \Perv^{\ULA}(\cX/S )$. 
\item The nearby cycle functor $R\psi_f=\alpha^*R\beta_*:D(\cX_\eta )\rightarrow D(\cX_s )$ restricts to a functor $R\psi_f :D_c^b(\cX_\eta )\rightarrow D_c^b(\cX_s )$ which is t-exact with respect to the perverse t-structures hence induces an exact functor  $R\psi_f :\Perv(\cX_\eta )\rightarrow \Perv(\cX_s )$.
\item Assume furthermore that $f:\cX\rightarrow S$  is an abelian scheme. Then $R\psi_f :D_c^b(\cX_\eta )\rightarrow  D_c^b(\cX_s )$ is a tensor functor and  $N(\cX_{\eta})\ =\ker( D_c^b(\cX_\eta)\stackrel{R\psi_f}{\rightarrow} D_c^b(\cX_s )\rightarrow D_c^b(\cX_s)/N(\cX_s)$; in particular,  $R\psi_f :\Perv(\cX_\eta )\rightarrow \Perv(\cX_s )$ induces a faithful exact tensor functor $R\psi_f :P(\cX_\eta )\rightarrow  P(\cX_s )$.
\end{enumerate}
\end{fait}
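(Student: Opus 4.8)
The substance is entirely in part (1); parts (2) and (3) I would deduce formally from (1) together with the relative perverse $t$-structure formalism recalled in \S\ref{Sec:KWRel}, proper base change, and additivity of Euler characteristics. For part (1) the plan is to exploit that an AIC valuation ring $V$ has algebraically closed fraction field, so that $\spec(\Frac(V))$ — equivalently the pro-open immersion $\beta\colon\cX_\eta\hookrightarrow\cX$, a cofiltered limit of the open immersions $\cX_{S'}\hookrightarrow\cX$ over the open neighbourhoods $S'\ni\eta$ — is ``homotopically trivial'' along the closed fibre. Concretely one establishes: (i) the counit $\beta^*R\beta_*\to\id$ is an isomorphism on $D_c^b(\cX_\eta)$, by computing $R\beta_*$ through the $\cX_{S'}$, using $j^*Rj_*\simeq\id$ for $j$ open, and passing to the limit with constructibility; and (ii) for every $f$-ULA complex $\cL$ the unit $\cL\to R\beta_*\beta^*\cL$ is an isomorphism — by (i) its cone has vanishing generic fibre, so (after a reduction to fibres using constructibility and the structure of $\spec V$) this amounts to the specialization map $\alpha^*\cL\to R\psi_f\beta^*\cL$ being an isomorphism, which over an AIC base is precisely the meaning of universal local acyclicity. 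Together (i)--(ii) make $\beta^*$ and $R\beta_*$ mutually quasi-inverse between $D^{\ULA}(\cX/S)$ and $D_c^b(\cX_\eta)$; that this restricts to the perverse hearts is because $-|_{\cX_\eta}$ is $t$-exact from the relative perverse $t$-structure to the perverse one (the perverse cohomology functors commuting with restriction to fibres, as recalled in \S\ref{Sec:KWRel}), so $R\beta_*=(\beta^*)^{-1}$ is $t$-exact as well. This is the main theorem of \cite{HansenScholze}, and I would cite it rather than reprove it; see the last paragraph.

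Granting (1), part (2) is immediate: $R\psi_f=\alpha^*\circ R\beta_*$ is the composite $D_c^b(\cX_\eta)\xrightarrow{R\beta_*}D^{\ULA}(\cX/S)\xrightarrow{\alpha^*=-|_{\cX_s}}D_c^b(\cX_s)$, where by (1) $R\beta_*$ carries $\Perv(\cX_\eta)$ into $\Perv^{\ULA}(\cX/S)$ $t$-exactly and $-|_{\cX_s}$ carries $\Perv^{\ULA}(\cX/S)$ into $\Perv(\cX_s)$ exactly (\S\ref{Sec:KWRel}); the commutation of ${}^{p/S}H^n$ with $-|_{\cX_s}$ then upgrades this to perverse $t$-exactness of $R\psi_f$. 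For part (3), assume $f$ is an abelian scheme with group law $m\colon\cX\times_S\cX\to\cX$ and put $\widetilde{\cK_i}:=R\beta_*\cK_i$. Since $\boxtimes_S^L$ and (for the proper map $m$) $Rm_*$ both preserve ULA complexes — as used already to define $*$ on $D^{\ULA}(\cX/S)$ — the complex $\widetilde{\cK_1}*\widetilde{\cK_2}=Rm_*(\widetilde{\cK_1}\boxtimes_S^L\widetilde{\cK_2})$ is $f$-ULA with generic fibre $\cK_1*\cK_2$, hence $\simeq R\beta_*(\cK_1*\cK_2)$ by uniqueness in (1). Applying $\alpha^*$ and using proper base change along $m$ together with the compatibility of $\boxtimes_S^L$ with restriction to a fibre,
\[ R\psi_f(\cK_1*\cK_2)\ \simeq\ \alpha^*\bigl(\widetilde{\cK_1}*\widetilde{\cK_2}\bigr)\ \simeq\ Rm_{s,*}\bigl((\alpha^*\widetilde{\cK_1})\boxtimes^L(\alpha^*\widetilde{\cK_2})\bigr)\ \simeq\ R\psi_f(\cK_1)*R\psi_f(\cK_2), \]
and $R\psi_f$ also sends the unit to the unit ($\alpha^*R\beta_*\beta^*\delta_{S,0}\simeq\alpha^*\delta_{S,0}$) and is compatible with the commutativity constraint; thus $R\psi_f$ is symmetric monoidal, automatically compatible with duality since both categories are rigid. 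Finally, $R\psi_f$ preserves Euler characteristics: for $\cQ\in\Perv(\cX_\eta)$ proper base change gives $R\Gamma(\cX_s,R\psi_f\cQ)\simeq(Rf_*R\beta_*\cQ)_s$, and $Rf_*R\beta_*\cQ$ lies in $D_{\liss}^b(S)$ (as $Rf_*$ sends $D^{\ULA}(\cX/S)$ there, $f$ being proper), hence is constant since $V$ is strictly henselian, so $(Rf_*R\beta_*\cQ)_s\simeq(Rf_*R\beta_*\cQ)_\eta\simeq R\Gamma(\cX_\eta,\cQ)$ and $\chi(\cX_s,R\psi_f\cQ)=\chi(\cX_\eta,\cQ)$. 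Combined with perverse $t$-exactness (so $R\psi_f\circ{}^pH^n\simeq{}^pH^n\circ R\psi_f$), this yields for any $\cK\in D_c^b(\cX_\eta)$ the chain $\cK\in N(\cX_\eta)\iff\chi(\cX_\eta,{}^pH^n\cK)=0\ \forall n\iff\chi(\cX_s,{}^pH^n R\psi_f\cK)=0\ \forall n\iff R\psi_f\cK\in N(\cX_s)$, i.e. the claimed equality of null systems; the induced functor $P(\cX_\eta)\to P(\cX_s)$ is then exact (quotient of an exact $t$-exact functor), faithful (its object-kernel is $N(\cX_\eta)$, i.e. $0$ in $P(\cX_\eta)$), and symmetric monoidal.

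The main obstacle is part (1): that over an AIC valuation ring $\beta^*$ identifies $D^{\ULA}(\cX/S)$ with $D_c^b(\cX_\eta)$, with quasi-inverse $R\beta_*$. This is a genuinely deep statement resting on the whole theory of universal local acyclicity over non-Noetherian bases — approximation arguments reducing to finitely generated and then finite subrings of $V$; the recognition that ULA is the correct finiteness condition replacing lisseness; and the fact that over an AIC base the canonical extension $R\beta_*$ actually lands in the ULA subcategory — and I would not reprove it but cite \cite{HansenScholze} (precisely the references already given in the statement). Everything else above needs only the formalism of \S\ref{Sec:KWRel}, proper base change for the multiplication of the abelian scheme, stability of ULA under $\boxtimes_S^L$ and proper pushforward, and additivity of Euler characteristics.
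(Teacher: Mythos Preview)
The paper does not prove this statement: it is recorded as a \emph{Fact} with explicit references to \cite[Thm.~1.7, Thm.~6.1~(ii), Cor.~4.2]{HansenScholze} and no proof environment follows. Your proposal therefore goes strictly beyond the paper's treatment, which is simply to cite. Your deductions of (2) and (3) from (1) are correct and are essentially the arguments one would give if spelling things out: the factorisation $R\psi_f=-|_{\cX_s}\circ R\beta_*$ through $D^{\ULA}(\cX/S)$ and the commutation of ${}^{p/S}H^n$ with fibre restriction give (2); the monoidality argument via ``ULA extension of a convolution is the convolution of the ULA extensions, by uniqueness in (1)'', together with proper base change and the constancy of $Rf_*R\beta_*\cQ\in D_{\liss}^b(S)$ over the strictly henselian base, give (3). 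You are also right that (1) is the substantive input and should be cited rather than reproved.
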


\noindent From   \cite[Lem. 3.28]{BhattMathew}, for a quasi-compact, quasi-separated scheme $T$, a specialization $t_1\rightsquigarrow t$ of points on $T$, one can always find  a morphism $S\rightarrow T$ with  source the spectrum $S=\Spec(V)$ of  an AIC valuation ring $V$,  mapping the generic point $\eta$  (resp. the closed point $s$) of $S$ to $t_1$ (resp. $t$).  We will call such a morphism - usually written  as $(S,\eta,s)\rightarrow (T,t_1,t)$, a \textit{witness} for  $t_1\rightsquigarrow t$ in $T$.  The proof of  \cite[Lem. 3.28]{BhattMathew} shows that, if furthermore one fixes a geometric point    $\overline{t}_1$ over $t_1$, one can choose $S$ in such a way that $\eta\rightarrow t_1$ factors as $\eta\rightarrow \overline{t}_1\rightarrow t_1$; if we want to specify a geometric point over which $\eta\rightarrow t_1$ factors, we will rather write $(S,\eta,s)\rightarrow (T,\overline{t}_1,t)$.

\subsubsection{}\label{Sec:Cons}We return to the case where $S$ is a smooth, geometrically connected variety over $k$ and $f:\cX\rightarrow S$ is an abelian scheme. For every specialization  $  \eta \rightsquigarrow s$ of   points  on $S$, fix a witness  $(S',\eta',s')\rightarrow (S,\eta, s)$ and geometric points $\eta'\rightarrow \bar \eta\rightarrow \eta$, $s'\rightarrow \bar s\rightarrow s$. Set $f':\cX':=\cX\times_SS'\rightarrow S'$. From Fact \ref{HS}, one gets a canonical diagram of  $\overline{\Q}_\ell$-linear abelian categories 
$$\xymatrix{&\Perv(\cX_s)\ar[r]^{|_{\cX_{\bar s}}}&\Perv(\cX_{\bar s})\ar[r]^{|_{\cX'_{ s'}}}&\Perv(\cX'_{ s'})\\
\Perv^\ULA(\cX/S)\ar[r]^{|_{\cX'}}\ar[ur]^{|_{\cX_s}}\ar[dr]_{|_{\cX_\eta}}^\approx&\Perv^\ULA(\cX'/S')\ar@/_0.5cm/[urr]^{|_{\cX'_{s'}}}\ar@/^0.5cm/[drr]_{|_{\cX'_{\eta'}}}^\simeq&&\\
&\Perv(\cX_\eta)\ar[r]_{|_{\cX_{\bar \eta}}}&\Perv(\cX_{\bar \eta})\ar[r]_{|_{\cX'_{  \eta'}}}&\Perv(\cX'_{ \eta'})\ar@{.>}@/_2pc/[uu]_{R\psi_{f'}}}$$
which induces, for every   $\cP\in \Perv^{\ULA}(\cX/S)$, a canonical diagram of Tannakian categories
\begin{equation}\label{Diag:TannakaComp2}\xymatrix{&\langle \cP|_{\cX_s}\rangle \ar[r]^{|_{\cX_{\bar s}}}&\langle \cP|_{\cX_{\bar s}}\rangle\ar[r]^{|_{\cX'_{ s'}}}_\simeq &\langle \cP|_{\cX'_{ s'}}\rangle\\
 \langle \cP \rangle\ar[r]^{|_{\cX'}}\ar[ur]^{|_{\cX_s}}\ar[dr]_{|_{\cX_\eta}}^\simeq& \langle \cP|_{\cX'} \rangle\ar@/_0.5cm/[urr]^{|_{\cX'_{s'}}}\ar@/^0.5cm/[drr]^\simeq_{|_{\cX'_{\eta'}}}&&\\
&\langle \cP|_{\cX_\eta}\rangle\ar[r]_{|_{\cX_{\bar \eta}}} &\langle \cP|_{\cX_{\bar \eta}}\rangle\ar[r]^\simeq_{|_{\cX'_{\eta'}}} &\langle \cP|_{\cX'_{ \eta'}}\rangle\ar@{.>}@/_2pc/[uu]_{R\psi_{f'}}},\end{equation}
which in turn, as claimed, formally yields a commutative diagram of algebraic groups

 \begin{equation}\label{Diag:RepComp2}\xymatrix{1\ar[r]& G(\cP|_{\cX_{\bar s}},\omega_{s'})\ar[r] \ar@/_6pc/@{.>}[dddd]_{csp_{\bar \eta,\bar s} }&G(\cP|_{\cX_s},\omega_{s'})\ar[r]\ar@{_{(}->}[dd]&G( \langle\cP|_{\cX_s}\rangle_0,\omega_{s'})\ar[r]\ar[dd]\ar@/^7pc/@{.>}[dddd]^{(csp_{\bar \eta,\bar s})_0}&1\\
 & G(\cP|_{\cX'_{ s'}},\omega_{\bar s}) \ar[d]\ar[u]_\simeq\ar@/_3pc/[dd]_{R\psi_{f'}}& & & \\
 &G(\cP|_{\cX'},\omega_{s'})  \ar@{^{(}->}[r]&  G(\cP,\omega_{s'})\ar[r]&G( \langle\cP \rangle_0,\omega_{s'})\ar[r]&1\\
  & G(\cP|_{\cX'_{ \eta'}},\omega_{s'}) \ar[d]^\simeq \ar[u]_\simeq & & & \\
1\ar[r]& G(\cP|_{\cX_{\bar \eta}},\omega_{s'})\ar[r]&G(\cP|_{\cX_\eta}, \omega_{s'})\ar[r]\ar[uu]_\simeq&G( \langle\cP|_{\cX_\eta}\rangle_0,\omega_{s'})\ar[r]\ar[uu]_\simeq&1\\}\end{equation}
 For simplicity, we now omit fiber functors from the notation.

 \section{Proofs}\label{Sec:Proofs} \textit{}\\
 \noindent  Unless otherwise stated, in this Section   $k$ denotes  a field of characteristic $0$,  $S $    a  smooth  geometrically connected variety over $k$  with generic point $\eta$ and  $f:\cX\rightarrow S$   a morphism, separated and of finite type.

 \subsection{Proof of Theorem \ref{MT} and Corollary \ref{Cor:MT}} 

 \subsubsection{Recollection on artinian and noetherian abelian categories}\textit{}\\

\paragraph{}Let   $\cA$ be a an  artinian and noetherian abelian category.  Then,

  \begin{enumerate}[leftmargin=*, parsep=0cm,itemsep=0.2cm,topsep=0.2cm]
 \item   For every $A\in \cA$ and $\phi\in \SEnd_{\cA}(A)$, one has a $\phi$-stable  direct sum decomposition (Fitting lemma): $A\simeq A_{\phi,0} \oplus  A_{\phi,\infty}$, with the property that the induced morphism $\phi:A_{\phi,0}\rightarrow A_{\phi,0}$ is nilpotent and   $\phi:A_{\phi,\infty}\rightarrow A_{\phi,\infty}$ is an automorphism; explicitly  $A_{\phi,0}=\ker(\phi^n)$, for $n\gg 0$,   $A_{\phi,\infty}=\Sim(\phi^n)$, for $n\gg 0$. In particular, for every $A\in \cA$, $A$ is indecomposable in $\cA$ if and only if $\SEnd_{\cA}(A)$ is a  local ring and every $A\in \cA$ admits  a Krull-Schmidt decomposition: $A$ decomposes into a direct sum $A=\oplus_{1\leq i\leq r} A_i$ with $A_1,\dots, A_r\in \cA$ indecomposable and the  indecomposable  objects $A_1,\dots, A_r$ (counted with multiplicity) are unique up to isomorphism and called the Krull-Schmidt or indecomposable factors of $A$.  In particular, $A$ is semisimple if and only if its indecomposable factors are  simple. 

  \item   Every   $A\in \cA$   admits a composition series    that is  a filtration   $$0=A_{r+1}\subsetneq A_r\subsetneq \cdots \subsetneq A_2\subsetneq A_1:=A$$ in $\cA$ with $S_i:=A_i/A_{i+1}$ a simple object in $\cA$, $i=1,\dots, r$. Furthermore,  the  simple objects $S_1,\dots, S_r$ (counted with multiplicity) are unique up to isomorphism and called the Jordan-H\"{o}lder or simple factors of $A$. In particular the length $\hbox{\rm length}_{\cA}(A):=r$  of $A$ is a well-defined integer.   
  \item Let $\cN\subset \cA$ be a Serre subcategory and let $p:\cA\rightarrow\overline{\cA}:=\cA/\cN$ denote the resulting quotient functor, which is   exact and essentially surjective. Then,  for every simple object $S$ in $ \cA$ not lying in $\cN$, $p(S)$ is again a simple object in $\overline{\cA}$. This follows from the definition of morphisms in $\overline{\cA}$. Indeed, consider a  diagram
  $$ X\stackrel{f}{\rightarrow} Y \stackrel{s}{\hookleftarrow} S$$
  in $\cA$ with $N:=\hbox{\rm coker}(s)\in \cN$ such that the resulting morphism 
  $$p(X)\stackrel{p(f)}{\rightarrow} p(Y) \stackrel{p(s)^{-1}}{\stackrel{\simeq}{\rightarrow}} p(S)$$
  is injective. In particular, the morphism $p(S) \stackrel{p(s)}{\stackrel{\simeq}{\rightarrow}} p(Y)\rightarrow p(Y/X)$ is surjective. So either $p(Y/X)=0$ and $p(X)\stackrel{p(f)}{\rightarrow} p(Y) $ is an isomorphism in $\overline{\cA}$ or the morphism $S\stackrel{s}{\rightarrow} Y\rightarrow Y/X$ is non-zero hence injective. But then  the morphism $p(S) \stackrel{p(s)}{\stackrel{\simeq}{\rightarrow}} p(Y)\rightarrow p(Y/X)$ is an isomorphism in $\overline{\cA}$ hence so is $p(Y)\rightarrow p(Y/X)$, which imposes $p(X)=0$. In particular, for every $A\in \cA$, if one defines $\hbox{\rm length}_{\cA,\cN}(A)\leq \hbox{\rm length}_{\cA}(A)$ to be the number of  Jordan-H\"{o}lder factors of $A$ which lies in $\cN$, one has 
  $$ \hbox{\rm length}_{\overline{\cA}}(p(A))+ \hbox{\rm length}_{\cA,\cN}(A)= \hbox{\rm length}_{\cA}(A).$$
 \end{enumerate} 

 \paragraph{}\label{Par:PrelFunctor}Let now $\cA_1$, $\cA_2$ be artinian and noetherian abelian categories and let $F:\cA_1\rightarrow \cA_2$ be an additive functor.

  \begin{enumerate}[leftmargin=*, parsep=0cm,itemsep=0.2cm,topsep=0.2cm]
 \item Assume $F:\cA_1\rightarrow \cA_2$  is fully faithful.  Then for every $A_1\in \cA_1$, $A_1$ is indecomposable in $\cA_1$ if and only if $F(A_1)$ is indecomposable in $\cA_2$.
 \item Consider the following conditions  
  $$\begin{tabular}[t]{llll}
 (S$_{F,[??]}$)  & For every $A_1\in \cA_1$, $A_1$ is simple in $\cA_1$  &$[??]$ &$F(A_1)$ is simple  in $\cA_2$;\\
  (SS$_{F,[??]}$)  & For every $A_1\in \cA_1$, $A_1$ is semisimple in $\cA_1$  &$[??]$ &$F(A_1)$ is semisimple  in $\cA_2$. 
    \end{tabular}$$
    with $[??]$ one of $\Leftarrow$, $\Rightarrow$, $\Leftrightarrow$. 
    \begin{enumerate}[leftmargin=*, parsep=0cm,itemsep=0.2cm,topsep=0.2cm]
     \item Then  (S$_{F,\Rightarrow}$) always implies (SS$_{F,\Rightarrow}$) and, if  furthermore $F:\cA_1\rightarrow \cA_2$  is  exact, then for every $A_1\in \cA_1$, 
$$\hbox{\rm length}_{\cA_1}(A_1)=\hbox{\rm length}_{\cA_2}(F(A_1)).$$
 \item If   $F:\cA_1\rightarrow \cA_2$  is fully faithful then (S$_{F,\Leftarrow}$)  implies (SS$_{F,\Leftarrow}$). Indeed, let $A_1\in \cA_1$ and 
  consider the Krull-Schmidt decomposition $A_1=\oplus_{1\leq i\leq r} A_{1,i}$ of $A_1$ in $\cA_1$. Then $F(A_1)=\oplus_{1\leq i\leq r} F(A_{1,i})$ is the Krull-Schmidt decomposition of  $F(A_1)$ in $\cA_2$. So, one has 
 $$\begin{tabular}[t]{ll}
  $F(A_1)$ is semisimple in  $\cA_2$  &$\Longleftrightarrow$ $F(A_{1,i})$ is simple in $\cA_2$, $i=1,\dots r$\\
  &$\stackrel{\hbox{\rm \tiny (S$_{F,\Leftarrow}$)}}{\Longrightarrow}$ $A_{1,i}$ is simple in $\cA_1$, $i=1,\dots r$\\
    &$\Longleftrightarrow$ $A_1$ is semisimple in $\cA_1$.
    \end{tabular}$$
    \end{enumerate}
    \item   Assume $F:\cA_1\rightarrow \cA_2$  is exact. Let $\cN_2\subset \cA_2$ be a Serre subcategory and let $p_2: \cA_2\rightarrow \overline{\cA}_2:=\cA_2/\cN_2$ denote the resulting quotient functor. The full subcategory 
$$\cN_1:=\ker(\cA_1\stackrel{F}{\rightarrow}\cA_2\stackrel{p_2}{\rightarrow}\overline{\cA}_2)\subset \cA_1$$
is also a Serre subcategory and the   resulting quotient functor
 $p_1: \cA_1\rightarrow \overline{\cA}_1:=\cA_1/\cN_1 $ fits into a canonical commutative diagram of exact functors
 $$\xymatrix{ \cA_1\ar[r]^{p_1}\ar[d]_F& \overline{\cA}_1\ar[d]^{\overline{F}}\\
  \cA_2\ar[r]_{p_2}& \overline{\cA}_2.}$$
  As  $\overline{F}:\overline{\cA}_1\rightarrow \overline{\cA}_2$ is faithful exact,     (S$_{\overline{F},\Leftarrow}$) always  holds. 
    \begin{enumerate}[leftmargin=*, parsep=0cm,itemsep=0.2cm,topsep=0.2cm]
     \item Also,  (S$_{F, \Rightarrow}$) always implies  (S$_{\overline{F}, \Rightarrow}$). Indeed, for every $A_1\in \cA_1$, consider a  Jordan-H\"{o}lder filtration   $$\underline{A}_1: 0=A_{1,r+1}\subsetneq A_{1,r}\subsetneq \cdots \subsetneq A_{1,2}\subsetneq A_{1,1}:=A_1$$ of $A_1$ in $\cA_1$.
Assume   (S$_{F, \Rightarrow}$) holds.
 Then 
     $$F(\underline{A}_1): 0=F(A_{1,r+1})\subsetneq F(A_{1,r})\subsetneq \cdots \subsetneq F(A_{1,2})\subsetneq F(A_{1,1}):=F(A_1)$$ is again a Jordan-H\"{o}lder filtration of $F(A_1)$ in $\cA_2$. If $p_1(A_1)$ is simple in $\overline{\cA}_1$, then  all but one of the   $ A_{1,i}/  A_{1,i+1}$     lie in $\cN_1$ which, again by definition of $\cN_1$, ensures that all but one of the  $ F(A_{1,i})/  F(A_{1,i+1})$ lie in $\cN_2$, hence that    $\overline{F}(p_1(A_1))$ is simple in $\overline{\cA}_2$.
     
     \item  The argument in (3) (a) shows more precisely that for every  $A_1\in \cA_1$  with a  Jordan-H\"{o}lder filtration   $$\underline{A}_1: 0=A_{1,r+1}\subsetneq A_{1,r}\subsetneq \cdots \subsetneq A_{1,2}\subsetneq A_{1,1}:=A_1$$   in $\cA_1$, if   $ F(A_{1,i})/  F(A_{1,i+1})$  is a simple object in $\cA_2$, $i=1,\dots, r$,  then one has
$$\hbox{\rm length}_{\cA_1}(A_1)=\hbox{\rm length}_{\cA_2}(F(A_1)),\;\; \hbox{\rm length}_{\overline{\cA}_1}(p_1(A_1))=\hbox{\rm length}_{\overline{\cA}_2}(\overline{F}(p_1(A_1)))$$ 
and   \begin{itemize}[leftmargin=1cm, parsep=0cm,itemsep=0.2cm,topsep=0cm, label=]
 \item $A_1$ is semisimple in $\cA_1$ $\Rightarrow$ $F(A_1)$ is semisimple in $\cA_2$
 \item  $p_1(A_1)$ is semisimple in $\overline{\cA}_1$ $\Rightarrow$  $\overline{F}(p_1(A_1))$ is semisimple in $\overline{\cA}_2$.\\
 \end{itemize}  
      \end{enumerate}  
           \end{enumerate} 
\subsubsection{Preliminary reductions} \textit{}\\
\paragraph{\textit{Independence of the geometric point}}\label{Par:IndepGeoPt} The following observation  will enable us to choose geometric points freely. 
 Let $f:\cX\rightarrow S$ be a morphism, separated and of finite type. Let  $\cP \in   \Perv^{\ULA}(\cX/S)$. Let $t\in S$ and let $\bar t_1,\bar t_2$ be two geometric points over $t$. Then

  \begin{enumerate}[leftmargin=*, parsep=0cm,itemsep=0.2cm,topsep=0.2cm]
 \item  $\cP|_{\cX_{\bar t_1}}$ is simple (resp. semisimple) in $\Perv(\cX_{\bar t_1})$ if and only if $\cP|_{\cX_{\bar t_2}}$ is simple (resp. semisimple) in $\Perv(\cX_{\bar t_2})$ and one has
$$\hbox{\rm length}_{\Perv(\cX_{\bar t_1})}(\cP|_{\cX_{\bar t_1}})=\hbox{\rm length}_{\Perv(\cX_{\bar t_2})}(\cP|_{\cX_{\bar t_2}}).$$
  \item Assume furthermore that $f:\cX\rightarrow S$ is an abelian scheme. Then  $\cP|_{\cX_{\bar t_1}}$ is simple (resp. semisimple) in $P(\cX_{\bar t_1})$ if and only if $\cP|_{\cX_{\bar t_2}}$ is simple (resp. semisimple) in $P(\cX_{\bar t_2})$ and  one has
$$\hbox{\rm length}_{P(\cX_{\bar t_1})}(\cP|_{\cX_{\bar t_1}})=\hbox{\rm length}_{P(\cX_{\bar t_2})}(\cP|_{\cX_{\bar t_2}}).$$
 \end{enumerate}  
\noindent Indeed,  by considering a geometric point $\bar t$ over both $\bar t_1$ and $\bar t_2$ one immediately reduces to the case where, say, $\bar t_2$ is over $\bar t_1$. As the restrictions functors 
  $$-|_{\cX_{\bar t_2}}:\Perv(\cX_{\bar t_1}) \rightarrow \Perv(\cX_{\bar t_2}),\;\; -|_{\cX_{\bar t_2}}:P(\cX_{\bar t_1}) \rightarrow P(\cX_{\bar t_2})$$
are exact, fully faithful (\textit{e.g.} \cite[Lem. A.1]{JKLM}), the observations in Paragraph \ref{Par:PrelFunctor} (2) reduce the proof  of (1) and (2) to showing respectively that for every $\cP_1\in \Perv(\cX_{\bar t_1})$,
\begin{enumerate}[leftmargin=*, parsep=0cm,itemsep=0.2cm,topsep=0.2cm, label=(\arabic*)']
 \item   $\cP_1$ is simple in $\Perv(\cX_{\bar t_1})$ if and only if $\cP_1|_{\cX_{\bar t_2}}$ is simple in $\Perv(\cX_{\bar t_2})$;
 \item (if $f:\cX\rightarrow S$ is an abelian scheme) $\cP_1$ is simple in $P(\cX_{\bar t_1})$ if and only if $\cP_1|_{\cX_{\bar t_2}}$ is simple in $P(\cX_{\bar t_2})$
 \end{enumerate}  while the observation of Paragraph \ref{Par:PrelFunctor} (3) (a) shows that  Assertion  (2)'  follows from Assertion  (1)'. Let us prove Assertion  (1)'.  The if part of  follows from the fact that  $-|_{\cX_{\bar t_2}}:\Perv(\cX_{\bar t_1}) \rightarrow \Perv(\cX_{\bar t_2})$ is exact and fully faithful. For the only if part, from
  \cite[Thm. 4.3.1 (ii)]{BBD} every simple object $\cS$  in $\Perv(\cX_{\bar t_1})$ is of the form $\cS=\iota_{1*}j_{1!*}\cF_1[d]$ for some 
 irreducible closed subscheme $\iota_1: Z _1\hookrightarrow \cX_{\bar t_1}$,  non-empty open subscheme $j_1:U_1 \hookrightarrow Z_1$,   smooth over $k(\bar t_1)$ and pure of dimension $d$, and  simple $\overline{\Q}_\ell$-local system $\cF_1$ on $U_1$. Consider the base-change diagram
 $$\xymatrix{ U _2\ar@{^{(}->}[r]^{j_2}\ar[d]\ar@{}[dr]|\square\ar[d]&Z _2\ar@{^{(}->}[r]^{\iota_2}\ar@{}[dr]|\square\ar[d]& \cX_{\bar t_2}\ar[r]\ar@{}[dr]|\square\ar[d]&\spec(k(\bar t_2))\ar[d]\\
 U _1\ar@{^{(}->}[r]^{j_1}&Z _1\ar@{^{(}->}[r]^{\iota_1}& \cX_{\bar t_1}\ar[r]&\spec(k(\bar t_1))}$$
 and set $\cF_2:=\cF_1|_{U_2}$. Then $$\cS|_{\cX_{\bar t_2}}= (\iota_{1*}j_{1!*}\cF_1[d])|_{\cX_{\bar t_2}}\simeq \iota_{2*}j_{2!*}\cF_2[d]$$ and the assertion follows from the fact that  the restriction functor 
  $$-|_{U_2}:\Loc(U_1) \rightarrow \Loc(U_2)$$
maps simple objects to simple objects since 
  the canonical morphism $\pi_1(U_2)\rightarrow \pi_1(U_1)$ is surjective (\textit{e.g.}  \cite[\href{https://stacks.math.columbia.edu/tag/0387}{Tag 0387}]{stacks-project}).\\

\paragraph{}  Let $f:\cX\rightarrow S$ be a morphism, separated and of finite type. Every witness  $(S',\eta',s')\rightarrow (S,\eta,s)$ induces a canonical exact functor 
$$R\psi_{f'}: \Perv(\cX_{\eta'}) \stackrel{\simeq}{\rightarrow} \Perv^{\ULA}(\cX'/S')\rightarrow \Perv(\cX_{s'}),$$
 where the notation are as follows
$$\xymatrix{\cX'\ar[r]\ar[d]_{f'}\ar@{}[dr]|\square&\cX\ar[d]^f\\
S'\ar[r]& S.}$$ 

\noindent Assume furthermore $f:\cX\rightarrow S$ is an abelian scheme. Then  
 $$N(\cX_{\eta'})\cap \Perv(\cX_{\eta'})=\ker(\Perv(\cX_{\eta'})\stackrel{R\psi_{f'}}{\rightarrow}\Perv(\cX_{s'})\rightarrow P(\cX_{s'})).$$
\noindent So,  Paragraph \ref{Par:IndepGeoPt}  and the  observations in Paragraph \ref{Par:PrelFunctor} (2) (b) applied to 
 $$\xymatrix{ \cA_1\ar[r]^p\ar[d]_F& \overline{\cA}_1\ar[d]^{\overline{F}}&=&\Perv(\cX_{\eta'})\ar[r]\ar[d]_{R\psi_{f'}}&P(\cX_{\eta'})\ar[d] \\
  \cA_2\ar[r]_{p'}& \overline{\cA}_2&&  \Perv(\cX_{s'})\ar[r] & P(\cX_{s'}).}$$
reduce the proof of Theorem \ref{MT} and Corollary \ref{Cor:MT} to the following statement.

\begin{theoreme}\label{MTBis} Let $f:\cX\rightarrow S$ be a morphism, separated and of finite type.  Let  $\cP_i \in   \Perv(\cX_{\bar \eta})$, $i=1,\dots, r$  be finitely many simple objects in $ \Perv(\cX_{\bar \eta})$.  After possibly replacing $S$ by a  non-empty open subscheme $  S$ the following holds. For every $s\in S$, there exists a witness $(S',\eta',s')\rightarrow (S,\overline{\eta},s)$ such that 
$R\psi_{f'}(\cP_i|_{\cX_{\eta'}})$ is simple in $\Perv(\cX_{s'})$, $i=1,\dots, r$. \end{theoreme}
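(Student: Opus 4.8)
The plan is to reduce the problem to a statement about how simple perverse sheaves on the generic fiber behave under nearby cycles along a witness, and then to prove that over a suitable open subscheme the Jordan--H\"older structure is preserved. First I would use the structure theorem for simple perverse sheaves (\cite[Thm. 4.3.1]{BBD}): each $\cP_i$ is of the form $\iota_{i*}j_{i!*}\cF_i[d_i]$ for an irreducible closed subscheme $Z_i\subset \cX_{\bar\eta}$, a smooth dense open $U_i\subset Z_i$, and a simple (irreducible) $\overline{\Q}_\ell$-local system $\cF_i$ on $U_i$. By spreading out, after shrinking $S$ I may assume all these data are defined over $S$: there are relatively irreducible closed subschemes $\cZ_i\subset \cX$, relatively smooth dense open $\cU_i\subset \cZ_i$, and $\overline{\Q}_\ell$-local systems $\cF_i$ on $\cU_i$ whose geometric generic fibers recover the original data, and such that $\iota_{i*}j_{i!*}\cF_i[d_i]$ is an $f$-ULA relative perverse sheaf on $\cX/S$ (using e.g. generic ULA-ness \cite{SGA4Half}, \cite{Barrett}, and that intermediate extension commutes with smooth base change and with the generic-fiber restriction which is $\approx$). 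Call this relative perverse sheaf $\widetilde{\cP}_i$; by full faithfulness of $-|_{\cX_{\bar\eta}}$ on the generic fiber, $\widetilde{\cP}_i|_{\cX_{\bar\eta}}\simeq\cP_i$.

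Next, for a given $s\in S$, choose a witness $(S',\eta',s')\to(S,\overline\eta,s)$ with $S'=\Spec(V)$, $V$ an AIC valuation ring. By Fact \ref{HS}, $R\psi_{f'}(\cP_i|_{\cX_{\eta'}})\simeq \alpha^*R\beta_*(\cP_i|_{\cX_{\eta'}})\simeq\widetilde{\cP}_i|_{\cX_{s'}}$, so I must show $\widetilde{\cP}_i|_{\cX_{s'}}$ is simple in $\Perv(\cX_{s'})$. Now $\widetilde{\cP}_i|_{\cX_{s'}}$ is, by proper base change for $\iota_i$ and compatibility of $j_{i!*}$ with the base change to the fiber, isomorphic to $\iota_{i,s'*}(\cF_i[d_i])|_{\cZ_{i,s'}}$, where the middle extension over the possibly-singular special fiber $\cZ_{i,s'}$ need not a priori agree with the naive pullback of $j_{i!*}$. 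The key point to extract from the ULA hypothesis, which is what makes the spreading-out work uniformly, is precisely that for $f$-ULA relative perverse sheaves the relative intermediate extension restricts fiberwise to the absolute intermediate extension; this is where I would invoke the compatibility of nearby cycles with the intermediate extension functor (the t-exactness of $R\psi_{f'}$ from Fact \ref{HS}(2) plus the fact that $R\psi_{f'}$ sends the open part $\cU_{i,\eta'}$ to its analogue and kills no stratum), so that $\widetilde{\cP}_i|_{\cX_{s'}}\simeq \iota_{i,s'*}j_{i,s'!*}(\cF_i|_{\cU_{i,s'}}[d_i])$, genuinely the intermediate extension along the special fiber.

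It then remains to check two things over the special fiber: that $\cZ_{i,s'}$ is irreducible and $\cU_{i,s'}$ is smooth dense in it (true after shrinking $S$, by constructibility/generic-fiber-dimension arguments, since $\cZ_i\to S$ has irreducible geometric generic fiber and $\cU_i\to S$ is smooth with dense image fiberwise generically — again a spreading-out statement, possibly over a further open), and that $\cF_i|_{\cU_{i,s'}}$ is a simple local system on $\cU_{i,s'}$. The latter follows because, after shrinking $S$ so that $\cU_i\to S$ has geometrically connected fibers, the specialization map on fundamental groups is surjective (as in the argument already given in Paragraph \ref{Par:IndepGeoPt}, using \cite[Tag 0387]{stacks-project}), so restriction of the geometrically simple $\cF_i$ stays simple on the special geometric fiber. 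Given all this, $\iota_{i,s'*}j_{i,s'!*}(\cF_i|_{\cU_{i,s'}}[d_i])$ is simple by \cite[Thm. 4.3.1]{BBD} applied over $k(s')$.

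The main obstacle I anticipate is the middle step: controlling the interaction of nearby cycles with the intermediate extension along a witness whose special fiber may be singular, and making the resulting identification uniform enough that a single open $S$ works for all $s$ simultaneously. This is likely the content that needed the careful rework acknowledged in the introduction (the gap pointed out by Zurbuchen); I would expect the honest argument to proceed by first establishing, via a limit/spreading-out argument, that there is a single open $U\subset S$ over which $\widetilde{\cP}_i$ remains a simple relative perverse sheaf in a suitable fiberwise sense, reducing to \cite[Thm. 4.3.1]{BBD} on each special fiber, rather than trying to chase the intermediate extension through an arbitrary AIC valuation ring directly. The $r>1$ case is no harder than $r=1$, since the conditions can be imposed on a finite intersection of opens.
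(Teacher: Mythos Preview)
Your overall strategy matches the paper's: write each $\cP_i$ as an intermediate extension via \cite[Thm.~4.3.1]{BBD}, spread out the geometric data, identify $R\psi_{f'}(\cP_i|_{\cX_{\eta'}})$ with the restriction of a relative object to the special fiber, and then reduce to simplicity of the local system on the special fiber via a fundamental-group argument. The fundamental-group step you sketch is essentially the one the paper uses (the paper invokes \cite[XIII, 4.3, 4.4]{SGA1}, using that $S'$ is strictly henselian and $\cU'_i\to S'$ has a section, rather than surjectivity of specialization per se, but this is a minor variation).

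The real gap is the one you correctly flag, but your proposed fix does not work. Appealing to generic ULA-ness of the intermediate extension $\widetilde{\cP}_i:=\iota_{i*}j_{i!*}\cF_i[d_i]$ is not enough: knowing $\widetilde{\cP}_i$ is ULA gives you that its fiberwise restrictions are perverse, but says nothing about \emph{why} $\widetilde{\cP}_i|_{\cX_{s'}}$ should again be an intermediate extension. Your suggestion to deduce this from t-exactness of $R\psi_{f'}$ together with the fact that $R\psi_{f'}$ ``kills no stratum'' cannot succeed in general; the paper's own Remark~\ref{semistable RPsi} exhibits a semistable family where $R\psi_f(\overline{\Q}_{\ell,\cX_{\bar\eta}}[d])$ is \emph{not} the intermediate extension $\IC_{\cX_s}$, precisely because nearby cycles need not commute with $j_{!*}$.

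What the paper actually does is to arrange, after shrinking $S$, that $j_{i!}\cF_i$ and $j_{i!}\cF_i^\vee$ are themselves ULA over $S$ (Lemma~\ref{j!ULA}). This is achieved by applying Hironaka's resolution of singularities to $\cZ_{i,\eta}$ to produce a relative smooth normal-crossing compactification $\widetilde{\cZ}_i\to\cZ_i$ over $S$ (Lemma~\ref{Hironaka}), and then using that in characteristic $0$ the pulled-back local system is tamely ramified along the boundary, so that $\widetilde{j}_{i!}$ of it is ULA by \cite{saito2013wild}; pushing down along the proper map $\widetilde{\cZ}_i\to\cZ_i$ and splitting off a direct summand gives $j_{i!}\cF_i$ ULA. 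Once both $j_{i!}\cF_i$ and $j_{i!}D_{\cU_i/S}\cF_i$ are ULA, the \emph{relative} intermediate extension $j_{i,!*/S}\cF_i[d_i]$ is defined as the image of ${}^{p/S}H^0(j_{i!}\cF_i[d_i])\to{}^{p/S}H^0(D_{\cZ_i/S}j_{i!}D_{\cU_i/S}\cF_i[d_i])$, and \emph{this} construction commutes with arbitrary base change $S'\to S$ because each ingredient does (Paragraph~\ref{!* is ULA}, Observation~(2)). That is the missing idea: one does not prove that nearby cycles commute with the absolute $j_{!*}$; one replaces the absolute $j_{!*}$ by a relative $j_{!*/S}$ that is base-change compatible by construction, and the input making this relative object exist is resolution of singularities, not generic ULA-ness of the output.
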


\subsubsection{Proof of Theorem \ref{MTBis}} \textit{}\\

\paragraph{\textit{Intermediate extensions and the ULA property}}\label{!* is ULA} Let $\ell$ be a prime. Let $\cZ\to S$ be a separated morphism of finite presentation of quasi-compact quasi-separated schemes over $\Z[1/\ell]$. Assume that  $S$ has only finitely many irreducible components, so that by \cite[Theorem 6.7]{HansenScholze} the relative perverse t-structure exists on $D^{\ULA}(\cZ/S)$. Let $j:\cU\hookrightarrow \cZ$ be an open immersion of finite presentation.
 	Given $\cK\in D^{\ULA}(\cU/S)$ with $j_!\cK,\, j_!D_{\cU/S}\cK\in  D^{\ULA}(\cZ/S)$, write $j_{!*/S}\cK$ for the image of the natural morphism \[{}^{p/S}H^0(j_!\cK)\to {}^{p/S}H^0(D_{\cZ/S}j_!D_{\cU/S}\cK)\] in the abelian category $\Perv^{\ULA}(\cZ/S)$.  Observe the following
 \begin{enumerate}[leftmargin=*, parsep=0cm,itemsep=0.2cm,topsep=0.2cm]
 \item When $S$ is the spectrum of a field and $\cK\in \Perv(\cU)$,   $j_{!*/S}\cK$ is  the usual middle extension of $\cK$ to $\cZ$.
\item   As, for ULA objects, both the formation of relative Verdier duality and $j_!$ commute with base change $S'\to S$ (\cite[Proposition 3.4 (ii)]{HansenScholze}), the formation of $j_{!*/S}\cK$ also commutes with base-changes $S'\to S$. In particular, 
for every geometric point $\bar{s}$ on $S$, if $j_{\bar s}:\cU_{\bar s}\hookrightarrow \cZ_{\bar s}$ denotes the base change of $j:\cU\hookrightarrow \cZ$ along $\bar s\rightarrow S$, one has $(j_{!*/S}\cK)|_{\cZ_{\bar s}}=j_{\bar{s},!*}(\cK|_{\cU_{\bar s}})$. 
	\end{enumerate}
	
	 \begin{lemme}\label{j!ULA}
 	Consider a  diagram \begin{equation}\label{relative alteration}\xymatrix{\widetilde{\cU}\ar@{^{(}->}[r]^{\widetilde{j}}\ar@{}[dr]|\square\ar[d]_h& \widetilde{\cZ} \ar[d]^g& \\
 			\cU \ar@{^{(}->}[r]^{j}   & \cZ \ar[r]^f & S }\end{equation}
  of quasi-compact quasi-separated schemes. Assume that \begin{itemize}[leftmargin=*, parsep=0cm,itemsep=0.2cm,topsep=0.2cm]
 		\item $f:\cZ\to S$ is separated of finite presentation and $j:\cU\hookrightarrow \cZ$ is an open immersion with $\cU\to S$  smooth, ;
		\item 
$g: \widetilde{\cZ}\to \cZ$ is proper,  and $h: \widetilde{\cU}\to \cU$ is finite étale;
 		\item $ \widetilde{\cZ}\to S$ is smooth and  $ \widetilde{\cD}:=\widetilde{\cZ}\setminus \widetilde{\cU}$ is a divisor on  $ \widetilde{Z}$ with strict normal crossings relative to $ \widetilde{\cZ}\to S$.
 	\end{itemize} 
 	Let $\ell$ be a prime invertible on $S$. 
 	Let $\cF $ be a $\overline{\Q}_\ell$-local system on $\cU$. Assume that $h^*\cF$ is tamely ramified along $ \widetilde{\cD}$. 
 	Then $j_!\cF$, $j_!(\cF^\vee)$ are  constructible sheaves that are ULA relative to $f:\cZ\to S$.  
 	In particular, if $S$ has only finitely many irreducible components and $d $ denotes the relative dimension of $\cU\to S$, then  $j_{!*/S}\cF[d ]$ is a well defined object in $ \Perv^{\ULA}(\cZ/S)$, whose formation commutes with base-changes. 
 \end{lemme}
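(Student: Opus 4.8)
The plan is to reduce the ULA statement to a local computation on $\widetilde{\cZ}$, where the situation is a tamely ramified local system on the complement of a relative strict normal crossings divisor, and then invoke the Abhyankar-type description of the tame fundamental group together with known ULA results for such pushforwards. First I would observe that, since $h:\widetilde{\cU}\to\cU$ is finite étale and surjective, the adjunction makes $\cF$ a direct summand of $h_*h^*\cF$; moreover $j_!$ commutes with finite pushforward in the obvious way (using $g\circ\widetilde{j}=j\circ h$ and that $g$ is proper, so $Rg_*$ preserves $D^\ULA$ by the relative proper base change / ULA-preservation results cited from \cite{HansenScholze} and \cite{Barrett}). Concretely, $Rg_*(\widetilde{j}_!h^*\cF)$ has $j_!\cF$ as a direct summand up to the $g$-pushforward, and ULA is stable under direct summands and under $Rg_*$ for $g$ proper; so it suffices to prove that $\widetilde{j}_!\cG$ is ULA relative to $\widetilde{\cZ}\to S$ for $\cG:=h^*\cF$ a local system on $\widetilde{\cU}$ that is tamely ramified along the relative snc divisor $\widetilde{\cD}$. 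The same argument applies verbatim to $\cF^\vee$, since $h^*(\cF^\vee)=(h^*\cF)^\vee$ is again tame along $\widetilde{\cD}$.

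Next I would prove the ULA property for $\widetilde{j}_!\cG$ in the relative snc / tame setting. The key point is that ULA is étale-local on the source and can be checked on the divisor's strata; by the local structure of a relative snc divisor, we may assume Zariski-locally that $\widetilde{\cZ}\to S$ looks like $\A^n_S\to S$ with $\widetilde{\cD}$ the union of the first $c$ coordinate hyperplanes, and that $\cG$ is, after a further finite étale tame cover realized by an explicit Kummer covering $t_i\mapsto t_i^{m}$ with $m$ invertible on $S$, the pullback of a local system extending to the total space. For such "relative Kummer sheaves" the ULA property relative to $S$ is standard: nearby cycles in the $S$-direction are computed fiberwise, and the tame monodromy filtration is locally constant in $S$ because the covering and the stratification are defined over $S$. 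Equivalently, one can cite that $j_!$ of a tame sheaf on the complement of a relative snc divisor is universally locally acyclic — this is exactly the content of the ULA stability statements used throughout \cite{HansenScholze} combined with the relative semistable/tame reduction picture; I would phrase it via: $\widetilde{j}_!\cG$ is the $!$-pushforward from a relatively smooth open, its formation commutes with arbitrary base change $S'\to S$ (this is automatic for $j_!$), and its stalks along $\widetilde{\cD}$ are the tame invariants, which are compatible with passing to fibers precisely because of tameness. That compatibility with base change plus constructibility in each fiber is one of the working characterizations of ULA in \cite[\S3]{HansenScholze}, so the claim follows.

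Finally, granting that $\widetilde{j}_!\cG$ and $\widetilde{j}_!(\cG^\vee)$ are ULA, I would push down along $g$: since $g$ is proper, $Rg_*$ carries $D^\ULA(\widetilde{\cZ}/S)$ into $D^\ULA(\cZ/S)$, and unwinding the direct-summand relation above shows $j_!\cF$ and $j_!(\cF^\vee)$ are direct summands of ULA complexes, hence themselves lie in $D^\ULA(\cZ/S)$. Constructibility is clear since all the morphisms are of finite presentation. For the last sentence: once $j_!\cF,\ j_!(\cF^\vee)\in D^\ULA(\cZ/S)$ and $S$ has finitely many irreducible components, \cite[Theorem 6.7]{HansenScholze} gives the relative perverse $t$-structure on $D^\ULA(\cZ/S)$, so ${}^{p/S}H^0(j_!\cF[d])$ and ${}^{p/S}H^0(D_{\cZ/S}j_!D_{\cU/S}\cF[d])$ are defined in $\Perv^\ULA(\cZ/S)$, and $j_{!*/S}\cF[d]$ is their image — a well-defined object of $\Perv^\ULA(\cZ/S)$; its compatibility with base change is exactly observation (2) preceding the lemma, applicable now that the ULA hypotheses on $j_!\cF$, $j_!D_{\cU/S}\cF$ are verified.

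The main obstacle I expect is the second paragraph: cleanly reducing the relative tame snc case to a base-change-compatible statement. One must make sure that "tamely ramified along $\widetilde{\cD}$" really does force the nearby-cycle behavior in the $S$-direction to be trivial (no vanishing cycles), which is where the relative snc hypothesis and tameness are both essential; the honest way to do this is to invoke the semistable reduction picture / Abhyankar's lemma to trivialize the ramification by a Kummer cover defined over $S$, and then quote the ULA stability of $j_!$ for the resulting relatively "unramified" situation. Getting the bookkeeping of the finite étale descent through $h$ right — in particular that taking a direct summand commutes with all the functors involved — is routine but is the step where an earlier version's gap (acknowledged to B. Zurbuchen) presumably lurked, so I would be careful to phrase the summand argument intrinsically rather than via an explicit idempotent.
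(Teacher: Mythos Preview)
Your strategy is exactly the paper's: show $\widetilde{j}_!(h^*\cF)$ is ULA over $S$, push forward along the proper $g$, and recover $j_!\cF$ as a direct summand via the splitting $\cF\hookrightarrow h_*h^*\cF$ (the paper makes this splitting explicit through the projection formula $h_*h^*\cF\simeq\cF\otimes h_*\overline{\Q}_{\ell,\widetilde{\cU}}$). For the key step --- ULA of $\widetilde{j}_!\cG$ with $\cG$ tame along a relative snc divisor --- the paper does not argue via Kummer covers but simply cites \cite[Lemma~3.14]{saito2013wild}; this is the clean reference replacing your second paragraph and resolves the obstacle you flagged. One slip to correct in your sketch: the stalks of $\widetilde{j}_!\cG$ along $\widetilde{\cD}$ are zero, not the tame invariants (that description applies to $R\widetilde{j}_*\cG$), so the heuristic ``stalks are base-change compatible'' is not the right formulation here; Saito's lemma gives the local acyclicity directly.
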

 
 \begin{proof} By \cite[\href{https://stacks.math.columbia.edu/tag/0818}{Tag 0818}]{stacks-project},  $j:\cU\hookrightarrow \cZ$ is of finite presentation. From \cite[Proposition 1.4.4]{laumon1981semi}, and as $\cU\to S$ is smooth,   $\cF\in D^{\ULA}(\cU/S)$. So, since
 	$D_{\cU/S}(\cF[d])=\cF^\vee[d]$, the second part of Lemma \ref{j!ULA}   follows from the first part. 
 
The sheaves $j_!\cF$ and $\widetilde{j}_!h^*\cF$ are constructible. 
 	 From \cite[Lemma 3.14]{saito2013wild}, 
	and as $h^*\cF$ is tamely ramified along $ \widetilde{\cD}$, $\widetilde{j}_!h^*\cF$ is ULA relative to $ \widetilde{\cZ}\to S$. From \cite[p.643]{HansenScholze}, and as $g: \widetilde{\cZ}\to \cZ$ is proper, $Rg_*\widetilde{j}_!h^*\cF\in D^{\ULA}(\cZ/S)$. Hence  $j_!h_*h^*\cF\simeq Rg_*\widetilde{j}_!h^*\cF\in D^{\ULA}(\cZ/S)$.
 	
 	 Since $h:\widetilde{\cU}\to \cU$ is finite étale, the natural morphism $\overline{\Q}_{\ell,\cU}\to h_*\overline{\Q}_{\ell,\widetilde{\cU}}$ of lisse sheaves is the inclusion of a direct summand. By the projection formula \cite[\href{https://stacks.math.columbia.edu/tag/0F0G}{Tag 0F0G}]{stacks-project}, as $h:\widetilde{\cU}\to \cU$ is proper, one has $h_*(h^*\cF)=\cF\otimes_{\overline{\Q}_\ell} h_*\overline{\Q}_{\ell,\widetilde{\cU}}$.  Thus, $\cF\hookrightarrow h_*h^*\cF$ and hence $j_!\cF\hookrightarrow j_!h_*h^*\cF$ are  direct summands. Since universal local acyclicity is preserved
 	under  passing to direct summands, one has $j_!\cF\in D^{\ULA}(\cZ/S)$. 
 	As $h^*\cF^\vee$ is also tamely ramified along $\widetilde{\cD}$, one has  $j_!\cF^\vee\in D^{\ULA}(\cZ/S)$. 
 	
 \end{proof}
\textit{}\\
\paragraph{Proof of Theorem \ref{MTBis}} 

From \cite[Thm. 4.3.1 (ii)]{BBD}, for every $i=1,\dots, r$,  there exists an integral closed subscheme $\iota_i: Z_i \hookrightarrow \cX_{\bar \eta}$ and a non-empty open subscheme $j_i:U_i \hookrightarrow Z_i$,   smooth over $k(\bar \eta)$ and pure of dimension $d_i$, and a simple object  $\cF_i$ in $\Loc(U_i)$ such that $\cP_i=\iota_{i,*}j_{i,!*}\cF_i[d_i]$. Fix also a $k(\bar \eta)$-point $u_i\in U_i$ and a smooth normal crossing compactification $U_i\hookrightarrow U^{\cpt}_i$. There exists a finite field extension $K_0$ of $k(\eta)$ such that, for every $i=1,\dots, r$,   $$\xymatrix{U^{\cpt}_i&U_i\ar@{_{(}->}[l] \ar@{^{(}->}[r]^{j_i}&Z_i \ar@{^{(}->}[r]^{\iota_i}& \cX_{\bar \eta}\ar[r]& \spec(k(\bar \eta))\ar@/_1.5pc/[lll]_{u_i}}$$ is defined over $K_0$ and spread out as 
  $$\xymatrix{U_i^{\cpt}\ar@{}[dr]|\square\ar[d]&U_i\ar@{^{(}->}[r]^{j_i}\ar[d]\ar@{}[dr]|\square\ar@{_{(}->}[l]&Z_i\ar@{^{(}->}[r]^{\iota_i}\ar[d]\ar@{}[dr]|\square&\cX_{\eta_0}\ar[d]\ar[r]&\spec(K_0)\ar[d]^{\eta_0}\ar@/_1.5pc/[lll]_{u_i}\\
\cU_i^{\cpt}& \cU_i\ar@{^{(}->}[r]^{j_i}\ar@{_{(}->}[l]&\cZ_i\ar@{^{(}->}[r]^{\iota_i}&\cX\times_SS_0\ar[r]&S_0\ar@/^1.5pc/[lll]^{u_i}}$$
with  
 \begin{itemize}[leftmargin=*, parsep=0cm,itemsep=0.2cm,topsep=0.2cm, label=-]
 \item $\iota_i: \cZ_i\hookrightarrow \cX\times_SS_0$  a closed immersion and  $\cZ_i\rightarrow S_0$  geometrically irreducible;
 \item $j_i:\cU_i\hookrightarrow \cZ_i$ an open immersion and $\cU_i\rightarrow S_0$ is smooth, pure of relative dimension $d_i$;
 \item  $\cU_i\hookrightarrow \cU_i^{\cpt}$ is a relative smooth normal crossing compactification over $S_0$,
 \end{itemize}  
where $S_0\subset \widetilde{S}_0$ is a non-empty open in the normalization $ \widetilde{S}_0\rightarrow S$ of $S$ in  $\spec(K_0)\rightarrow \spec(k(\eta))\stackrel{\eta}{\rightarrow}S$. 

By Lemma \ref{Hironaka}, as $\mathrm{char} (k)=0$, shrinking $S_0$ if necessary,  one may furthermore assume  that  there is a diagram \begin{center}
	$$\xymatrix{
		\widetilde{\cU}_i  \ar@{^{(}->}[r]      \ar[d] \ar@{}[dr]|\square& \widetilde{\cZ}_i \ar[d] \\
		\cU_i  \ar@{^{(}->}[r]^{j_i}                             & \cZ_i \ar[r] &  S_0}$$
	 
\end{center}satisfying the conditions of Lemma \ref{j!ULA}. Then the diagram  base changed along a witness $S'\to S_0$ also satisfies the conditions of Lemma \ref{j!ULA}. From $\mathrm{char} (k)=0$, the tame ramification condition holds.

As the image of every non-empty open subscheme of $S_0$ contains a non-empty open subscheme of $S$ and as, for every $s_0\in S_0$ with image $s\in S$ any witness $(S_0',\eta_0',s_0')\rightarrow (S_0,\eta_0,s_0)$ induces a witness $(S_0',\eta_0',s_0')\rightarrow (S_0,\eta_0,s_0)\rightarrow (S,\eta,s)$, one may freely replace  $S$ with $S_0$  so that we remove the subscripts $(-)_0$ from the notation. Let now $s\in S$  and fix a witness   $(S' ,\eta',s')\rightarrow (S,\overline{\eta},s)$. By invariance of \'etale fundamental group under extensions of algebraically closed fields in characteristic $0$, one has $\pi_1(\cU_{i,\eta'})\tilde{\rightarrow}\pi_1(U_i)$ hence $\cF_i|_{\cU_{i,\eta'}}$ is again irreducible.  As $S'$ is strictly henselian,  $\pi_1(S')=1$  and as $\cU_i':=\cU_i\times_SS'\rightarrow S'$ has a section, the canonical morphisms 
 $$ \pi_1(\cU'_{i,\eta'})\tilde{\rightarrow} \pi_1(\cU'_i) \tilde{\leftarrow} \pi_1(\cU'_{i,s'})$$
 are both isomorphisms  \cite[XIII, 4.3, 4.4]{SGA1}. In particular,   $\cF_i|_{\cU_{i,\eta'}}$  extends uniquely to an object   $\cF_i'$ in $\Loc(\cU_i') $, and  $\cF_i'|_{\cU'_{i,s'}}$ is simple in $\Loc(\cU'_{i,s'})$.  From \cite[Thm. 4.3.1 (ii)]{BBD}, it is thus enough to show that 
 \begin{equation}\label{Rpsi !*}R\psi_{f'}(\cP_i|_{\cX_{\eta'}})\simeq \iota_{i,s'*}j_{i,s'!*}(\cF'_i|_{\cU'_{i,s'}}[d_i]).\end{equation}
 
 Consider the commutative diagram \begin{center}
 $$	\xymatrix{
 		\cU'_i \ar@{^{(}->}[r]^{j_i'}       \ar[d] \ar@{}[dr]|\square & \cZ'_i \ar@{^{(}->}[r]^{\iota_i'}    \ar[d] \ar@{}[dr]|\square& \cX' \ar[r]^{f'}\ar[d] \ar@{}[dr]|\square& S' \ar[d] \\
 		\cU_i \ar@{^{(}->}[r]^{j_i}                                                 & \cZ_i\ar@{^{(}->}[r]^{\iota_i}                                                    & \cX \ar[r]^f                                       & S          }$$
 \end{center} of schemes with cartesian squares. By Lemma \ref{j!ULA}, $j'_{i,!*/S'}\cF'_i[d_i]\in \Perv^{\ULA}(\cZ'_i/S')$. Therefore, $\cK:=\iota'_{i*}j'_{i,!*/S'}\cF'_i[d_i]$ is in $\Perv^{\ULA}(\cX'/S')$. By Observation (2) in Paragraph \ref{!* is ULA} and the proper base change theorem, as $\iota'_i:\cZ'_i\hookrightarrow \cX'$ is proper, $\cK|_{\cX_{\eta'}}$ is the pullback of $\cP_i$ along $\cX_{\eta'}\to \cX_{\bar\eta}$, and  $\cK|_{\cX_{s'}}$ is $\iota_{i,s'*}j_{i,s'!*}(\cF'|_{\cU'_{i,s'}}[d_i])$, which proves \eqref{Rpsi !*}.

 \begin{lemme}\label{Hironaka}
 Let $S$ be an irreducible scheme with generic point $\eta$. Assume that $k(\eta)$ is of characteristic $0$. Let $f:\cZ\to S$ be a morphism separated of finite presentation, with $\cZ_{\eta}$ integral. Let $U \hookrightarrow \cZ_{\eta}$ be an  open subset smooth over $k(\eta)$. Then up to shrinking $S$ to an affine open subset, there is a diagram \eqref{relative alteration} satisfying the conditions of Lemma \ref{j!ULA},  such that $\cU_{\eta}=U$ and $h:\widetilde{\cU}\to \cU$ is an isomorphism. \end{lemme}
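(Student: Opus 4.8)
The plan is to build the desired diagram over the generic point $\eta$ by resolution of singularities, and then to spread it out over a non-empty affine open of $S$.

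First I would work over $\eta$; we may assume $U\neq\emptyset$ (otherwise take $\widetilde{\cZ}=\cU=\emptyset$). Since $\cZ_\eta$ is integral and $\mathrm{char}(k(\eta))=0$, Hironaka's theorem provides a projective birational morphism $g_\eta\colon\widetilde{Z}_\eta\to\cZ_\eta$ with $\widetilde{Z}_\eta$ smooth over $k(\eta)$, an isomorphism over the smooth locus of $\cZ_\eta$ — in particular over $U$ — and such that $\widetilde{D}_\eta:=\widetilde{Z}_\eta\setminus U$ (with its reduced structure) is a divisor with strict normal crossings: one first resolves $\cZ_\eta$ to a smooth modification which is an isomorphism over the smooth locus, and then performs embedded resolution / principalization of the boundary by blow-ups with centres disjoint from $U$, which does not alter $U$. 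Writing $\widetilde{D}_\eta=\bigcup_{a\in A}\widetilde{D}_{\eta,a}$ for the decomposition into irreducible components, strictness says that each $\widetilde{D}_{\eta,a}$ is smooth over $k(\eta)$ and that, for every $B\subseteq A$, the scheme-theoretic intersection $\widetilde{D}_{\eta,B}:=\bigcap_{a\in B}\widetilde{D}_{\eta,a}$ is smooth over $k(\eta)$ of pure codimension $|B|$ in $\widetilde{Z}_\eta$.

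Next I would spread everything out. All of this data is of finite presentation, so by the standard limit formalism (EGA IV, \S\S 8, 11, 17) there are a non-empty affine open $S_0\subseteq S$ and, over $S_0$: an open $\cU\subseteq\cZ\times_SS_0$ with $\cU_\eta=U$; a projective morphism $g\colon\widetilde{\cZ}\to\cZ\times_SS_0$ restricting to $g_\eta$ over $\eta$; and closed subschemes $\widetilde{\cD}_a\subseteq\widetilde{\cZ}$ restricting to $\widetilde{D}_{\eta,a}$. The conditions ``$g$ is an isomorphism over $\cU$'', ``$\widetilde{\cZ}\setminus\bigcup_{a}\widetilde{\cD}_a=\cU$'', ``$\cU\to S_0$ is smooth of pure relative dimension $\dim U$'', ``$\widetilde{\cZ}\to S_0$ is smooth'', ``each $\widetilde{\cD}_a\to S_0$ is smooth'', and ``each $\bigcap_{a\in B}\widetilde{\cD}_a\to S_0$ is smooth of pure codimension $|B|$ in $\widetilde{\cZ}$'' all hold on the generic fibre and are open (or constructible) conditions on $S_0$ — using that the non-smooth locus of a finitely presented morphism is closed with constructible image avoiding $\eta$, and generic flatness — so after finitely many further shrinkings of $S_0$ they all hold. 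I would then set $\widetilde{\cU}:=\cU$ and $h:=\mathrm{id}_{\cU}$: since $g^{-1}(\cU)=\cU$, the square with vertices $\widetilde{\cU},\widetilde{\cZ},\cU,\cZ\times_SS_0$ is cartesian, and $h$ is finite étale and an isomorphism. Finally, by the fibrewise criterion for relative strict normal crossings — for $\widetilde{\cZ}$ smooth over $S_0$ it suffices that $\widetilde{\cD}=\bigcup_a\widetilde{\cD}_a$ with each $\widetilde{\cD}_a$ and each intersection $\bigcap_{a\in B}\widetilde{\cD}_a$ smooth over $S_0$ of the expected codimension — the closed subscheme $\widetilde{\cD}:=\widetilde{\cZ}\setminus\widetilde{\cU}$ is a divisor with strict normal crossings relative to $\widetilde{\cZ}\to S_0$. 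Renaming $S_0$ to $S$, the resulting diagram has the shape of \eqref{relative alteration}, satisfies all the hypotheses of Lemma \ref{j!ULA}, and has $\cU_\eta=U$ with $h$ an isomorphism, as required.

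I expect the only genuinely delicate point to be the last one: spreading out the \emph{relative} strict-normal-crossings property, so that the boundary $\widetilde{\cD}$ is fibrewise (and not merely generically) a strict normal crossings divisor, compatibly with the smoothness of $\widetilde{\cZ}\to S$. This is handled by spreading out the irreducible components of the boundary together with all of their mutual intersections and invoking the fibrewise criterion, but it must be carried out with some care. Everything else — resolution of singularities over a field of characteristic $0$, and descent of finite-presentation properties to an open of the base — is routine.
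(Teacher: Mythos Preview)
Your proposal is correct and follows essentially the same approach as the paper: apply Hironaka's resolution over the generic fibre to obtain a proper modification $\widetilde{Z}\to\cZ_\eta$ that is smooth, an isomorphism over $U$, and has strict normal crossings boundary, then spread out. The paper compresses both steps into two sentences, while you spell out the spreading-out of the relative strict-normal-crossings condition (via the components and their intersections) in more detail than the paper does; this is a genuine elaboration rather than a different argument.
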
\begin{proof}
By Hironaka's resolution of singularities (see, e.g., \cite[I, 3.1.5 b) a)]{SGA5}), as $k(\eta)$ is of characteristic $0$, $\cZ_{\eta}$ is strongly desingularizable. As $\cZ_{\eta}$ is integral, there is a proper morphism $ \widetilde{Z}\to \cZ_{\eta}$ with $\widetilde{Z}$ smooth over $k(\eta)$, such that the pullback  $\widetilde{U}:= U\times_{ \cZ_\eta}\widetilde{Z}\rightarrow U $ is an isomorphism, and that $\widetilde{Z}\setminus \widetilde{U}$ is a strict normal crossing divisor. The result follows by spreading out.
\end{proof}
 
  \begin{remarque}\label{semistable RPsi}\textnormal{The nearby cycles functor may not preserve simplicity of perverse sheaves nor commute with  middle extension.
  	Let $S$ be the spectrum of a strictly Henselian discrete valuation ring with generic point $\eta$ and closed point $s$.  Let $f:\cX\to S$ be a proper semi-stable morphism with geometrically integral fibers of dimension $d$. Assume that the special fiber $\cX_s\hookrightarrow \cX $ is a strict normal crossing divisor on $\cX$. Then there is an open subset $j:\cU\hookrightarrow \cX$ smooth over $S$, such that $\cU_{\eta}=\cX_{\eta}$ and that $\cU_s$ is Zariski-dense in $\cX_s$.   	Let $R\psi_f:D_c^b(\cX_{\bar \eta})\to D_c^b(\cX_s)$ be the nearby cycles functor.   	By \cite[Théorème 3.2 (c) (i)]{illusie1994autour}, 
 $H^0R\psi_f(\overline{\Q}_{\ell,\cX_{\bar \eta}})\simeq \overline{\Q}_{\ell,\cX_s}$. Let $j_s:\cU_s\hookrightarrow \cX_s$ be the pullback of $j:\cU\hookrightarrow \cX$ along $s\to S$. Let $\IC_{\cX_s}:=j_{s,!*}\overline{\Q}_{\ell,\cU_s}[d]$ be the intersection cohomology complex on $\cX_s$. In general, $H^{-d}\IC_{\cX_s}$ is not constant, in which case the perverse sheaf  $R\psi_f(\overline{\Q}_{\ell,\cX_{\bar \eta}}[d])$ is not isomorphic to $\IC_{\cX_s}$. Also, from \cite[XV, Thm 2.1]{SGA4-3}, one has \[\left(R\psi_f(\overline{\Q}_{\ell,\cX_{\bar \eta}})\right)|_{\cU_s}=R\psi_{f\circ j}(\overline{\Q}_{\ell,\cX_{\bar \eta}})=\overline{\Q}_{\ell,\cU_s}.\]
  	Then by \cite[Théorème 4.3.1 (ii)]{BBD},  $R\psi_f(\overline{\Q}_{\ell,\cX_{\bar \eta}}[d])$ is not simple in $\Perv(\cX_s)$ (otherwise, it would be isomorphic to the simple object $\IC_{\cX_s}$) while $\overline{\Q}_{\ell,\cX_{\bar\eta}}[d]$ is simple in $\Perv(\cX_{\bar\eta})$.}
  \end{remarque}

 \subsection{Lifting semisimplicity}\label{Sec:LiftingSS} 
 \subsubsection{}Let $\cA$ be an artinian and noetherian abelian category, let $\iota:\cN\hookrightarrow\cA$ be a Serre subcategory and let $p:\cA\rightarrow \overline{\cA}:=\cA/\cN$ denote the resulting quotient functor. The inclusion functor  $\iota:\cN\hookrightarrow\cA$   admits both a right adjoint $(-)_{\neg}:\cN\rightarrow \cA$ ("maximal  subobject in $\cN$") and a left adjoint $(-)^{\neg}:\cN\rightarrow \cA$  ("maximal   quotient object in $\cN$"). Explicitly, for $A\in \cA$, $A_{\neg}=\sum_{N\in S_{\neg}(A)} N\hookrightarrow  A$, where $S_{\neg}(A)$ denotes the subset of all   subobjects of $A$ in $\cN$ and $A\twoheadrightarrow A^{\neg}=A/\cap_{S\in S^{\neg}(A)} S$, where $S^{\neg}(A)$ denotes the     subobjects $S$ of  $A$ such that $A/S\in\cN$. Then for every $A\in \cA$,
 $$A^*:=\ker(A/A_{\neg}\rightarrow (A/A_{\neg})^{\neg})$$
 is a subquotient of $A$ in $\cA$ satisfying  $(A^*)^{\neg}=(A^*)_{\neg}=0$ and $p(A^*)\simeq p(A)$ in $\overline{\cA}$. Observing that for every $A_1,A_2\in \cA$ with $A_1^{\neg}=0$ and $A_{2,\neg}=0$ the canonical morphism
 $$\SHom_{\cA}(A_1,A_2)\rightarrow \SHom_{\overline{\cA}}(p(A_1),p(A_2))$$
 is an isomorphism, one gets that for every $A_1,A_2\in \cA$,  
 $$\begin{tabular}[t]{l} $p(A_1)\simeq p(A_2)$ in $\overline{\cA}$ if and only if $A_1^*\simeq A_2^*$ in $\cA$.\end{tabular}$$
  \begin{lemme}\label{Lem:FormalSS}  Let $A\in \cA$ such that $p(A)$ is semisimple in $\overline{\cA}$. Then $A^*$ is semisimple in $\cA$ and 
  $$\hbox{\rm length}_{\cA}(A^*)=\hbox{\rm length}_{\overline{\cA}}(p(A)).$$
 \end{lemme}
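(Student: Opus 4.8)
The plan is to reduce the lemma to the equivalence ``$p(A_1)\simeq p(A_2)$ in $\overline{\cA}$ if and only if $A_1^*\simeq A_2^*$ in $\cA$'' recorded just above its statement, by producing an explicit \emph{semisimple} object of $\cA$ with the same image as $A$ under $p$.

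Concretely, I would first observe that $\overline{\cA}$ is again artinian and noetherian — every object has finite length by the identity $\hbox{\rm length}_{\overline{\cA}}(p(B))+\hbox{\rm length}_{\cA,\cN}(B)=\hbox{\rm length}_{\cA}(B)$ — and write $p(A)\simeq\bigoplus_{i=1}^{m}\bar S_i$ with each $\bar S_i$ simple in $\overline{\cA}$, so that $m=\hbox{\rm length}_{\overline{\cA}}(p(A))$. Next I would lift each $\bar S_i$ to a simple object of $\cA$ lying outside $\cN$: choosing $T_i\in\cA$ with $p(T_i)\simeq\bar S_i$ (possible since $p$ is essentially surjective), a Jordan--H\"older filtration of $T_i$ maps under the exact functor $p$ to a filtration of $\bar S_i$ whose subquotients are either $0$ (for the Jordan--H\"older factors of $T_i$ lying in $\cN$) or simple (for the others, by the recollection on $p$ and simple objects); as $\bar S_i$ is simple and nonzero, exactly one Jordan--H\"older factor $S_i\notin\cN$ of $T_i$ satisfies $p(S_i)\simeq\bar S_i$. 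Then I would put $\tilde A:=\bigoplus_{i=1}^{m}S_i$, which is semisimple of length $m$ and satisfies $p(\tilde A)\simeq\bigoplus_i\bar S_i\simeq p(A)$. Because $\cN$ is a Serre subcategory and $\tilde A$ is a finite direct sum of simple objects none of which lies in $\cN$, every subobject and every quotient of $\tilde A$ lying in $\cN$ must vanish, so $\tilde A_{\neg}=\tilde A^{\neg}=0$ and hence $\tilde A^*=\ker(\tilde A\to\tilde A^{\neg})=\tilde A$, while $(A^*)^*=A^*$ since $(A^*)_{\neg}=(A^*)^{\neg}=0$. Applying the displayed equivalence to $A_1:=A^*$ and $A_2:=\tilde A$, which have the same image under $p$, yields $A^*=(A^*)^*\simeq\tilde A^*=\tilde A$, so $A^*$ is semisimple and $\hbox{\rm length}_{\cA}(A^*)=m=\hbox{\rm length}_{\overline{\cA}}(p(A))$.

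I do not expect a genuine obstacle here: the length identity, the fact that $p$ sends simple objects not in $\cN$ to simple objects, and the $(-)^*$--equivalence are all already established in the excerpt. The only point requiring a little care is the lifting of a simple object of $\overline{\cA}$ to a simple object of $\cA$ outside $\cN$, which the Jordan--H\"older argument above settles; in particular one should resist the temptation to shortcut the proof by claiming that $A^*$ has no composition factor in $\cN$ merely from $(A^*)_{\neg}=(A^*)^{\neg}=0$, since that fails in general without the semisimplicity hypothesis on $p(A)$.
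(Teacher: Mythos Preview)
Your proof is correct and follows essentially the same strategy as the paper: construct a semisimple object $\tilde A\in\cA$ with $p(\tilde A)\simeq p(A)$, check that $\tilde A^*=\tilde A$, and invoke the equivalence ``$p(A_1)\simeq p(A_2)\Leftrightarrow A_1^*\simeq A_2^*$''. The only cosmetic difference is in how the simple lifts are produced: you extract each $S_i$ as a Jordan--H\"older factor of an arbitrary lift $T_i$, whereas the paper first proves the simple case of the lemma (if $p(A)$ is simple then $A^*$ is simple) and then takes $S_i^*$ for arbitrary lifts $S_i$; both routes yield the same object $\tilde A$.
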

 \begin{proof} Assume first $p(A)$ is  simple in $\overline{\cA}$. As $p(A^*)\simeq p(A)$ in $\overline{A}$, $A^*$  has a single   Jordan-H\"{o}lder factor in $\cA$ which is not in $\cN$. But as $(A^*)_{\neg}=(A^*)^{\neg}=0$, this forces $A^*$ to be simple in $\cA$. In general,  let $\overline{S}_1,\dots, \overline{S}_r$ denote the simple factors (counted with multiplicities) of $p(A)$ in $\overline{\cA}$ and let $S_1,\dots, S_r\in \cA$ with $p(S_i)\simeq \overline{S}_i$ in $\overline{\cA}$, $i=1,\dots, r$. Set $$S:=S_1^*\oplus\cdots \oplus S_r^*.$$
 Then, $S$ is  semisimple   in $\cA$,   $S=S^*$, and   $p(S)\simeq p(A)$ in $\overline{\cA}$. This shows $A^*\simeq (S^*\simeq) S$ is semisimple in $\cA$ and 
 $$\hbox{\rm length}_{\cA}(A^*)=\hbox{\rm length}_{\cA}(S)=r= \hbox{\rm length}_{\overline{\cA}}(p(A)).$$
 \end{proof}

 \subsubsection{}If $X$ is an abelian variety over a field $K$ of characteristic $0$,  write $$(-)_{\neg}:\Perv(X)\rightarrow N(X)\cap \Perv(X),\;\;   (-)^{\neg}:\Perv(X)\rightarrow N(X)\cap \Perv(X)$$ for the  right adjoint ("maximal negligible subobject") and left adjoint ("maximal negligible quotient object")  of  the inclusion functor 
 $$ N(X)\cap \Perv(X)\hookrightarrow  \Perv(X)$$ respectively. By Galois descent \cite[Lem. A.6]{Timo}, for every $\cP\in \Perv(X)$, $(\cP_{\neg})|_{X_{\bar K}}= (\cP|_{X_{\bar K}})_{\neg}\hookrightarrow \cP|_{X_{\bar K}}$ and $\cP|_{X_{\bar K}}\twoheadrightarrow (\cP^{\neg})|_{X_{\bar K}}= (\cP|_{X_{\bar K}})^{\neg}$; in particular
 
 \begin{equation}\label{Diag:Commute1}(-)^*\circ -|_{X_{ \bar K}}\simeq  -|_{X_{\bar K}}\circ (-)^*:  \Perv (X)\rightarrow  \Perv(X_{\bar K}).\end{equation}

 \noindent If $f:\cX\rightarrow S$ is an abelian scheme, write again \begin{gather*}
 	(-)_{\neg}:\Perv^{\ULA}(\cX/S)\rightarrow N^{\ULA}(\cX/S)\cap \Perv^{\ULA}(\cX/S),\\
 	 (-)^{\neg}:\Perv^{\ULA}(\cX/S)\rightarrow N^{\ULA}(\cX/S)\cap \Perv^{\ULA}(\cX/S) \end{gather*}
 for the right adjoint and left adjoint of the inclusion functor
 $$N^{\ULA}(\cX/S)\cap \Perv^{\ULA}(\cX/S) \hookrightarrow   \Perv^{\ULA}(\cX/S) $$
 respectively.   Furthermore, as for every  $s\in S$, 
 $$ N^{\ULA}(\cX/S)\cap \Perv^{\ULA}(\cX/S)=\ker( \Perv^{\ULA}(\cX/S)\rightarrow P(\cX_{\bar s})), $$ 
and, for $s=\eta$, $-|_{\cX_\eta}: \Perv^{\ULA}(\cX/S)\rightarrow \Perv(\cX_\eta)$ is fully faithful with essential image stable under subquotients, one has

 \begin{equation}\label{Diag:Commute2}(-)^*\circ -|_{\cX_{ \eta}}\simeq  -|_{\cX_{  \eta}}\circ (-)^*:  \Perv^{\ULA}(\cX/S)\rightarrow  \Perv(\cX_{ \eta}).\end{equation}
 
\noindent Combining (\ref{Diag:Commute1}),  (\ref{Diag:Commute2}) one gets 

$$(-)^*\circ -|_{\cX_{ \bar \eta}}\simeq  -|_{\cX_{ \bar  \eta}}\circ (-)^*:  \Perv^{\ULA}(\cX/S)\rightarrow  \Perv(\cX_{\bar  \eta}).$$
 
\noindent This observation together  with Lemma \ref{Lem:FormalSS} yields the following result.
 \begin{corollaire}\label{Cor:LiftingSS}  Let   $f:\cX\rightarrow S$ an abelian scheme.  Let $\cP\in \Perv^{\ULA}(\cX/S)$. 
 Assume that $\cP|_{\cX_{\bar \eta}}$ is semisimple in $P(\cX_{\bar \eta})$.    Then   $\cP^*|_{\cX_{\bar \eta}}$ is  semisimple in $\Perv(\cX_{\bar \eta})$ with
  $$\hbox{\rm length}_{\hbox{\rm \small  Perv}(\cX_{\bar \eta})}(\cP^*|_{\cX_{\bar \eta}})=\hbox{\rm length}_{P(\cX_{\bar \eta})}(\cP|_{\cX_{\bar \eta}}).$$
   and, for every $s\in S$, $\cP^*|_{\cX_{\bar s}}\simeq \cP|_{\cX_{\bar s}}$ in  $P(\cX_{\bar s})$.
 \end{corollaire}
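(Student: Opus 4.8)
The plan is to obtain Corollary~\ref{Cor:LiftingSS} as a formal consequence of Lemma~\ref{Lem:FormalSS} together with the commutation identity $(-)^*\circ -|_{\cX_{\bar \eta}}\simeq -|_{\cX_{\bar \eta}}\circ (-)^*:\Perv^{\ULA}(\cX/S)\to\Perv(\cX_{\bar \eta})$ obtained just above by combining \eqref{Diag:Commute1} and \eqref{Diag:Commute2}. The only real work is bookkeeping: the semisimplicity-and-length assertion will be proved by running the $(-)^*$ construction inside $\Perv(\cX_{\bar \eta})$, whereas the fibrewise isomorphism assertion will be proved by running it inside $\Perv^{\ULA}(\cX/S)$, the two being linked by that identity.

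For the first assertion, I would apply Lemma~\ref{Lem:FormalSS} to the artinian and noetherian abelian category $\cA=\Perv(\cX_{\bar \eta})$, its Serre subcategory $\cN=N(\cX_{\bar \eta})\cap\Perv(\cX_{\bar \eta})$ of negligible objects, the quotient functor $p:\cA\to\overline{\cA}=P(\cX_{\bar \eta})$, and $A=\cP|_{\cX_{\bar \eta}}$. Since $p(A)=\cP|_{\cX_{\bar \eta}}$ is semisimple in $P(\cX_{\bar \eta})$ by hypothesis, the lemma yields that $(\cP|_{\cX_{\bar \eta}})^*$ is semisimple in $\Perv(\cX_{\bar \eta})$ and that $\hbox{\rm length}_{\Perv(\cX_{\bar \eta})}((\cP|_{\cX_{\bar \eta}})^*)=\hbox{\rm length}_{P(\cX_{\bar \eta})}(\cP|_{\cX_{\bar \eta}})$. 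The commutation identity above identifies $(\cP|_{\cX_{\bar \eta}})^*$ with $\cP^*|_{\cX_{\bar \eta}}$, and the first part of the corollary follows.

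For the second assertion, I would use the general fact, built into the construction of $(-)^*$ at the start of Section~\ref{Sec:LiftingSS}, that for any artinian and noetherian abelian category $\cA$ with Serre subcategory $\cN$ and quotient $p:\cA\to\overline{\cA}$ one has a canonical isomorphism $p(A^*)\simeq p(A)$ in $\overline{\cA}$ for every $A\in\cA$. Applying it with $\cA=\Perv^{\ULA}(\cX/S)$, $\cN=N^{\ULA}(\cX/S)\cap\Perv^{\ULA}(\cX/S)$ (the kernel of $\Perv^{\ULA}(\cX/S)\to P^{\ULA}(\cX/S)$) and $A=\cP$ gives $\cP^*\simeq\cP$ in $P^{\ULA}(\cX/S)$. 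Then, for a fixed $s\in S$, applying the exact tensor functor $-|_{\cX_s}:P^{\ULA}(\cX/S)\to P(\cX_s)$ of \eqref{Eq:1} followed by the restriction $-|_{\cX_{\bar s}}:P(\cX_s)\to P(\cX_{\bar s})$ transports this isomorphism to $\cP^*|_{\cX_{\bar s}}\simeq\cP|_{\cX_{\bar s}}$ in $P(\cX_{\bar s})$, which is what is claimed.

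Both steps only invoke facts already in place, so I do not expect a genuine obstacle. The one point deserving care is that the identification $N^{\ULA}(\cX/S)\cap\Perv^{\ULA}(\cX/S)=\ker(\Perv^{\ULA}(\cX/S)\to P(\cX_{\bar s}))$ holds \emph{uniformly in $s$} by \eqref{Diag:DefN}, so that a single object $\cP^*\in\Perv^{\ULA}(\cX/S)$ governs all fibres at once, whereas semisimplicity and the length count are genuinely statements about $\Perv(\cX_{\bar \eta})$, where the negligible simple constituents actually appear; this is why the two assertions are established in two different categories and then glued via the commutation identity.
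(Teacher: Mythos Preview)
Your proposal is correct and matches the paper's intended argument: the paper's proof is literally the one-line ``This observation together with Lemma~\ref{Lem:FormalSS} yields the following result,'' and you have simply unpacked that sentence, applying Lemma~\ref{Lem:FormalSS} in $\Perv(\cX_{\bar\eta})$ for the semisimplicity and length statement, and using the general fact $p(A^*)\simeq p(A)$ in $P^{\ULA}(\cX/S)$ together with restriction to fibres for the second assertion, glued via the commutation identity.
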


 \subsection{Proof of  Corollary \ref{MC2}}\label{Sec:ProofMC2}
Let   $f:\cX\rightarrow S$ an abelian scheme.  Let $\cP\in \Perv^{\ULA}(\cX/S)$ such that  $\cP|_{\cX_{\bar \eta}}$ is semisimple in $P(\cX_{\bar \eta})$.    From Corollary \ref{Cor:LiftingSS}, up to replacing $\cP$ with $\cP^*$, one may assume $\cP|_{\cX_{\bar \eta}}$ is semisimple in $\Perv(\cX_{\bar \eta})$ and, 
from   Theorem \ref{MT}, up to replacing $S$ by a non-empty open subscheme,  one may assume    that  for every $s\in S$,  $\cP|_{\cX_{\bar s}}$ is semisimple     in $\Perv(\cX_{\bar s})$. This reduces the proof of  Corollary  \ref{MC2} (2) to the one of  Corollary  \ref{MC2} (1). Under the assumptions of   Corollary  \ref{MC2} (1), $G (\cP|_{\cX_{\bar \eta}})$ is a reductive group and, for every $s\in S$,  $G (\cP|_{\cX_{\bar s}})\subset G (\cP|_{\cX_{\bar \eta}})$ is a closed reductive subgroup.   Recall that \cite[Prop. 3.1 (c)]{DeligneHC}  for a reductive group $G$ over a field $Q$ of characteristic $0$, a finite-dimensional $Q$-rational faithful representation $V$ of $G$ and a   closed reductive subgroup $H\subset G$,   one has 
$$H=\hbox{\rm Fix}_G(u_H)\subset G,$$ for some  integers $m,n\geq 0$ and $0\not= u_H\in T^{m,n}(V)$. In particular,   $H\subsetneq G$ if and only if 
 
$$\Sdim_Q(I^{m,n}(V))<\Sdim_Q(I^{m,n}(V|_H)),$$ for some  integers $m,n\geq 0$.\\

 \noindent This reduces the proof of Corollary \ref{MC2} (1) to the following.

\begin{corollaire}\label{Cor:MC2Bis} Let   $f:\cX\rightarrow S$ an abelian scheme.  Let $\cP\in \Perv^{\ULA}(\cX/S)$ such that  $\cP|_{\cX_{\bar s}}$ is semisimple in $\Perv(\cX_{\bar s})$ for  every $s\in S$. Then   for every integers $m,n\geq 0$, there exists  a strict  closed subscheme $S_{m,n}\hookrightarrow S$ such that for every   $s\in S$,  $$\Sdim_{\overline{\Q}_\ell}( I^{m,n}( \cP|_{\cX_{\bar \eta}}))<\Sdim_{\overline{\Q}_\ell}( I^{m,n}( \cP|_{\cX_{\bar s}}))$$   if and only if $ s\in S_{m,n}$.
\end{corollaire}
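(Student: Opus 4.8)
The plan is to reinterpret the quantities $\Sdim_{\overline{\Q}_\ell}(I^{m,n}(\cP|_{\cX_{\bar t}}))$ as ranks of the stalks of a suitable constructible sheaf on $S$ and then invoke the constructibility/semicontinuity machinery, together with Theorem \ref{MT} which already guarantees that, after shrinking $S$, the length (hence the ``shape'' of the semisimplification) of $\cP|_{\cX_{\bar t}}$ in $\Perv(\cX_{\bar t})$ is constant. First I would fix integers $m,n\geq 0$. By functoriality of the Tannakian construction with respect to pullback along $S'\to S$ (Section \ref{Sec:KWRel}), the object $T^{m,n}(\cP)\in\Perv^{\ULA}(\cX/S)$ restricts fiberwise to $T^{m,n}(\cP|_{\cX_{\bar t}})$, and the relevant datum $I^{m,n}(\cP|_{\cX_{\bar t}})$ is the sum of all subobjects isomorphic to the unit $\delta_{0}$ in $P(\cX_{\bar t})$; equivalently, by the defining property recalled in the Notation section, $\SHom_{P(\cX_{\bar t})}(\mathbb{I},I^{m,n}(\cP|_{\cX_{\bar t}}))\tilde{\rightarrow}\SHom_{P(\cX_{\bar t})}(\mathbb{I},T^{m,n}(\cP|_{\cX_{\bar t}}))$. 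So the real content is to understand how $\Sdim_{\overline{\Q}_\ell}\SHom_{P(\cX_{\bar t})}(\delta_{0},T^{m,n}(\cP)|_{\cX_{\bar t}})$ varies with $t$.

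The key step is a translation into cohomology. Writing $\cQ:=T^{m,n}(\cP)\in\Perv^{\ULA}(\cX/S)$, one has $\SHom_{D_c^b(\cX_{\bar t})}(\delta_{0},\cQ|_{\cX_{\bar t}})\simeq \cH^{0}((\cQ|_{\cX_{\bar t}})_{0})$, the stalk at the origin (up to an appropriate shift), and the passage from $\Hom$ in $D_c^b$ to $\Hom$ in $P(\cX_{\bar t})$ — i.e.\ modding out the contributions of negligible subobjects — is itself controlled by Euler–Poincaré characteristics of perverse cohomology sheaves, which are locally constant on a suitable open by the ULA hypothesis. Concretely I would argue: (i) $\cQ$ being $f$-ULA, the complex $Rf_{*}$ of various twists/convolutions of $\cQ$ lies in $D_{\liss}^{b}(S)$ (cf.\ the discussion after \cite[Lem.]{Barrett} in Section \ref{Sec:KWRel}), so the relevant $\overline{\Q}_\ell$-vector spaces are fibers of local systems on $S$ after shrinking; (ii) the ``negligible part'' that one subtracts when passing from $\Perv(\cX_{\bar t})$ to $P(\cX_{\bar t})$ — namely the multiplicity of the unit among Jordan–Hölder factors of $T^{m,n}(\cP^{*}|_{\cX_{\bar t}})$, where $\cP^{*}$ is as in Section \ref{Sec:LiftingSS}, or more simply the contribution measured by $\chi(\cX_{\bar t},{}^{p}H^{n}(-))$ — is, after shrinking $S$ using Theorem \ref{MT} (applied to the finitely many $T^{m,n}(\cP)$ relevant here, or rather to $\cP$ itself so that all tensor powers behave uniformly), of constant length; hence $t\mapsto \Sdim_{\overline{\Q}_\ell}(I^{m,n}(\cP|_{\cX_{\bar t}}))$ is the difference of two functions that are constructible (lower semicontinuous for the first, locally constant for the second), and in particular upper semicontinuous on $S$.

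Once upper semicontinuity of $t\mapsto \Sdim_{\overline{\Q}_\ell}(I^{m,n}(\cP|_{\cX_{\bar t}}))$ is established, together with the fact (from Section \ref{Sec:KWRel}, the equivalence $\langle\cP\rangle\stackrel{\simeq}{\to}\langle\cP|_{\cX_{\bar\eta}}\rangle$) that the generic value at $\eta$ equals the minimum, the set $S_{m,n}:=\{s\in S\mid \Sdim_{\overline{\Q}_\ell}(I^{m,n}(\cP|_{\cX_{\bar\eta}}))<\Sdim_{\overline{\Q}_\ell}(I^{m,n}(\cP|_{\cX_{\bar s}}))\}$ is exactly the locus where a constructible function exceeds its generic value, hence a constructible subset that is a proper subset of $S$ and is moreover closed (the stratum where the fiber dimension jumps is closed by semicontinuity). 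To upgrade ``constructible proper subset'' to ``strict closed subscheme'' I would simply take $S_{m,n}$ with its reduced induced closed subscheme structure, using that semicontinuity of stalk dimensions of a constructible complex gives a genuine closed jumping locus; properness is immediate since $\eta\notin S_{m,n}$.

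The main obstacle I expect is step (ii): cleanly identifying the ``negligible correction term'' and proving it is locally constant in the family. The subtlety is that $I^{m,n}$ is defined in the \emph{Tannakian quotient} $P(\cX_{\bar t})$, not in $\Perv(\cX_{\bar t})$, so one must control not just $\dim\SHom_{D_c^b}(\delta_0,T^{m,n}(\cP)|_{\cX_{\bar t}})$ — which is visibly the stalk of an lcc sheaf on a suitable open by the ULA property and proper base change — but also how many copies of the unit get killed in the quotient, equivalently the multiplicity of $\delta_0$ among the \emph{negligible} Jordan–Hölder constituents of $T^{m,n}(\cP|_{\cX_{\bar t}})$. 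Here I would lean on Theorem \ref{MT} applied to $\cP$ (so that the Jordan–Hölder pattern of every $T^{m,n}(\cP|_{\cX_{\bar t}})$ stabilizes on a common open, by Noetherian induction over the finitely many $(m,n)$ actually needed, or by a uniformity argument), combined with the semisimplicity hypothesis which forces $\cP|_{\cX_{\bar s}}$ semisimple for all $s$ after shrinking, so that $T^{m,n}(\cP|_{\cX_{\bar s}})$ is semisimple and its unit-multiplicity is literally $\Sdim_{\overline{\Q}_\ell}\SHom_{\Perv(\cX_{\bar s})}(\delta_0,T^{m,n}(\cP|_{\cX_{\bar s}}))$ minus the negligible non-unit part — and then an explicit count via $\chi$'s of perverse cohomology, all of which are constant on a suitable open by the ULA property and Deligne's generic base change.
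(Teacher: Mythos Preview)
Your overall strategy---reduce $s\mapsto\Sdim_{\overline{\Q}_\ell}(I^{m,n}(\cP|_{\cX_{\bar s}}))$ to a constructible, upper-semicontinuous function on $S$---is the right one and matches the paper's. But the execution has genuine gaps, and the place where you expect the main difficulty is precisely where the paper shows there is none.

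First, there is \emph{no} ``negligible correction term'' to subtract. The paper's key observation is that $\delta_0$ is simple and non-negligible, so $\SHom_{\Perv(\cX_{\bar s})}(\cN,\delta_0)=0$ for every $\cN\in N(\cX_{\bar s})\cap\Perv(\cX_{\bar s})$, and for non-negligible simples $\cS_1,\cS_2$ the map $\SHom_{\Perv}(\cS_1,\cS_2)\to\SHom_P(\cS_1,\cS_2)$ is an isomorphism. Hence, once $\cP_{m,n}|_{\cX_{\bar s}}:={}^p\SH^0(T^{m,n}(\cP|_{\cX_{\bar s}}))$ is known to be semisimple in $\Perv(\cX_{\bar s})$, one gets directly
\[
\Sdim_{\overline{\Q}_\ell}(I^{m,n}(\cP|_{\cX_{\bar s}}))=\Sdim_{\overline{\Q}_\ell}\SHom_{\Perv(\cX_{\bar s})}(\cP_{m,n}|_{\cX_{\bar s}},\delta_0),
\]
a quantity living entirely in $\Perv$, with nothing to subtract. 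Your step (ii) is therefore a non-issue---but only \emph{after} one knows $\cP_{m,n}|_{\cX_{\bar s}}$ is semisimple. That semisimplicity is not automatic from semisimplicity of $\cP|_{\cX_{\bar s}}$: it is Lemma~\ref{Lem:TensorSS}, which relies on Kashiwara's conjecture (equivalently de Jong's conjecture). You assert it without justification.

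Second, your route to semicontinuity via ``stalks of a constructible sheaf'' and a ``difference of two functions'' is too vague to go through as written, and your repeated appeals to Theorem~\ref{MT} are misplaced (the hypothesis of Corollary~\ref{Cor:MC2Bis} already assumes semisimplicity at every $s$; the paper does not invoke Theorem~\ref{MT} here at all). The paper instead constructs, via Lemma~\ref{MaximalSupport}, a relative perverse sheaf $\cP_{m,n,\{0\}}\in\Perv(\cX/S)$---the maximal quotient of $\cP_{m,n}$ supported on the zero section, whose formation commutes with base change---and identifies the invariant with $\mu_s:=\chi(\cX_{\bar s},\cP_{m,n,\{0\}}|_{\cX_{\bar s}})$. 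Constructibility of $s\mapsto\mu_s$ is then immediate; stability under generization (hence closedness of the jump locus) is obtained not from general sheaf semicontinuity but from the cospecialization inclusion $G(\cP_{m,n}|_{\cX_{\bar t_0}})\hookrightarrow G(\cP_{m,n}|_{\cX_{\bar t_1}})$ of Section~\ref{Sec:Specialization}, which forces $\mu_{t_0}\geq\mu_{t_1}$ for any specialization $t_1\rightsquigarrow t_0$.
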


 \begin{proof}  Write $$\cP_{m,n}:={}^{p/S}\SH^0(T^{m,n}(\cP)),$$
  which is again in $\Perv^{\ULA}(\cX/S)$ with the properties that,
for  every $s\in S$,  $$ {}^p\SH^0(T^{m,n}(\cP|_{\cX_{\bar s}}))\simeq \cP_{m,n}|_{\cX_{\bar s}}$$ and, by Lemma \ref{Lem:TensorSS}, 
 $ \cP_{m,n}|_{\cX_{\bar s}}$  is semisimple in $\Perv(\cX_{\bar s})$.  For $s\in S$, decompose $ \cP_{m,n}|_{\cX_{\bar s}}$ as
 $$ \cP_{m,n}|_{\cX_{\bar s}}\simeq  (\cP_{m,n}|_{\cX_{\bar s}})_{\neg}\oplus \cS,$$
 where  $\cS$  is the sum of all  simple non-negligible subobjects of $\cP_{m,n}|_{\cX_{\bar s}}$ in $\Perv(\cX_{\bar s})$. 
    As for every $ \cN \in N(\cX_{\bar s})\cap \Perv(\cX_{\bar s})$ one has 
    $$\SHom_{\hbox{\rm \tiny Perv}(\cX_{\bar s})}( \cN ,\delta_0)=0,$$
  the canonical morphism
 $$\SHom_{\hbox{\rm \tiny Perv}(\cX_{\bar s})}(\cS,\delta_0)\rightarrow \SHom_{\hbox{\rm \tiny Perv}(\cX_{\bar s})}(\cP_{m,n}|_{\cX_{\bar s}},\delta_0)$$
is an isomorphism. On the other hand, as for every non-negligible simple objects $\cS_1,\cS_2$ in $\Perv(\cX_{\bar s})$ the canonical morphism   
$$\SHom_{\hbox{\rm \tiny Perv}(\cX_{\bar s})}(\cS_1,\cS_2)\rightarrow \SHom_{P(\cX_{\bar s})}(\cS_1,\cS_2)$$
is   an isomorphism,
 the canonical morphism
$$\SHom_{\hbox{\rm \tiny Perv}(\cX_{\bar s})}(\cS,\delta_0)\rightarrow \SHom_{P(\cX_{\bar s})}(\cS,\delta_0)$$
is also an isomorphism. This proves that 
$$\Sdim_{\overline{\Q}_\ell}( I^{m,n}( \cP|_{\cX_{\bar s}}))=\Sdim_{\overline{\Q}_\ell}( I^{1,0}( \cP_{m,n}|_{\cX_{\bar s}}))=\Sdim_{\overline{\Q}_\ell}(\SHom_{\hbox{\rm \tiny Perv}(\cX_{\bar s})}(\cP_{m,n}|_{\cX_{\bar s}},\delta_0)).$$
By Lemma \ref{MaximalSupport}, there  is a  quotient $\cP_{m,n}\twoheadrightarrow\cP_{m,n,\lbrace 0\rbrace} $
 in $\Perv(\cX/S)$ such that   for every geometric point $\bar s$ on $ S$, $ \cP_{m,n}|_{\cX_{\bar s}}\twoheadrightarrow  \cP_{m,n,\lbrace 0\rbrace}|_{\cX_{\bar s}}$ is   the maximal   quotient of $ \cP_{m,n}|_{\cX_{\bar s}}$ in $\Perv(\cX_{\bar s})$ with support in $ \lbrace 0\rbrace$.  In particular, the canonical injective  morphism
 $$\SHom_{\hbox{\rm \tiny Perv}(\cX_{\bar s})}( \cP_{m,n,\lbrace 0\rbrace}|_{\cX_{\bar s}},\delta_0)\rightarrow \SHom_{\hbox{\rm \tiny Perv}(\cX_{\bar s})}(\cP_{m,n}|_{\cX_{\bar s}},\delta_0)$$
 is an isomorphism. But as the full subcategory $\Perv_0(\cX_{\bar s})\subset \Perv(\cX_{\bar s})$ of all objects  with support in $\lbrace 0\rbrace$ identifies with $\Perv(\bar 0)\simeq \Vect_{\overline{\Q}_\ell}$ \textit{via}
 $$0_*:\Perv(\bar 0)\tilde{\rightarrow} \Perv_0(\cX_{\bar s}),$$
one has $\cP_{m,n,\lbrace 0\rbrace}|_{\cX_{\bar s}}\simeq \delta_0^{\oplus \mu_s}$ with
  $$\mu_s:=\Sdim_{\overline{\Q_\ell}}(\SHom_{\hbox{\rm \tiny Perv}(\cX_{\bar s})}(\cP_{m,n,\lbrace 0\rbrace}|_{\cX_{\bar s}},\delta_0)=\chi(\cX_{\bar s},  \cP_{m,n,\lbrace 0\rbrace}|_{\cX_{\bar s}}).$$ 
 This eventually reduces Corollary \ref{Cor:MC2Bis}   to proving that for every $b\geq 0$ the subset $$U_{\leq b} :=\lbrace s\in S\;|\; \mu_s\leq  b\rbrace\subset S$$
is open. As the  map $  \mu:S\rightarrow \Z_{\geq 0} $
  is constructible, it is enough to prove that $U_{\leq b} $ is stable under generization. This essentially follows from the existence of the cospecialization morphism since,   for every specialization $t_1\rightsquigarrow t_0$ of points in $S$,   $csp_{\bar t_1,\bar t_0} $ identifies $G(\cP_{m,n}|_{\cX_{\bar t_0}})$ with a subgroup $$G(\cP_{m,n}|_{\cX_{\bar t_0}})\subset G(\cP_{m,n}|_{\cX_{\bar t_1}})\subset \SGL(\omega_{\bar t_1}(\cP_{m,n}|_{\cX_{\bar t_1}})),$$ so that $\mu_{t_0}\geq \mu_{t_1}$. \end{proof}

  \begin{lemme}\label{Lem:TensorSS} Let $k$ be an algebraically closed field of characteristic $0$, let $X$ be an abelian variety over $k$ and let $\cP\in \Perv(X)$. Assume $\cP$ is semisimple in $ \Perv(X)$. Then for every integers $m,n\geq 0$, ${}^p\SH^0(T^{m,n}(\cP))$ is again semisimple in $\Perv(X)$.
 \end{lemme}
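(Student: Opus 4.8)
The plan is to reduce Lemma~\ref{Lem:TensorSS} to the fact that an iterated convolution of semisimple perverse sheaves on an abelian variety is a \emph{semisimple complex}, i.e.\ a finite direct sum of shifts of simple perverse sheaves; granting this, ${}^p\SH^0(T^{m,n}(\cP))$ is one of the direct summands of such a complex and is therefore semisimple in $\Perv(X)$. To set this up I would first unwind the notation: in the rigid symmetric monoidal category $(D_c^b(X),*)$ one has $T^{m,n}(\cP)=\cP^{*m}*(\cP^\vee)^{*n}$ with $\cP^\vee=[-1]^*D_X(\cP)$. By \cite[Thm.~4.3.1~(ii)]{BBD}, $D_X$ is an anti-autoequivalence of $\Perv(X)$ carrying simple objects to simple objects, and $[-1]^*$ is an autoequivalence of $\Perv(X)$ with the same property, so the dual $\cP^\vee$ is again semisimple in $\Perv(X)$. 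Hence $T^{m,n}(\cP)$ is an $(m+n)$-fold convolution of the semisimple perverse sheaves $\cP$ and $\cP^\vee$, the cases $m+n\le 1$ being immediate since $T^{0,0}(\cP)=\delta_0$, $T^{1,0}(\cP)=\cP$ and $T^{0,1}(\cP)=\cP^\vee$ are already semisimple perverse.

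For the remaining claim I would induct on the number of convolution factors. For semisimple $\cP_1,\cP_2\in\Perv(X)$, the exterior product $\cP_1\boxtimes^L\cP_2$ is a semisimple perverse sheaf on $X\times_kX$: the exterior product of perverse sheaves is perverse for the middle perversity, and it sends a pair of simple objects $(\IC_{Z_1}(\cL_1),\IC_{Z_2}(\cL_2))$ to $\IC_{Z_1\times Z_2}(\cL_1\boxtimes^L\cL_2)$, which is again simple because an exterior product of irreducible local systems is irreducible (an irreducible representation of $\pi_1(Z_1)\times\pi_1(Z_2)$ over $\overline{\Q}_\ell$ is an exterior product of irreducibles). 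Since $m\colon X\times_kX\to X$ is proper, the decomposition theorem for $Rm_*$ — which applies here, $k$ being of characteristic $0$, and is precisely what makes convolution of semisimple perverse sheaves on an abelian variety semisimple (see \cite{KW}) — shows that $\cP_1*\cP_2=Rm_*(\cP_1\boxtimes^L\cP_2)$ is a semisimple complex. For the inductive step, if $\cK\in D_c^b(X)$ is a semisimple complex then $\cK\boxtimes^L\cP$ is a finite direct sum of shifts of semisimple perverse sheaves on $X\times_kX$, and applying $Rm_*$ summand by summand, again via the decomposition theorem, shows $\cK*\cP=Rm_*(\cK\boxtimes^L\cP)$ is a semisimple complex. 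Thus $T^{m,n}(\cP)$ is a semisimple complex and the lemma follows.

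The main obstacle is exactly this input: that convolution preserves semisimplicity, equivalently that the decomposition theorem is available for $Rm_*$ of the semisimple perverse sheaves $\cP_1\boxtimes^L\cP_2$. This is the mechanism underlying the construction of the Tannakian category $P(X)$ recalled in \S\ref{Sec:KWAbsolute}, and it is the only non-formal ingredient; the behaviour of $D_X$ and $[-1]^*$ on simple perverse sheaves, the perversity and semisimplicity of exterior products, and the extraction of ${}^p\SH^0$ from a semisimple complex are all routine.
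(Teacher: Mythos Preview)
Your argument follows the same route as the paper's: reduce to showing that a convolution of simple perverse sheaves is a semisimple complex, by first observing that the exterior product of simple perverse sheaves is simple (via $(\iota_1\times\iota_2)_{!*}(\cL_1\boxtimes^L\cL_2)\simeq\iota_{1,!*}\cL_1\boxtimes^L\iota_{2,!*}\cL_2$ and irreducibility of $\cL_1\boxtimes^L\cL_2$), and then applying a decomposition theorem to $Rm_*$. The one point where you should be more careful is the justification of that last step: the phrase ``the decomposition theorem for $Rm_*$ \dots\ applies here, $k$ being of characteristic $0$'' is too casual. The BBD decomposition theorem applies to perverse sheaves of geometric origin, and Saito's version via mixed Hodge modules to $\C$-varieties with $\Q$-coefficients; neither covers an \emph{arbitrary} simple $\overline{\Q}_\ell$-perverse sheaf on a variety over an algebraically closed field of characteristic $0$. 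What is needed is precisely Kashiwara's conjecture, reduced by Drinfeld to de Jong's conjecture and proved by B\"ockle--Khare and Gaitsgory; this is what the paper invokes. Your closing paragraph correctly flags this as the non-formal input, but you should name it rather than gesture at \cite{KW}. (Also, the construction of $P(X)$ in \S\ref{Sec:KWAbsolute} does \emph{not} rely on the decomposition theorem --- it proceeds by quotienting out negligibles and works for all perverse sheaves --- so that sentence should be dropped.)
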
 
 \begin{proof}  \textit{(Sketch of)} This is mentioned as \cite[Ex.~5.1]{KW}. The fact that $Rm_*:D_c^b(X\times X)\rightarrow D_c^b(X)$ preserves direct sums of shifts of simple perverse sheaves follows from Kashiwara's conjecture (Kashiwara's conjecture is reduced to a conjecture of de Jong in \cite{KashiDrinfeld}, and de Jong's conjecture is proved in \cite{deJongConj1}, \cite{deJongConj2}) while the fact that the exterior tensor product $\cP_1\boxtimes^L \cP_2$ of two  simple objects  $\cP_1,\cP_2\in \Perv(X)$ is  a  direct sums of shifts of simple perverse sheaves follows from the structure of simple perverse sheaves and  the fact that for every   immersion $\iota_i:U_i\hookrightarrow X$
 and $\cL_i\in \Loc(U_i)$, 
 $i=1,2$ one has $$(\iota_1\times \iota_2)_{!*}( \cL_1\boxtimes^L    \cL_2) \simeq  \iota_{1,!*}(\cL_1)\boxtimes^L  \iota_{2,!*}(\cL_2).$$
 See \textit{e.g.} \cite[Ex. 10.2.31]{MS}.
 \end{proof}

 \begin{lemme}\label{MaximalSupport} Let     $f:\cX\rightarrow S$ be a separated morphism of finite type and let  $\iota:\cZ\hookrightarrow \cX$ be a closed immersion.  For every $\cP\in \Perv(\cX/S)$, there is a  quotient $\cP\twoheadrightarrow\cP_\cZ $
 in $\Perv(\cX/S)$ such that   for every geometric point $\bar s$ on $ S$, $ \cP|_{\cX_{\bar s}}\twoheadrightarrow  \cP_\cZ|_{\cX_{\bar s}}$ is   the maximal   quotient of $ \cP|_{\cX_{\bar s}}$ in $\Perv(\cX_{\bar s})$ with support in $\cZ_{\bar s}$.  
 \end{lemme}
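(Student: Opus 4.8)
The plan is to take for $\cP_\cZ$ the zeroth relative perverse cohomology sheaf of $\iota_*\iota^*\cP$, and then to read off the fibrewise statement from the compatibility of the relative perverse $t$-structure with base change.

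First I would set up the relative recollement. Let $j:\cU:=\cX\setminus\iota(\cZ)\hookrightarrow\cX$ be the complementary open immersion and consider the adjunction triangle
$$j_!j^*\cP\longrightarrow\cP\longrightarrow\iota_*\iota^*\cP\stackrel{+1}{\longrightarrow}$$
in $D_c^b(\cX)$. Because the relative perverse $t$-structure and its truncation functors are detected fibrewise, and compatibly with $-|_{\cX_{\bar s}}$, by \cite{HansenScholze}, the familiar exactness properties of the absolute perverse $t$-structure ($j^*$ is $t$-exact, $j_!$ is right $t$-exact, $\iota_*$ is $t$-exact, $\iota^*$ is right $t$-exact) show that $j_!j^*\cP$ and $\iota_*\iota^*\cP$ are concentrated in non-positive relative perverse degrees. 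Taking relative perverse cohomology of the triangle and using that $\cP$ is relatively perverse then yields an exact sequence
$${}^{p/S}\SH^0(j_!j^*\cP)\longrightarrow\cP\longrightarrow{}^{p/S}\SH^0(\iota_*\iota^*\cP)\longrightarrow 0$$
in $\Perv(\cX/S)$, so I would put
$$\cP_\cZ:={}^{p/S}\SH^0(\iota_*\iota^*\cP)=\iota_*\,{}^{p/S}\SH^0(\iota^*\cP),$$
a relatively perverse quotient of $\cP$ with support in $\cZ$.

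Next I would identify the fibres. Fix a geometric point $\bar s$ on $S$. Proper base change along the closed (hence proper) immersion $\iota$, together with the compatibility of ${}^{p/S}\SH^0$ with $-|_{\cX_{\bar s}}$, gives
$$\cP_\cZ|_{\cX_{\bar s}}\simeq\iota_{\bar s *}\,{}^p\SH^0\!\left(\iota_{\bar s}^*(\cP|_{\cX_{\bar s}})\right),$$
and the map $\cP|_{\cX_{\bar s}}\twoheadrightarrow\cP_\cZ|_{\cX_{\bar s}}$ stays an epimorphism because $-|_{\cX_{\bar s}}\colon\Perv(\cX/S)\to\Perv(\cX_{\bar s})$ is exact. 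It then remains to check that the right-hand side is the maximal quotient of $\cP|_{\cX_{\bar s}}$ in $\Perv(\cX_{\bar s})$ with support in $\cZ_{\bar s}$, which is the classical absolute statement. Indeed, by Kashiwara's equivalence any perverse quotient $q\colon\cP|_{\cX_{\bar s}}\twoheadrightarrow\cQ$ supported on $\cZ_{\bar s}$ has the form $\cQ=\iota_{\bar s *}\cQ'$ with $\cQ'\in\Perv(\cZ_{\bar s})$, and by $(\iota_{\bar s}^*,\iota_{\bar s *})$-adjunction together with right $t$-exactness of $\iota_{\bar s}^*$ one has
$$\SHom(\cP|_{\cX_{\bar s}},\iota_{\bar s *}\cQ')\simeq\SHom(\iota_{\bar s}^*(\cP|_{\cX_{\bar s}}),\cQ')\simeq\SHom\!\left({}^p\SH^0(\iota_{\bar s}^*(\cP|_{\cX_{\bar s}})),\cQ'\right);$$
tracing the adjunction units back one sees that $q$ factors through $\cP|_{\cX_{\bar s}}\to\cP_\cZ|_{\cX_{\bar s}}$, and, $q$ being an epimorphism, so is the induced map $\cP_\cZ|_{\cX_{\bar s}}\to\cQ$.

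Thus the argument is a routine transcription of the absolute recollement picture to the relative setting, and its only genuine ingredients are the existence of the relative perverse $t$-structure (legitimate here, $S$ being smooth and geometrically connected, hence irreducible) and the compatibility of ${}^{p/S}\SH^0$ with base change along $\bar s\to S$, both due to Hansen--Scholze. The point I would take care with is precisely that one must work with the relative perverse $t$-structure on a subcategory of $D_c^b(\cX)$ large enough to contain $\iota_*\iota^*\cP$ — note that $\iota_*\iota^*\cP$ need not be $f$-ULA even when $\cP$ is — rather than only on $D^{\ULA}(\cX/S)$; apart from this bookkeeping I do not expect a serious obstacle.
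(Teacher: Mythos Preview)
Your argument is correct and coincides with the paper's: both define $\cP_\cZ$ via $\cP\to{}^{p/S}\tau^{\ge 0}(\iota_*\iota^*\cP)=\iota_*\,{}^{p/S}\SH^0(\iota^*\cP)$, check base-change compatibility using $t$-exactness of $\iota_*$, right $t$-exactness of $\iota^*$, and proper base change, and then verify maximality fibrewise by adjunction. The only cosmetic differences are that the paper defines $\cP_\cZ$ as the \emph{image} of $\cP\to{}^{p/S}\tau^{\ge 0}(\iota_*\iota^*\cP)$ (your long exact sequence argument shows this map is already surjective, so the two definitions agree), and that in the absolute step you invoke Kashiwara's equivalence whereas the paper uses the triangle $j_!j^*\cQ\to\cQ\to\iota_*\iota^*\cQ$ to the same effect; your remark about needing the relative perverse $t$-structure on more than $D^{\ULA}(\cX/S)$ is well taken and matches the paper's use of the unadorned category $\Perv(\cX/S)$.
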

 \begin{proof}  Define   $\cP\twoheadrightarrow\cP_\cZ$ as the image of the composite
 $$\cP\stackrel{\hbox{\rm\tiny ad}_\iota}{\rightarrow} \iota_*\iota^*\cP \rightarrow  {}^{p/S}\tau^{\geq 0}(\iota_*\iota^* \cP)$$
 of the adjunction morphism for $\iota:\cZ\hookrightarrow \cX$ and the relative perverse truncation with respect to $f:\cX\rightarrow S$. 
  As $\iota_*:D_c^b(\cZ)\rightarrow D_c^b(\cX)$ is $t$-exact and  $\iota^*:D_c^b(\cX)\rightarrow D_c^b(\cZ)$ is right $t$-exact with respect to the relative perverse $t$-structure on $f:\cX\rightarrow S$, one has 
 $${}^{p/S}\tau^{\geq 0}(\iota_*\iota^* -)\simeq \iota_*{}^{p/S}\tau^{\geq 0}(\iota^*-)\simeq \iota_*{}^{p/S}\SH^0(\iota^*-):\Perv(\cX/S)\rightarrow \Perv(\cX/S).$$
 Furthermore,  for every geometric point $\bar s$ on $ S$, by proper base-change,
  $$ -|_{\cX_{\bar s}}\circ \iota_* \simeq \iota_{\bar s *} \circ -|_{\cZ_{\bar s}} :\Perv(\cZ/S)\rightarrow \Perv(\cX_{\bar s}),$$
  while, by definition of the relative perverse $t$-structure,
  $$-|_{\cZ_{\bar s}} \circ {}^{p/S}\SH^0(-) \simeq  {}^p\SH^0(-|_{\cZ_{\bar s}} )  :\Perv(\cZ/S)\rightarrow \Perv(\cZ_{\bar s}).$$
  This proves that for every geometric point $\bar s$, the formation of $\cP\twoheadrightarrow\cP_\cZ$ commutes with $-|_{\cX_{\bar s}}:\Perv(\cX/S)\rightarrow \Perv(\cX_{\bar s}) $ and reduces the proof of Lemma \ref{MaximalSupport} to the case where $S=\spec(\bar k)$ is the spectrum of an algebraically closed field. By construction $\cP_\cZ$ has support in $\cZ$. Let $j:\cX\setminus \cZ\hookrightarrow \cX$ denote the complementary open immersion. Conversely, for every quotient $\cP\twoheadrightarrow \cQ$ in $\Perv(\cX)$ with support in $\cZ$, the distinguished triangle 
  $$j_!j^*\cQ\rightarrow \cQ\rightarrow \iota_*\iota^*\cQ\stackrel{+1}{\rightarrow}$$
  ensures that $\cQ\tilde{\rightarrow} \iota_*\iota^*\cQ$ so that, by adjunction, $\cP\twoheadrightarrow \cQ\tilde{\rightarrow} \iota_*\iota^*\cQ$ factors as 
\begin{equation}\label{Diag:MaximalSupport}\xymatrix{\cP\ar@{->>}[rr]\ar[dr]_{\hbox{\rm\tiny ad}_\iota} && \iota_*\iota^*\cQ\\
  & \iota_*\iota^*\cP\ar@{.>}[ur] &},\end{equation}
  and, as $\cQ\in \Perv(\cX)$, (\ref{Diag:MaximalSupport}) factors further as 
  $$\xymatrix{\cP\ar@{->>}[rr]\ar[dr]_{\hbox{\rm\tiny ad}_\iota} && \iota_*\iota^*\cQ\\
  & \iota_*\iota^*\cP\ar[ur] \ar[r]& {}^{p/S}\tau^{\geq 0}(\iota_*\iota^* \cP),\ar@{.>}[u]}$$
  which concludes the proof of Lemma \ref{MaximalSupport}. \end{proof}

   \subsection{Proof of Corollary \ref{Cor:MTBis}}  Let   $f:\cX\rightarrow S$ be an abelian scheme. We begin with the following observation.

 \begin{lemme}\label{Lem:Det} Let  $\cP\in \Perv^{\ULA}(\cX/S)$. Assume $\cP|_{\cX_{\bar \eta}}$ has torsion determinant of order $N$. Then  for every $s\in S$, $\cP|_{\cX_{\bar s}}$ also has torsion determinant of order dividing $N$. 
 \end{lemme}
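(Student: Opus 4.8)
The plan is to reduce, via the Tannakian formalism, to a statement about a single rank-one object and then to spread that object out over $S$ using Lemma~\ref{Lem:SpecAr}. Throughout I use that an exact tensor functor of Tannakian categories preserves categorical dimensions and commutes with the formation of top exterior powers (hence of determinants) and of tensor/convolution powers.

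First I would work inside the Tannakian category $P^\ULA(\cX/S)$ and set $\cD:=\det(\cP)^{*N}\in P^\ULA(\cX/S)$, the $N$-th convolution power of the Tannakian determinant of $\cP$. For every $s\in S$ and geometric point $\bar s$ over $s$, the restriction functor $-|_{\cX_{\bar s}}:P^\ULA(\cX/S)\to P(\cX_{\bar s})$ is an exact tensor functor (it is the composite of the exact tensor functor $-|_{\cX_s}$ of \eqref{Eq:1} with $-|_{\cX_{\bar s}}:P(\cX_s)\to P(\cX_{\bar s})$), so $\cD|_{\cX_{\bar s}}\simeq\det(\cP|_{\cX_{\bar s}})^{*N}$, and likewise $\cD|_{\cX_{\bar\eta}}\simeq\det(\cP|_{\cX_{\bar\eta}})^{*N}\simeq\delta_{\bar 0}$ by hypothesis (recall that "$\det(\cP|_{\cX_{\bar\eta}})$ torsion of order $N$'' means precisely that its $N$-th convolution power is the unit $\delta_{\bar 0}$, invertible objects forming a group under $*$). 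Thus it suffices to prove $\cD|_{\cX_{\bar s}}\simeq\delta_{\bar 0}$ in $P(\cX_{\bar s})$ for every $s\in S$.

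Next I would view $\cD$ as an object of $\Perv^\ULA(\cX/S)$ and replace it by its negligible-essential part $\cD_1:=\cD^{*}\in\Perv^\ULA(\cX/S)$ in the sense of Section~\ref{Sec:LiftingSS}; then $\cD_1$ again represents $\cD$ in $P^\ULA(\cX/S)$, and, by the compatibility $(-)^{*}\circ -|_{\cX_{\bar\eta}}\simeq -|_{\cX_{\bar\eta}}\circ(-)^{*}$ obtained from \eqref{Diag:Commute1} and \eqref{Diag:Commute2}, one gets $\cD_1|_{\cX_{\bar\eta}}\simeq(\cD|_{\cX_{\bar\eta}})^{*}\simeq(\delta_{\bar 0})^{*}=\delta_{\bar 0}$, since the simple non-negligible object $\delta_{\bar 0}$ has no non-zero negligible sub- or quotient object. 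In particular $\bigl(\cD_1|_{\cX_\eta}\bigr)|_{\cX_{\bar\eta}}\simeq\delta_{\bar 0}$ is supported at $\{\bar 0\}$, and as the support of a constructible complex is insensitive to the extension $\overline{k(\eta)}/k(\eta)$, the perverse sheaf $\cD_1|_{\cX_\eta}$ is supported on the image of the zero section $0\colon\Spec k(\eta)\hookrightarrow\cX_\eta$. Hence $\cD_1|_{\cX_\eta}\simeq 0_{\eta*}\cL_{[\eta]}$ with $\cL_{[\eta]}:=0_\eta^{*}(\cD_1|_{\cX_\eta})$; since $0_{\eta*}$ is $t$-exact, $\cL_{[\eta]}$ is concentrated in degree $0$, i.e.\ a finite-dimensional continuous $\overline{\Q}_\ell$-representation of $\Gal(\overline{k(\eta)}/k(\eta))$, of rank one because its pullback to $\bar\eta$ is the stalk of $\delta_{\bar 0}$.

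Finally I would apply Lemma~\ref{Lem:SpecAr} to the zero section $0\colon S\hookrightarrow\cX$, with $\cP:=\cD_1$ and $\cL_{[\eta]}$ as above (we have $0_{\eta*}\cL_{[\eta]}\simeq\cD_1|_{\cX_\eta}$): it yields $\cL\in\Loc(S)$ with $0_*\cL\simeq\cD_1$ and $\eta^{*}\cL\simeq\cL_{[\eta]}$, so $\cL$ has rank one. Then for every $s\in S$ and geometric point $\bar s$, base change along $\cX_{\bar s}\to\cX$ gives $\cD_1|_{\cX_{\bar s}}\simeq(0_*\cL)|_{\cX_{\bar s}}\simeq 0_{\bar s*}(\cL|_{\bar s})\simeq\delta_{\bar 0}$ in $\Perv(\cX_{\bar s})$, and passing to $P(\cX_{\bar s})$ gives $\cD|_{\cX_{\bar s}}\simeq\delta_{\bar 0}$, hence $\det(\cP|_{\cX_{\bar s}})^{*N}\simeq\delta_{\bar 0}$, which is the claim. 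The one genuinely delicate point is the passage from the Tannakian object $\cD$ (well defined only up to negligibles) to an honest relative perverse sheaf whose geometric generic fibre is \emph{literally} the skyscraper $\delta_{\bar 0}$, and therefore supported on the zero section; this is exactly what the $(-)^{*}$-construction of Section~\ref{Sec:LiftingSS} achieves, after which Lemma~\ref{Lem:SpecAr} does the spreading-out. Everything else is routine tensor-categorical bookkeeping.
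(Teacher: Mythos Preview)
Your proof is correct but takes a more hands-on route than the paper's. The paper's argument is a two-line application of the cospecialization map constructed in Section~\ref{Sec:Specialization}: since exact tensor functors commute with $\wedge^n(-)$, one has $\det(\cP|_{\cX_{\bar s}})\simeq(\det\cP)|_{\cX_{\bar s}}$, and the inclusion $G(\det(\cP|_{\cX_{\bar s}}))\subset G(\det(\cP|_{\cX_{\bar\eta}}))$ coming from $csp_{\bar\eta,\bar s}$ forces the order to divide $N$. You instead avoid invoking $csp_{\bar\eta,\bar s}$ as a black box and argue directly: you lift $\cD=\det(\cP)^{*N}$ to an honest relative perverse sheaf, strip negligibles via $(-)^*$ so that its geometric generic fibre is literally $\delta_{\bar 0}$, and then spread out along the zero section using Lemma~\ref{Lem:SpecAr}. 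This is essentially re-running the first construction of $(csp_{\bar\eta,\bar s})_0$ from Section~\ref{Sec:Construction1} in this special rank-one case. What your approach buys is a slightly sharper statement---you exhibit $\cD$ as $0_*\cL$ for a rank-one $\cL\in\Loc(S)$, not merely that its fibres are trivial---at the cost of being longer and invoking the $(-)^*$ machinery of Section~\ref{Sec:LiftingSS}. The paper's approach is cleaner once the cospecialization diagram~(\ref{Diag:Rep}) is in hand.
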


\begin{proof}  Let $n$ denote the dimension of $\cP$ in $P^{\ULA}(\cX/S)$ and $\det(\cP):=\wedge^n\cP$ its determinant. It follows from the general formalism of Tannakian categories that for every $t\in S$, $\cP|_{\cX_t}$, $\cP|_{\cX_{\bar t}}$ again have dimension $n$ and that $(\wedge^n\cP)|_{\cX_t}\simeq \wedge^n(\cP|_{\cX_t})$, $(\wedge^n\cP)|_{\cX_{\bar t}}\simeq \wedge^n(\cP|_{\cX_{\bar t}})$. In particular, as for every $s\in S$, one has 
 $$G( \wedge^n(\cP|_{\cX_{\bar s}}))\subset G( \wedge^n(\cP|_{\cX_{\bar \eta}})),$$ 
hence $\wedge^n(\cP|_{\cX_{\bar s}})$ is also torsion, with order    dividing $N$. \end{proof}

\noindent Note that if $G(\cP|_{\cX_{\bar \eta}})$ is semisimple then any object in $\langle \cP|_{\cX_{\bar \eta}}\rangle$ has torsion determinant of order dividing $|\pi_0(G(\cP|_{\cX_{\bar \eta}}))|$.\\

   \noindent  We now turn to the proof of Corollary \ref{Cor:MTBis} itself. Let $\cP\in \Perv^{\ULA}(\cX/S)$. After possibly replacing $\cP\in  \Perv^\ULA(\cX/S)$ with $\cP|_{\cX_{\bar k}}\in   \Perv^\ULA(\cX_{\bar k}/S_{\bar k})$, one may assume  $k=\bar k$ is algebraically closed.  \\

    \begin{itemize}[leftmargin=*, parsep=0cm,itemsep=0.2cm,topsep=0.2cm]
 \item  Proof of Corollary \ref{Cor:MTBis} (1).  Up to replacing $\cP$ with  $\cP^*$ - see Corollary \ref{Cor:LiftingSS}, one may assume $\cP|_{\cX_{\bar \eta}}$ is simple in $\Perv(\cX_{\bar \eta})$, and not only in $P(\cX_{\bar \eta})$. Then 
   Corollary \ref{Cor:MTBis} (1) immediately follows from  Fact \ref{Fact:KramerDiss},   Theorem \ref{MT} and  Lemma \ref{Lem:Det}.

 \item  Proof of Corollary \ref{Cor:MTBis} (2).  Let us first observe that for a connected reductive group $G$ and a closed subgroup $H\subset G$, the following are equivalent
    \begin{enumerate}[leftmargin=*, parsep=0cm,itemsep=0.2cm,topsep=0.2cm]
 \item $H\subset G\twoheadrightarrow G^{\semis}$ factors through an isogeny $H^{\semis}\twoheadrightarrow G^{\semis}$;
 \item dim$(R(G)\cap H)=$dim$(R(H))$ and dim$(H)-$dim$(R(H))=$dim$(G)-$dim$(R(G))$;
  \item dim$(R(G)\cap H^\circ)=$dim$(R(H^\circ))$ and dim$(H^\circ)-$dim$(R(H^\circ))=$dim$(G)-$dim$(R(G))$;
  \item  $H^\circ\subset G\twoheadrightarrow G^{\semis}$ factors through an isogeny $H^{\circ,\semis}\twoheadrightarrow G^{\semis}$.
 \end{enumerate}
 In particular, to prove the first part of  Corollary \ref{Cor:MTBis} (2), one can replace $G(\cP|_{\cX_{\bar s}})^\circ$ with $G(\cP|_{\cX_{\bar s}})\cap G(\cP|_{\cX_{\bar \eta}})^\circ$ (which will simplify a bit the notation).\\
 
 \noindent   Replacing $\cP\in \Perv^{\ULA}(\cX/S)$ with $[N]_*\cP\in \Perv^{\ULA}(\cX/S)$ for some integer $N\geq 1$, one may assume $G(\cP|_{\cX_{\bar \eta}})$ is connected (see \cite[\S 2]{W}). From the short exact sequence 
   $$1\rightarrow G(\cP|_{\cX_{\bar \eta}})\rightarrow G(\cP )\rightarrow G( \langle\cP \rangle_0 )\rightarrow 1$$
 and the description of $G( \langle\cP \rangle_0 )$ in terms of representation of $\pi_1(S)$ (see Subsection \ref{Sec:ProofHilbert} below), 
  replacing $\cP\in \Perv^{\ULA}(\cX/S)$ with $\cP|_{\cX_{S'}}\in \Perv^{\ULA}(\cX_{S'}/S')$ for some connected \'etale cover $S'\rightarrow S$, one may also assume $G(\cP)$ is connected. \\

 \noindent   Let $$G(\cP)\twoheadrightarrow G(\cP)^{\ad}$$
  denote the maximal \textit{adjoint} quotient\footnote{Namely, $G(\cP)^{\ad}=G(\cP)^{\red}/Z(G(\cP)^{\red})$, where $G(\cP)\twoheadrightarrow G(\cP)^{\red}:=G(\cP)/R_u(G(\cP))$ is the maximal reductive quotient of $G(\cP)$.} of $G(\cP)$. The morphism $G(\cP)\twoheadrightarrow G(\cP)^{\ad}$ factors as $G(\cP)\twoheadrightarrow G(\cP)^{\semis}\twoheadrightarrow G(\cP)^{\ad}$. On the other hand, as $G(\cP|_{\cX_{\bar \eta}})$ is normal in $ G(\cP)$, $R(G(\cP|_{\cX_{\bar \eta}}))=(R(G(\cP))\cap G(\cP|_{\cX_{\bar \eta}}))^\circ$ so that the  morphism $G(\cP|_{\cX_{\bar \eta}}) \hookrightarrow G(\cP)\twoheadrightarrow G(\cP)^{\semis}$ factors through a  morphism   $G(\cP|_{\cX_{\bar \eta}})^{\semis} \rightarrow  G(\cP)^{\semis}$ inducing an isogeny onto its image, 
 which is  a closed normal subgroup of  $G(\cP)^{\semis}$. By the structure theory of connected semisimple groups, there is a (unique) connected (automatically adjoint) quotient $G(\cP)^{\ad}\twoheadrightarrow \widetilde{G}$ such that the resulting canonical morphism
   $$G(\cP|_{\cX_{\bar \eta}})^{\semis}\rightarrow  G(\cP)^{\semis}\twoheadrightarrow G(\cP)^{\ad}\twoheadrightarrow \widetilde{G}$$
   is an isogeny. As $ \widetilde{G}$ is adjoint, it admits an irreducible faithful representation corresponding to a
simple object  $\cQ\in \langle \cP\rangle$; in particular,   $\widetilde{G}=G(\cQ)$. The commutative diagram of exact tensor functors
\begin{equation}\label{Diag:Cat}\xymatrix{\langle \cQ|_{\cX_{\bar \eta}}\rangle\ar@{_{(}->}[d]\ar@/^1.5pc/[rrr]^{  csp_{\bar \eta,\bar s}}&\langle \cQ\rangle\ar[l]_{|_{\cX_{\bar \eta}}}\ar[r]^{|_{\cX_s}}\ar@{_{(}->}[d]&\langle \cQ|_{\cX_s}\rangle\ar[r]^{|_{\cX_{\bar s}}}\ar@{_{(}->}[d]&\langle \cQ|_{\cX_{\bar s}}\rangle\ar@{_{(}->}[d]\\
\langle \cP|_{\cX_{\bar \eta}}\rangle\ar@/_1.5pc/[rrr]_{ csp_{\bar \eta,\bar s}} &\langle \cP\rangle\ar[l]_{|_{\cX_{\bar \eta}}}\ar[r]^{|_{\cX_s}} &\langle \cP|_{\cX_s}\rangle\ar[r]^{|_{\cX_{\bar s}}} &\langle \cP|_{\cX_{\bar s}}\rangle \\}\end{equation}
induces a commutative diagram of algebraic groups 
\begin{equation}\label{Diag:Group}\xymatrix{G(\cQ|_{\cX_{\bar \eta}})\ar@{^{(}->}[r]&G (\cQ)  &G( \cQ|_{\cX_s}) \ar@{_{(}->}[l]&G( \cQ|_{\cX_{\bar s}})  \ar@{_{(}->}[l]\ar@/_1.5pc/[lll]_{ csp_{\bar \eta,\bar s}}\\
G(\cP|_{\cX_{\bar \eta}})^{\semis}\ar@{->>}[u] \ar[r]&G( \cP)^{\semis}  \ar@{->>}[u]& &   \\
G(\cP|_{\cX_{\bar \eta}})\ar@{->>}[u] \ar@{^{(}->}[r]&G( \cP)  \ar@{->>}[u]&G( \cP|_{\cX_s})  \ar@{_{(}->}[l] \ar@{->>}[uu]&G(\cP|_{\cX_{\bar s}}) \ar@{_{(}->}[l]\ar@{->>}[uu]\ar@/^1.5pc/[lll]^{ csp_{\bar \eta,\bar s}}  \\}\end{equation}
As $G(\cP|_{\cX_{\bar \eta}})^{\semis}\twoheadrightarrow G (\cQ) $ is an isogeny, $G(\cQ|_{\cX_{\bar \eta}})\rightarrow G (\cQ) $ is an isomorphism. In particular, $\cQ|_{\cX_{\bar \eta}}$ is a simple object in $P(\cX_{\bar \eta})$ and every object in $\langle \cQ|_{\cX_{\bar \eta}}\rangle$ has trivial determinant. By Corollary  \ref{Cor:MTBis} (1) applied to $\cQ$ up to replacing $S$ by a non-empty open subscheme, one may assume that for every $s\in S$ the cospecialization morphism $ csp_{\bar \eta,\bar s}:G( \cQ|_{\cX_{\bar s}})\hookrightarrow G( \cQ|_{\cX_{\bar \eta}})$ is an isomorphism so that  one has a canonical commutative diagram
\begin{equation}\label{Diag:GroupSpec}\xymatrix{G(\cQ|_{\cX_{\bar \eta}}) &  &G( \cQ|_{\cX_{\bar s}})  \ar[ll]_{ csp_{\bar \eta,\bar s}}^\simeq\\
G(\cP|_{\cX_{\bar \eta}})^{\semis}\ar@{->>}[u]   &G(\cP|_{\cX_{\bar s}}) /(R(G(\cP|_{\cX_{\bar \eta}}))\cap G(\cP|_{\cX_{\bar s}})) \ar@{_{(}->}[l]  & G(\cP|_{\cX_{\bar s}})^{\semis} \ar@{->>}[u]\\
&G(\cP|_{\cX_{\bar s}}) /(R(G(\cP|_{\cX_{\bar \eta}}))\cap G(\cP|_{\cX_{\bar s}}))^\circ \ar@{->>}[u] \ar@{->>}[ur]& \\
G(\cP|_{\cX_{\bar \eta}})\ar@{->>}[uu]   && G(\cP|_{\cX_{\bar s}}). \ar@{->>}[uu]\ar@{_{(}->}[ll]^{ csp_{\bar \eta,\bar s}}\ar@{->>}[ul]\ar@{->>}[uu]  \\}\end{equation}
In particular, $$\begin{tabular}[t]{ll}
$\Sdim(G(\cP|_{\cX_{\bar s}})^{\semis})\geq \Sdim(G(\cP|_{\cX_{\bar \eta}})^{\semis})$&$\geq \Sdim(G(\cP|_{\cX_{\bar s}}) /(R(G(\cP|_{\cX_{\bar \eta}}))\cap G(\cP|_{\cX_{\bar s}})))$\\
&$= \Sdim(G(\cP|_{\cX_{\bar s}}) /(R(G(\cP|_{\cX_{\bar \eta}}))\cap G(\cP|_{\cX_{\bar s}}))^\circ)$\\
&$\geq \Sdim(G(\cP|_{\cX_{\bar s}})^{\semis})$
\end{tabular}$$
which, as $G(\cP|_{\cX_{\bar \eta}})$ - hence $G(\cP|_{\cX_{\bar \eta}})^{\semis}$ -  are connected, imposes  that the  morphisms 
   $$ 
G(\cP|_{\cX_{\bar s}}) /(R(G(\cP|_{\cX_{\bar \eta}}))\cap G(\cP|_{\cX_{\bar s}}))   \stackrel{\simeq}{\rightarrow}G(\cP|_{\cX_{\bar \eta}})^{\semis} $$
and 
$$G(\cP|_{\cX_{\bar s}}) /(R(G(\cP|_{\cX_{\bar \eta}}))\cap G(\cP|_{\cX_{\bar s}}))^\circ  \stackrel{\simeq}{\rightarrow}   G(\cP|_{\cX_{\bar s}})^{\semis},$$
 are isomorphisms.
This concludes
the proof of the first part of   Corollary  \ref{Cor:MTBis} (2). The second part  when $G(\cP|_{\cX_{\bar \eta}})$ is semisimple tautologically follows from the first part as, then, $G(\cP|_{\cX_{\bar \eta}})^\circ=G(\cP|_{\cX_{\bar \eta}})^{\circ,\semis}$ while the second  part  when $G(\cP|_{\cX_{\bar \eta}})$ is reductive  follows from the first part  and the fact that, for every $s\in S $ such that $G(\cP|_{\cX_{\bar s}})^\circ\subset G(\cP|_{\cX_{\bar \eta}})^\circ $ factors through an isogeny $G(\cP|_{\cX_{\bar s}})^{\circ,\semis} \twoheadrightarrow  G(\cP|_{\cX_{\bar \eta}})^{\circ,\semis} $, the arrows $(*\hbox{\rm-}\bar s)$, $(*\hbox{\rm-}\bar \eta)$ and the right vertical arrow in the canonical commutative diagram
  $$\xymatrix{G(\cP|_{\cX_{\bar s}} )^{\circ,\der}\ar@{^{(}->}[r]\ar@{^{(}->}[d]\ar@/^1.5pc/[rr]^{(*\hbox{\rm-}\bar s)}&G(\cP|_{\cX_{\bar s}} )^\circ\ar@{->>}[r]\ar@{^{(}->}[d]&G(\cP|_{\cX_{\bar s}} )^{\circ,\semis}\ar[d] \\
G(\cP|_{\cX_{\bar \eta}} )^{\circ,\der}\ar@{^{(}->}[r] \ar@/_1.5pc/[rr]_{(*\hbox{\rm-}\bar \eta)}&G(\cP|_{\cX_{\bar\eta}} )^\circ\ar@{->>}[r]&G(\cP|_{\cX_{\bar \eta}} )^{\circ,\semis}.}$$
 
are isogenies.
\end{itemize}

 \subsection{Proof of  Proposition \ref{Prop:Hilbert}}\label{Sec:ProofHilbert} Let $k$ be a field of characteristic $0$, $S $ a  smooth  geometrically connected variety over $k$  with generic point $\eta$ and   $f:\cX\rightarrow S$ an abelian scheme.   Let $P^\ULA(\cX/S)_0\subset P^\ULA(\cX/S)$ denote the essential image of 
 $$0_*:\Loc(S)=\Perv^\ULA(S/S)\rightarrow\Perv^\ULA(\cX/S) \rightarrow P^\ULA(\cX/S)$$
 and for every $\cP\in P^\ULA(\cX/S)$, consider the full subcategory $\langle \cP\rangle_0:=\langle \cP\rangle\cap P^\ULA(\cX/S)_0\subset \langle \cP\rangle$. 
 From Lemma \ref{Lem:SpecAr}, the equivalence of Tannakian categories $-|_{\cX_\eta}:\langle \cP\rangle\tilde{\rightarrow}\langle \cP|_{\cX_\eta}\rangle$ restricts to an equivalence 
 
 $$-|_{\cX_\eta}:\langle \cP\rangle_0\tilde{\rightarrow}\langle \cP|_{\cX_\eta}\rangle_0.$$
 This yields an explicit categorical  description of the morphism $G(\langle \cP|_{\cX_s}\rangle_0)\rightarrow   G(\langle \cP|_{\cX_\eta}\rangle_0)$ as the composite $G(\langle \cP|_{\cX_s}\rangle_0)\rightarrow   G(\langle \cP \rangle_0)\tilde{\leftarrow}   G(\langle \cP|_{\cX_\eta}\rangle_0)$ arising from the diagram of Tannakian categories
 $$\langle \cP|_{\cX_s}\rangle_0\stackrel{|_{\cX_s}}{\leftarrow}\langle \cP \rangle_0\stackrel{|_{\cX_\eta}}{\rightarrow}\langle \cP|_{\cX_\eta}\rangle_0$$

\noindent Assume furthermore  $S_\cP^{\geo}=\emptyset$ that is, for every $s\in S$, the cospecialization  morphism $G(\cP|_{\cX_{\bar s}} )\rightarrow G(\cP|_{\cX_{\bar \eta}} )$ is an isomorphism so that  the morphism 
 $G( \langle\cP|_{\cX_s}\rangle_0 )\hookrightarrow G(\langle \cP \rangle_0)$
 is a closed immersion. Then every $\otimes$-generator $0_*\cL$ of $\langle \cP \rangle_0$ yields a $\otimes$-generator $(0_*\cL)|_{\cX_s}\simeq 0_{s *}s^*\cL$ of $\langle\cP|_{\cX_s}\rangle_0 $ and from the canonical diagram of Tannakian categories

 $$\xymatrix{\langle \cL\rangle\ar@{^{(}->}[r]\ar[d]_\simeq^{0_*}\ar@/^2pc/[rrr]&\Perv^\ULA(S/S)\ar[r]^{s^*}\ar[d]_\simeq^{0_*}&\Perv(s)\ar[d]_\simeq^{0_{s*}}&\langle s^*\cL\rangle\ar@{_{(}->}[l]\ar[d]_\simeq^{0_{s*}}\\
\langle 0_*\cL\rangle\ar@{^{(}->}[r] \ar@/_2pc/[rrr]&P^\ULA(\cX/S)_0\ar[r]^{|_{\cX_s}} &P(\cX_s)_0&\langle 0_{s*}s^*\cL \rangle\simeq\langle  (0_*\cL)|_{\cX_s} \rangle\ar@{_{(}->}[l] }$$
 \medskip
 
\noindent  the morphism 
 $G( \langle\cP|_{\cX_s}\rangle_0 )\hookrightarrow G(\langle \cP \rangle_0)$ also describes the functor of Tannakian categories $s^*:\langle \cL\rangle\rightarrow \langle s^*\cL\rangle$ hence corresponds to the embedding $$G(\cL)_s\hookrightarrow G(\cL)\hookrightarrow \SGL(\cL_{\bar s})$$
 of the Zariski-closures of  the images $$\Pi(\cL)_s\subset \Pi(\cL)\subset \SGL(\cL_{\bar s})$$ of $\pi_1(s,\bar s)\rightarrow \pi_1(S,\bar s)$ acting on $\cL_{\bar s}$ respectively. \\

\noindent This observation yields the following.

\begin{lemme}\label{MC3Bis} Assume $S$ has dimension $>0$. Let $\cP\in\Perv^\ULA(\cX/S)$ with $S_{\cP}^{\geo}=\emptyset $. Then, 
  \begin{enumerate}[leftmargin=*, parsep=0cm,itemsep=0.2cm,topsep=0.2cm]
  \item if  $k$ is Hilbertian,   there exists an integer $d\geq 1$ such that $|S|^{\leq d}\setminus S_{\cP}\cap |S|^{\leq d}$ is infinite.
  \item if $S$ is a curve, $k$ is finitely generated over $\Q$ and $G(\cP|_{\cX_{\bar k}})$ is semisimple,  for every integer $d\geq 1$,  $ S^\circ_{\cP}\cap |S|^{\leq d}$ is finite.
\end{enumerate}
 \end{lemme}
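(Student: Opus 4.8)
The plan is to reduce both assertions to the behaviour of the monodromy of a single $\overline{\Q}_\ell$-local system on $S$, and then to feed them, respectively, into Hilbert's irreducibility theorem and into the Cadoret--Tamagawa finiteness theorem for exceptional loci of $\ell$-adic representations over curves. As explained just before the statement, I would fix a local system $\cL\in\Loc(S)$ with $0_*\cL$ a $\otimes$-generator of $\langle\cP\rangle_0$ — such $\cL$ genuinely lives on $S$ by Lemma \ref{Lem:SpecAr} — so that $G(\langle\cP\rangle_0)=G(\cL)=\overline{\Pi(\cL)}^{\mathrm{Zar}}$ and $G(\langle\cP|_{\cX_s}\rangle_0)=G(\cL)_s=\overline{\Pi(\cL)_s}^{\mathrm{Zar}}$, where $\Pi(\cL)$, resp.\ $\Pi(\cL)_s$, is the image of $\pi_1(S,\bar\eta)$, resp.\ of $\pi_1(s,\bar s)\to\pi_1(S,\bar s)$, in the relevant general linear group. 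Since $S_{\cP}^{\geo}=\emptyset$, for every $s$ the cospecialization map $csp_{\bar\eta,\bar s}$ on the left of (\ref{Diag:Rep}) is an isomorphism; comparing the two exact rows of (\ref{Diag:Rep}), and using that $G(\langle\cP|_{\cX_s}\rangle_0)\hookrightarrow G(\langle\cP\rangle_0)$ is a closed immersion, one gets that $s\in S_{\cP}$ iff $G(\cL)_s\subsetneq G(\cL)$, and $s\in S_{\cP}^{\circ}$ iff $G(\cL)_s^{\circ}\subsetneq G(\cL)^{\circ}$. Thus (1) (resp.\ (2)) becomes a statement about the locus where the arithmetic monodromy group $G(\cL)$ (resp.\ its identity component) degenerates.

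For (1): the group $\Pi(\cL)$, a closed subgroup of $\GL_n$ over a finite extension of $\Q_\ell$, is a compact $\ell$-adic analytic group, hence topologically finitely generated with open Frattini subgroup $\Phi:=\Phi(\Pi(\cL))$. I would set $Q:=\Pi(\cL)/\Phi$, a finite group, so that $\Phi$ defines a finite \'etale cover of $S$ with monodromy group $Q$. Replacing $k$ by a finite extension $k_0$ over which $S$ has a rational point — still Hilbertian, and producing closed points of $S$ of degree at most $d:=[k_0:k]$ — Hilbert's irreducibility theorem then provides infinitely many closed points $s$ with $[\kappa(s):k]\le d$ at which this cover remains irreducible, i.e.\ $\Pi(\cL)_s\twoheadrightarrow Q$. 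For such $s$ one has $\Pi(\cL)_s\cdot\Phi=\Pi(\cL)$, hence $\Pi(\cL)_s=\Pi(\cL)$ by the defining property of the Frattini subgroup of a topologically finitely generated profinite group, and therefore $G(\cL)_s=G(\cL)$, i.e.\ $s\notin S_{\cP}$. This exhibits infinitely many points of $|S|^{\le d}$ outside $S_{\cP}$.

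For (2): the surjection $G(\cP|_{\cX_{\bar k}})\twoheadrightarrow G(\langle\cP|_{\cX_{\bar k}}\rangle_0)=\overline{\Pi^{\geo}(\cL)}^{\mathrm{Zar}}$, with $\Pi^{\geo}(\cL)\subset\Pi(\cL)$ the geometric monodromy (image of $\pi_1(S_{\bar k})$), shows that the geometric monodromy group of $\cL$ has semisimple identity component, and it is automatically "generic", being the image of $\pi_1$ of the whole curve $S_{\bar k}$. For a closed point $s$ of the curve $S$, $\kappa(s)$ is finitely generated over $\Q$, so the Galois image on any geometrically constant subquotient of $\cL$ is open in the corresponding image of $\Gal(\overline{k}/k)$, hence Zariski-dense in the arithmetic part of $G(\cL)^{\circ}$; on the geometric part, whose monodromy group is semisimple, a closed subgroup is Zariski-dense iff open. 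It follows that $G(\cL)_s^{\circ}\subsetneq G(\cL)^{\circ}$ precisely when the image of $\pi_1(s)$ fails to be open in $\Pi(\cL)$, and the finiteness of the set of such $s$ in $|S|^{\le d}$ is exactly the Cadoret--Tamagawa finiteness theorem for the exceptional locus of an $\ell$-adic representation of the fundamental group of a smooth curve over a finitely generated field of characteristic $0$ with semisimple geometric monodromy group. The two substantive external inputs are thus Hilbert irreducibility with control on the degree of the specialization points (for (1)) and the Cadoret--Tamagawa theorem (for (2)); I expect the main obstacle to be the last reduction in (2), namely isolating the geometric from the arithmetic part of the monodromy and matching precisely the hypotheses of that theorem — in particular controlling the arithmetic part of $\Pi(\cL)_s$ and dealing with the possible disconnectedness of the geometric monodromy group.
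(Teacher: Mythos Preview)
Your overall strategy matches the paper's: reduce to the monodromy of a single local system $\cL$ on $S$ that $\otimes$-generates $\langle\cP\rangle_0$, then invoke Hilbert irreducibility plus a Frattini argument for (1), and Cadoret--Tamagawa for (2). Part (1) is essentially identical to what the paper does.

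For (2) the paper takes a somewhat different and more direct route than your arithmetic/geometric splitting. Rather than arguing that $G(\cL)_s^\circ\subsetneq G(\cL)^\circ$ is \emph{equivalent} to non-openness of $\Pi(\cL)_s$ in $\Pi(\cL)$ (only one implication is needed, and the converse you sketch is where things get delicate), the paper simply applies \cite[Thm.~1]{UOI2} in the form: if the representation is GLP (every open subgroup of the geometric image has finite abelianization), then $\Pi(\cL)_s$ is open in $\Pi(\cL)$ for all but finitely many $s\in|S|^{\le d}$, and openness immediately gives $G(\cL)_s^\circ=G(\cL)^\circ$. The work is then entirely in verifying the GLP hypothesis from the assumption that $G(\cP|_{\cX_{\bar k}})$ (hence $G(\cL|_{S_{\bar k}})$) is semisimple. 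The subtlety you correctly anticipate is that $\cL$ only lives over some finite extension $Q_\ell/\Q_\ell$, so semisimplicity is known for the Zariski closure of $\Pi(\cL|_{S_{\bar k}})$ inside $\GL_r$ over $Q_\ell$, whereas the GLP criterion and \cite{UOI2} require an embedding into $\GL_N(\Z_\ell)$ and semisimplicity of the Zariski closure over $\Q_\ell$. The paper handles this by first embedding the compact group $\Pi(\cL)$ into some $\GL_N(\Z_\ell)$ via \cite{Lubotsky}, and then proving a short Weil-restriction lemma: for $\Pi\subset G_0(K)=(\mathrm{Res}_{K|k}G_0)(k)$, the Zariski closure of $\Pi$ in $G_{0,K}$ becomes isomorphic, after base change to $K$, to the Zariski closure of $\Pi$ in $\mathrm{Res}_{K|k}G_0$. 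This transfers semisimplicity from the $Q_\ell$-closure to the $\Q_\ell$-closure and completes the GLP verification. Your splitting approach could likely be made to work, but this Weil-restriction step is the concrete missing ingredient.
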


 \begin{proof} From the exact  specialization diagram  (\ref{Diag:Rep}) and the fact that, by our assumptions, for every $s\in |S|$ the morphisms $G(\cP|_{\cX_{\bar s}}) \rightarrow G(\cP|_{\cX_{\bar \eta}} )$  is an isomorphism and the morphism  $G( \langle\cP|_{\cX_s}\rangle_0 )\rightarrow G(\langle \cP \rangle_0)$ is a closed immersion, it is enough to prove that, under the assumptions
  \begin{itemize}[leftmargin=*, parsep=0cm,itemsep=0.2cm,topsep=0.2cm]
  \item   in (1): there exists an integer $d\geq 1$ such that for infinitely many  $s\in |S|^{\leq d} $  the closed immersion $G( \langle\cP|_{\cX_s}\rangle_0 )\hookrightarrow G(\langle \cP \rangle_0)$ is an isomorphism. \\
  
  \noindent   This  follows from the defining property of Hilbertian fields and a Frattini argument \cite[\S 10.6]{LMW}, which ensures  that there exists an integer $d\geq 1$ such that for infinitely many  $s\in |S|^{\leq d} $ one has $\Pi(\cL)_s= \Pi(\cL)$.

  \item in (2): for every integer $d\geq 1$ and for all but finitely many  $s\in |S|^{\leq d} $  the closed immersion $G( \langle\cP|_{\cX_s}\rangle_0 )^\circ\hookrightarrow G(\langle \cP \rangle_0)^\circ$ is an isomorphism. \\

  \noindent   This  follows from \cite[Thm.~1]{UOI2}, which asserts that if $\rho: \pi_1(S)\rightarrow \SGL_N(\Z_\ell)$ is a continuous GLP representation then, for every integer $d\geq 1$ and all but finitely many $s\in |S|^{\leq d}$, $\rho(\pi_1(s))\subset \rho(\pi_1(S))$ is open. The GLP condition means that every open subgroup of $ \Pi:=\rho(\pi_1(S_{\bar k}))$ has finite abelianization or, equivalently, that the Lie algebra $\SLie(\Pi)$ of $\Pi$ (as an $\ell$-adic Lie group) is perfect. This is for instance the case if (*)  one can realize $\Pi$ as a closed subgroup $\Pi\subset H_0(\Q_\ell)$ of the group of $\Q_\ell$-points of an algebraic group $H_0$ over $\Q_\ell$ such that the Zariski-closure of $\Pi$ in $H_0$ is  semisimple.  The assumption that $G(\cP|_{\cX_{\bar k}})$ is semisimple ensures that one can reduce to this situation. Indeed, as   $G(\cL|_{S_{\bar k}})$ is a quotient of $G(\cP|_{\cX_{\bar k}})$, $G(\cL|_{S_{\bar k}})$ is semisimple as well and, as there exists a finite Galois   extension $Q_\ell$ of $\Q_\ell$ such that $\cL$ arises from a $Q_\ell$-local system on $S$, one may assume $\Pi(\cL)\subset \SGL(\cL_{\bar s})\simeq \SGL_r(Q_\ell)$. But as   $\SGL_r(Q_\ell)$ has a natural structure of  Lie group over $\Q_\ell$, so has  $\Pi(\cL) $   \cite[L.G., Chap. V, \S 9]{LGLA} hence, as $\Pi(\cL) $ is also compact being the continuous image of a profinite group, it admits a faithful   embedding into $\SGL_N(\Z_\ell)$ for some $N\geq 1$ \cite[Prop. 4]{Lubotsky}. To apply  \cite[Thm. 1]{UOI2} to the resulting $\ell$-adic representation $ \pi_1(S)\twoheadrightarrow \Pi(\cL)\subset \SGL_N(\Z_\ell)$, it is thus enough to show that $ \Pi:=\Pi(\cL|_{S_{\bar k}})$ satisfies the criterion (*). This follows from the claim below, applied with $K/k=Q_\ell/\Q_\ell$, $\Pi:=\Pi(\cL|_{S_{\bar k}})$ and $G_0:=\SGL_{r,\Q_\ell}$.
  \end{itemize}

 \noindent\textbf{Claim.} \textit{Let $K/k$ be a finite Galois extension and write $R:=\hbox{\rm Res}_{K|k}:Sch/_K\rightarrow Sch/_k$ for the Weil restriction functor. Let  $G_0$ be an algebraic group over $k$ and set $G:=G_{0,K}$. Let  $\Pi\subset G(K)=(RG)(k)$ be a subgroup. Let $\iota: H\hookrightarrow G$ denote the Zariski closure of $\Pi$ in $G $ and $\iota_0:H_0\hookrightarrow RG $ the Zariski-closure of $\Pi$ in $RG$. Write $ad:G_0\hookrightarrow RG$ for the adjunction morphism. Then the   morphism $c:H\stackrel{\iota}{\hookrightarrow}  G\stackrel{ad_K}{\hookrightarrow} (RG)_K$ factors through an isomorphism} 
 $$\xymatrix{H\ar@{^{(}->}[r]^\iota \ar@{.>}[drr]^\simeq_c &G\ar@{^{(}->}[r]^{ad_K} &(RG)_K\\
 &&H_{0,K}.\ar@{_{(}->}[u]_{\iota_{0,K}}}$$
 \noindent\textit{Proof of the claim.} At the level of $K$-points, the diagram $$\xymatrix{H\ar@{^{(}->}[r]^\iota   &G\ar@{^{(}->}[r]^{ad_K} &(RG)_K
 & H_{0,K}\ar@{_{(}->}[l]_{\iota_{0,K}}}$$
 induces a commutative diagram
 $$\xymatrix{H(K)\ar@{^{(}->}[r]   &G(K)\ar@{^{(}->}[r] &(RG)_K(K)
 & H_{0,K}(K)\ar@{_{(}->}[l] \\
 &\Pi\ar@{_{(}->}[ul]\ar@/_0.5cm/@{^{(}->}[rr]&RG(k)\ar@{_{(}->}[u]\ar@{=}[ul]&H_0(k).\ar@{_{(}->}[u]\ar@{_{(}->}[l] }$$
As $\Pi$ is Zariski-dense in $H$, this already shows the existence of the factorization $c:H\hookrightarrow H_{0,K}$. On the other hand, at the level of $k$-points the diagram 
$$\xymatrix{RH\ar@{^{(}->}[r]^{R\iota}  &RG &H_0 \ar@{_{(}->}[l]_{\iota_0} }$$
 induces a commutative diagram
$$\xymatrix{RH(k)=H(K)\ar@{^{(}->}[r]  &(RG)(k)=G(K) \\
\Pi \ar@{^{(}->}[r]\ar@{_{(}->}[u]&H_0(k)\ar@{_{(}->}[u] }$$
As $\Pi$ is Zariski-dense in $H_0$, this shows that $H_0 \stackrel{\iota_0}{\hookrightarrow} RG$ factors as $\iota_0: H_0\stackrel{d_0}{\hookrightarrow} RH\stackrel{R\iota}{\hookrightarrow} RG$.  One thus gets 
 $$\xymatrix{RH\ar@{^{(}->}[r]^{R\iota} \ar@/_1cm/@{^{(}->}[drr]_{Rc} &RG\ar@{^{(}->}[r]^{R(ad_K)} &R((RG)_K)\\
 &H_0\ar@{^{(}->}[r]^{ad}\ar@{_{(}->}[u]_{\iota_0}\ar@{_{(}->}[ul]_{d_0}&R(H_{0,K})\ar@{_{(}->}[u]_{R(\iota_{0,K})}}$$
Let $d:H_{0,K}\rightarrow H$ denote the morphism corresponding by functoriality, to $d_0:H_0\hookrightarrow RH$. Then, by construction,  $c\circ d=Id: H_{0,K}\tilde{\rightarrow} H_{0,K}$ and $d\circ c=Id:H\tilde{\rightarrow} H $.  \end{proof}

\noindent  Proposition \ref{Prop:Hilbert} (and its strengthening when $S$ is a curve and $k$ is finitely generated over $\Q$) follows from   Lemma \ref{MC3Bis} applied to the restriction of $\cP$ to $\cX\times_SU$, where $U\subset S$ denotes the complement of the Zariski-closure of $S_{\cP}^{\geo}$ in $S$. 
 
 \section{Geometric applications}\label{Sec:GeoApp}\textit{}\\
 \noindent Let $\cX\rightarrow S$ be an abelian scheme  and let $ \cY\hookrightarrow \cX $ be a closed subscheme, smooth and  geometrically connected over $S$. \\
 \subsection{Preliminaries}\label{Par:Stab} As $S$ is smooth, $ \cX\rightarrow S$ is projective \cite[Thm. XI.1.4]{MichelAmple} hence  $\cX$ carries a line bundle $\cO_{\cX}(1)$ which is very ample with respect to $\cX\rightarrow S$. Let $P\in \Q[T]$ denote the Hilbert polynomial of $ \cY_{\bar \eta}\hookrightarrow \cX_{\bar \eta}$ with respect to $\cO_{\cX}(1)|_{\cX_{\bar \eta}}$ and let $\frak{H}_{\cX/S}^P\rightarrow S$ be the Hilbert scheme  classifying closed subschemes of $\cX\times_ST$ which are flat over $T$ and with constant Hilbert polynomial $P$  \cite[Thm. 3.2]{GrothendieckHilbert}. By construction, $\cX $ acts   by translation on  $\frak{H}_{\cX/S}^P$ over $S$. Let $\hbox{\rm \small $[\cY]$}\in \frak{H}_{\cX/S}^P(S)$ be the $S$-point corresponding to $\iota:\cY\hookrightarrow \cX$ and consider the corresponding morphism of $S$-schemes $$\phi_{\hbox{\rm \tiny $[\cY]$}}:\cX\rightarrow \frak{H}_{\cX/S}^P\times_S\frak{H}_{\cX/S}^P,\;\;  x\mapsto (\hbox{\rm \small $[\cY]$},\hbox{\rm \small $[\cY$}+x\hbox{\rm \small $]$}).$$
Let also $$\Delta:\frak{H}_{\cX/S}^P\hookrightarrow \frak{H}_{\cX/S}^P\times_S\frak{H}_{\cX/S}^P$$
denote the diagonal embedding, which is a closed immersion as $\frak{H}_{\cX/S}^P\rightarrow S$ is projective. Define the stabilizer $\hbox{\rm Stab}_{\cX/S}(\cY)$ of $\cY$ in $\cX$ as the fiber product  
$$\xymatrix{\hbox{\rm Stab}_{\cX/S}(\cY)\ar@{^{(}->}[r]\ar[d]\ar@{}[dr]|{\square}& \cX\ar[d]^{\phi_{\hbox{\rm \tiny $[\cY]$}}}\\
\frak{H}_{\cX/S}^P\ar@{^{(}->}[r]_\Delta& \frak{H}_{\cX/S}^P\times_S\frak{H}_{\cX/S}^P.}$$
By construction $\hbox{\rm Stab}_{\cX/S}(\cY)\hookrightarrow \cX$ is a closed subgroup scheme of $\cX\rightarrow S$, whose formation commutes with arbitrary Noetherian base change $T\rightarrow S$. In particular, for every $t\in S$ one has 

\begin{equation}\label{Diag:Stab}\hbox{\rm Stab}_{\cX/S}(\cY)_{\bar t}=\hbox{\rm Stab}_{\cX_{\bar t}}(\cY_{\bar t}).\end{equation}

  \begin{lemme}\label{Lem:Geo} Let $ \cX\rightarrow S$ an abelian scheme and $ \cY\hookrightarrow \cX $ a closed subscheme, smooth, geometrically connected, and of relative dimension $d $ over $S$.  Then, 
  \begin{enumerate}[leftmargin=*, parsep=0cm,itemsep=0.2cm,topsep=0.2cm]
  \item the relative perverse sheaf $\cP:=\iota_*\overline{\Q}_{\ell,Y}[d]$ lies in $ \Perv^{\ULA}(\cX/S)$;
  \item Assume  furthermore $ \cY_{\bar \eta}\hookrightarrow \cX_{\bar \eta}$ has 
    \begin{enumerate}[leftmargin=*, parsep=0cm,itemsep=0.1cm,topsep=0.1cm, label=\roman*)]
  \item  ample normal bundle $\cN_{ \cY_{\bar \eta}/\cX_{\bar \eta}}$ then, after possibly replacing $S$ by a non-empty open subset, one may assume that  for all  $s\in S$,  $\cY_{\bar s}\hookrightarrow \cX_{\bar s}$ also has    ample normal bundle $\cN_{ \cY_{\bar s}/\cX_{\bar s}}$
  \item   trivial stabilizer $\hbox{\rm Stab}_{\cX_{\bar \eta}}(\cY_{\bar \eta})$ then, after possibly replacing $S$ by a non-empty open subset, one may assume that  for all  $s\in S$,  $\cY_{\bar s}\hookrightarrow \cX_{\bar s}$ also has  trivial stabilizer.
   \end{enumerate}
  \end{enumerate}
 \end{lemme}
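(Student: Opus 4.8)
The plan is to prove the three assertions separately, in each case reducing to a well-known fibrewise statement. For (1), since $\cY\to S$ is smooth of relative dimension $d$, the constant sheaf $\overline{\Q}_{\ell,\cY}$ is universally locally acyclic relative to $\cY\to S$ by \cite[Prop. 1.4.4]{laumon1981semi}, and $\overline{\Q}_{\ell,\cY}[d]$ restricts on each geometric fibre $\cY_{\bar s}$ to a perverse sheaf, $\cY_{\bar s}$ being smooth of dimension $d$ over $k(\bar s)$; by the fibrewise description of the relative perverse $t$-structure (\cite[Thm. 6.7]{HansenScholze} and Section \ref{Sec:KWRel}) this gives $\overline{\Q}_{\ell,\cY}[d]\in\Perv^{\ULA}(\cY/S)$. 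As $\iota:\cY\hookrightarrow\cX$ is a closed immersion, hence proper, $\iota_*$ preserves the ULA condition (\cite[p.643]{HansenScholze}), commutes with base change, and on every geometric fibre restricts to the perverse-$t$-exact pushforward along the closed immersion $\cY_{\bar s}\hookrightarrow\cX_{\bar s}$; hence $\cP:=\iota_*\overline{\Q}_{\ell,\cY}[d]$ restricts to a perverse sheaf on each $\cX_{\bar s}$, so $\cP\in\Perv^{\ULA}(\cX/S)$.

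For (2)(i), I would first note that a closed immersion of smooth $S$-schemes is regular, so the relative normal bundle $\cN_{\cY/\cX}$ is a vector bundle on $\cY$ whose formation (together with that of $\PP(\cN_{\cY/\cX})$ and $\cO(1)$) commutes with base change; in particular $\cN_{\cY/\cX}|_{\cY_{\bar s}}=\cN_{\cY_{\bar s}/\cX_{\bar s}}$. Since $\cY$ is closed in the abelian scheme $\cX\to S$, the morphism $\PP(\cN_{\cY/\cX})\to S$ is proper, and ampleness of an invertible sheaf being an open condition on the base of a proper morphism, the set of $s\in S$ for which $\cO(1)$ is ample on $\PP(\cN_{\cY/\cX})_s$ — equivalently for which $\cN_{\cY_{\bar s}/\cX_{\bar s}}$ is ample, ampleness being insensitive to field extension — is open. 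It contains $\eta$ by hypothesis, hence a non-empty open $U\subset S$; replace $S$ by $U$.

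For (2)(ii), set $H:=\hbox{\rm Stab}_{\cX/S}(\cY)$, a proper $S$-group scheme containing the zero section $0\colon S\hookrightarrow H$, with $H_{\bar s}=\hbox{\rm Stab}_{\cX_{\bar s}}(\cY_{\bar s})$ for all $s$ by \eqref{Diag:Stab}. By upper semicontinuity of fibre dimension and properness of $H\to S$, the locus $U_1:=\{s\in S:\dim H_s=0\}$ is open and contains $\eta$. Over $U_1$ the morphism $H\to S$ is quasi-finite and proper, hence finite, so $s\mapsto\dim_{k(s)}H^0(H_s,\cO_{H_s})$ — the fibre rank of the coherent sheaf $(H|_{U_1}\to U_1)_*\cO_H$ — is upper semicontinuous on $U_1$; it is always $\geq1$ (the identity section), and it equals $1$ exactly when $H_s=0(S)_s$, i.e. when $\cY_{\bar s}$ has trivial stabilizer. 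Since it equals $1$ at $\eta$, the set $U_2:=\{s\in U_1:\dim_{k(s)}H^0(H_s,\cO_{H_s})=1\}$ is a non-empty open subscheme of $S$, and replacing $S$ by $U_2$ finishes the proof.

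I do not expect a genuine obstacle here; the only point requiring a little care is (2)(ii), where the triviality locus must be cut out in two stages — first the finiteness locus (semicontinuity of fibre dimension), then the locus where the length of the stabilizer is $1$ (semicontinuity of the rank of $(H\to S)_*\cO_H$) — rather than in one step, since the image of the open complement $H\setminus 0(S)$ under the proper map $H\to S$ is in general neither open nor closed.
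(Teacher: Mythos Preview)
Your proof is correct and, for parts (1) and (2)(i), follows essentially the same route as the paper (the paper cites \cite[Lem. 3.6]{Barrett} rather than \cite{laumon1981semi} for the ULA property and spells out the base-change compatibility of the conormal sequence in more detail, but the substance is the same).

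For (2)(ii) you take a slightly different path. The paper simply observes that the zero section $0:S\to H:=\hbox{Stab}_{\cX/S}(\cY)$ is an isomorphism over $\eta$ (using that in characteristic $0$ the group scheme $H_\eta$ is reduced, so triviality at $\bar\eta$ descends to $\eta$) and then invokes the spreading-out result \cite[Thm.~8.10.5 (i)]{GrothendieckEGA4} to obtain an open $U\subset S$ over which $0|_U:U\to H|_U$ is an isomorphism. Your two-step argument via semicontinuity of fibre dimension and then of the degree of the finite morphism $H|_{U_1}\to U_1$ is equally valid and has the small advantage that the passage from $\bar\eta$ to $\eta$ is automatic (the length $\dim_{k(s)}\cO_{H_s}$ is invariant under field extension, no smoothness needed); the paper's approach is shorter. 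Your closing remark about needing two stages is a nice point: the one-step alternative via spreading out of isomorphisms is exactly what the paper does instead.
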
 
\begin{proof}  For (1), as $\cY\rightarrow S$ is smooth, $\overline{\Q}_{\ell,\cY}\in D^{\ULA}(\cY/S)$ \cite[3.6, Lemma (i)]{Barrett} and as $\cY\hookrightarrow \cX$ is proper, $\iota_*\overline{\Q}_{\ell,\cY}\in D^{\ULA}(\cX/S)$  \cite[3.6, Lemma (ii)]{Barrett}  hence   $\cP:=\iota_*\overline{\Q}_{\ell,\cY}[d]\in \Perv^{\ULA}(\cX/S)$.  Assertion (2)  ii) follows from   (\ref{Diag:Stab})  and \cite[Thm. 8.10.5. (i)]{GrothendieckEGA4}. For assertion (2) i), under our assumptions for every $t\in S$ and with  the notation in the base-change diagram
$$ \xymatrix{\cY_{\bar t}\ar@{^{(}->}[r]^{\iota_{\bar t}}\ar@{}[dr]|{\square}\ar[d]\ar@/_1cm/[dd]_{\iota_{\cY_{\bar t}}}& \cX_{\bar t}\ar@{}[dr]|\square\ar[r] \ar[d] &\spec(k(\bar t))\ar[d]\\
\cY_t\ar@{^{(}->}[r]^{\iota_t}\ar@{}[dr]|{\square}\ar@{^{(}->}[d]_{\iota_{\cY_t}}& \cX_t\ar@{}[dr]|\square\ar[r] \ar@{^{(}->}[d]^{\iota_{\cX_t}}&\spec(k(t))\ar[d]\\
\cY\ar@{^{(}->}[r]^{\iota} & \cX \ar[r]&S},$$ the canonical morphisms 
\begin{equation}\label{Diag:Ample}\iota_{\cY_t}^*\cN_{\cY/\cX}\rightarrow \cN_{\cY_t/\cX_t} ,\;\; \iota_{\cY_{\bar t}}^*\cN_{\cY/\cX} \rightarrow \cN_{\cY_{\bar t}/\cX_{\bar t}} \end{equation}
are isomorphisms. Indeed, applying $\iota_{\cY_t}^*$ to the short exact sequence of locally free $\cO_{\cY}$-modules \cite[\S 6.3, Prop. 3.13]{LiuAG}
$$0\rightarrow \mathcal{C}_{\cY/\cX}\rightarrow \iota^*\Omega^1_{\cX|S}\rightarrow \Omega^1_{\cY|S}\rightarrow 0$$
  and using the canonical identifications  $$\iota_{\cY_t}^*\Omega^1_{\cY|S}\simeq \Omega^1_{\cY_t|k(t)},\;\; \iota_{\cY_t}^* \iota^*\Omega^1_{\cX|S}\simeq  \iota_t^*\iota_{\cX_t}^*\Omega^1_{\cX|S}\simeq \iota_t^* \Omega^1_{\cX_t|k(t)},$$
one gets the short exact sequence of locally free $\cO_{\cY_t}$-modules
$$0\rightarrow \iota_{\cY_t}^*\mathcal{C}_{\cY/\cX}\rightarrow \iota_t^*\Omega^1_{\cX_t|k(t)}\rightarrow \Omega^1_{\cY_t|k(t)}\rightarrow 0,$$
which   yields  
 $ \iota_{\cY_t}^*\mathcal{C}_{\cY/\cX}\simeq   \mathcal{C}_{\cY_t/\cX_t},$
whence the assertion, by dualizing. On the other hand, as $\cN_{\cY_{\bar \eta}/\cX_{\bar \eta}} (\simeq (\cN_{\cY/\cX})|_{\cY_{\bar \eta}})$ is ample, by fpqc descent of ampleness, $(\cN_{\cY/\cX})|_{\cY_{\eta}}$ is ample \cite[\href{https://stacks.math.columbia.edu/tag/0D2P}{Tag 0D2P}]{stacks-project}; the assertion thus follows from \cite[Prop. 4.4]{HartshorneAmpleness}.  \end{proof}

   \subsection{Sample of rigidity phenomena} We give here two examples of rigidity phenomena, building on  the classification results of  \cite{JKLM} and Corollary  \ref{Cor:MTBis} (2).\\
   
   \noindent  For an abelian variety $X$ over a field $K$ of characteristic $0$ and a closed subvariety $Y\subset X$, smooth, geometrically connected and of dimension $d\geq 2$ over $K$,  one says that $Y$ is:
 \begin{itemize}[leftmargin=*, parsep=0cm,itemsep=0.1cm,topsep=0.1cm] 
\item    a product if there exist closed subvarieties $Y_1,Y_2\subset X$, smooth over $K$ and of dimension $>0$, such that the sum map $+:Y_1\times Y_2\rightarrow X$ induces an isomorphism $+:Y_1\times Y_2\tilde{\rightarrow} Y$;
   \item a symmetric power of a curve if there is a closed smooth irreducible curve $C\hookrightarrow X$  such that the sum morphism Sym$^dC\rightarrow X$ is a closed embedding with image $Y$.
   \end{itemize}
   \noindent Note that if $K$ is algebraically closed and $L/K$ is a field extension, then $Y$ is a product (resp. a symmetric power of a curve) if and only if $Y\times_KL$ is. The only if assertion is straightforward and the if one follows from spreading out and specialization, using Hilbert Nullstellensatz.
 \begin{corollaire}\label{Cor:GeoProduct} Let $ \cX\rightarrow S$ an abelian scheme of relative dimension $g\geq 3$ and $ \cY\hookrightarrow \cX $ a closed subscheme, smooth and geometrically connected  over $S$. Assume $ \cY_{\bar \eta}\hookrightarrow \cX_{\bar \eta}$ has ample normal bundle  and   trivial stabilizer.  Then the set of  all  $s\in S$ such that $\cY_{\bar s}$ is a product is Zariski-dense in $S$ (if and) only if $\cY_{\bar \eta}$ is itself a product.
 \end{corollaire}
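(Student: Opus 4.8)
The plan is to deduce the statement from the classification of smooth subvarieties of abelian varieties with ample normal bundle obtained in \cite{JKLM}, combined with the non-density of the degeneracy locus furnished by Corollary \ref{Cor:MTBis}. As both "being a product" and the hypotheses are insensitive to extensions of algebraically closed base fields (see the remark preceding the statement), one first reduces to $k=\bar k$. One may also assume $\dim S>0$ and that the relative dimension $d$ of $\cY\to S$ satisfies $d\ge 2$: for $d\le 1$ no fibre $\cY_{\bar s}$ can be a product and the statement is vacuous; and $d\le g-1$ since the normal bundle has positive rank, so $2\le d\le g-1$, which is consistent with $g\ge 3$. The parenthetical "if" direction is the easy one: if $\cY_{\bar\eta}=\cY_1\times\cY_2$ via the sum morphism, one spreads $\cY_1,\cY_2\hookrightarrow\cX_{\bar\eta}$ and the isomorphism $\cY_1\times\cY_2\tilde{\rightarrow}\cY_{\bar\eta}$ out over a non-empty open $U\subset S$, so that $\cY_{\bar s}$ is a product for every $s\in U$ and the locus in question even contains a dense open subscheme.

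For the main direction, set $\cP:=\iota_*\overline{\Q}_{\ell,\cY}[d]$, which lies in $\Perv^{\ULA}(\cX/S)$ by Lemma \ref{Lem:Geo}(1). Since $\cY_{\bar\eta}$ has ample normal bundle and trivial stabilizer, $\cP|_{\cX_{\bar\eta}}\cong\IC_{\cY_{\bar\eta}}$ is simple in $\Perv(\cX_{\bar\eta})$, hence in $P(\cX_{\bar\eta})$, and by \cite{JKLM} its Tannaka group $G(\cP|_{\cX_{\bar\eta}})$ is semisimple; in particular $\det(\cP|_{\cX_{\bar\eta}})$ is torsion. I would then shrink $S$ to a non-empty open subscheme so that simultaneously: (i) every fibre $\cY_{\bar s}$ still has ample normal bundle and trivial stabilizer, by Lemma \ref{Lem:Geo}(2); (ii) $\cP|_{\cX_{\bar s}}$ is simple in $\Perv(\cX_{\bar s})$ for every $s\in S$, by Theorem \ref{MT}; and (iii) $S^{\geo}_{\cP}=\emptyset$, i.e.\ the cospecialization map $csp_{\bar\eta,\bar s}\colon G(\cP|_{\cX_{\bar s}})\hookrightarrow G(\cP|_{\cX_{\bar\eta}})$ is an isomorphism for every $s\in S$, by Corollary \ref{Cor:MTBis}(1) (applicable thanks to the torsion determinant). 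Since the set $\{s\in S:\cY_{\bar s}\text{ is a product}\}$ is Zariski-dense in the original $S$ and the new $S$ is dense open in it, that set meets the new $S$; choose a point $s$ in it.

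Now I would invoke the dictionary of \cite{JKLM}: for a smooth subvariety with ample normal bundle and trivial stabilizer of an abelian variety over an algebraically closed field, "being a product" is equivalent to a purely representation-theoretic condition on the pair $(G(\IC_\cY),\omega(\IC_\cY))$ — namely that this faithful representation is an external tensor product of two non-trivial irreducible representations along an isogeny-decomposition of the group — and this condition depends only on the $\SGL(\omega(\IC_\cY))$-conjugacy class of $G(\IC_\cY)$. Applying it to $\cY_{\bar s}$, the pair $(G(\cP|_{\cX_{\bar s}}),\omega_{\bar s})$ satisfies this condition; the cospecialization isomorphism of (iii) identifies this pair, as a subgroup-with-tautological-representation of $\SGL$, with $(G(\cP|_{\cX_{\bar\eta}}),\omega_{\bar\eta})$, so the latter satisfies it as well; by the converse implication in \cite{JKLM}, applied to $\cY_{\bar\eta}$, the subvariety $\cY_{\bar\eta}$ is itself a product.

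The main obstacle is the input from \cite{JKLM}: one needs both that the ample-normal-bundle hypothesis forces $G(\IC_\cY)$ to be semisimple — so that Corollary \ref{Cor:MTBis}(1) applies and the whole Tannaka group, not merely its image $G^{\circ,\semis}$, is rigid along the generic fibre — and the precise rigidity statement that the product decomposition of $\cY$ can be read off, in both directions, from the $\SGL$-conjugacy class of $(G(\IC_\cY),\omega(\IC_\cY))$. One must also check that the trivial-stabilizer hypothesis, besides guaranteeing that $\IC_\cY$ has no negligible constituents (so that it is a genuine simple object of $P(\cX_{\bar\eta})$), is exactly what makes this correspondence between the geometry of $\cY$ and the representation $(G,\omega)$ exact, by ruling out the degenerate translate/torsor cases. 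Everything else is formal bookkeeping with the cospecialization diagram of Section \ref{Sec:Specialization} and standard spreading-out arguments.
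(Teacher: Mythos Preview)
Your overall strategy—shrink $S$, invoke a rigidity statement from Corollary~\ref{Cor:MTBis}, then apply a fiberwise criterion from \cite{JKLM}—is the right one, but you have chosen the harder variant of each ingredient, and one of them is not available in the stated generality. You invoke Corollary~\ref{Cor:MTBis}(1), which requires $\cP|_{\cX_{\bar\eta}}$ to have torsion determinant, and then a criterion from \cite{JKLM} phrased in terms of the full pair $(G,\omega)$ up to $\SGL$-conjugacy. The paper instead applies Corollary~\ref{Cor:MTBis}(2)(b), which only needs $G(\cP|_{\cX_{\bar\eta}})$ reductive—immediate since $\cP|_{\cX_{\bar\eta}}$ is simple—together with \cite[Thm.~6.1]{JKLM}, whose statement is simply: under the ample-normal-bundle and trivial-stabilizer hypotheses, $\cY_{\bar t}$ is a product if and only if $G(\cP|_{\cX_{\bar t}})^{\circ,\der}$ is \emph{not} simple. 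Simplicity of a connected semisimple group is an isogeny invariant, so this matches what (2)(b) controls on the nose.

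The torsion-determinant hypothesis you flag as ``the main obstacle'' is a genuine extra input that \cite{JKLM} does not supply in this generality: the determinant of $\iota_*\overline{\Q}_\ell[d]$ in $P(\cX_{\bar\eta})$ is a skyscraper $\delta_x$, and without a symmetry assumption on $\cY$ there is no reason for $x$ to be a torsion point (compare the remark following Proposition~\ref{Prop:Hilbert}, where symmetry of $\cY$ is singled out precisely to force $G$ semisimple). So your appeal to Corollary~\ref{Cor:MTBis}(1) is not justified as stated. The fix is exactly what the paper does: replace (1) by (2)(b) and use the $G^{\circ,\der}$-criterion of \cite[Thm.~6.1]{JKLM}; your external-tensor-product formulation is then just a rephrasing of ``$G^{\circ,\der}$ not simple'' and no longer needs the whole group to be rigid.
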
 
\begin{proof}  From Lemma \ref{Lem:Geo} (1),    $\cP:=\iota_*\overline{\Q}_{\ell,\cY}[d]\in \Perv^{\ULA}(\cX/S)$ and from Lemma \ref{Lem:Geo} (2), up to replacing $S$ by a non-empty open subscheme, one may  assume that  for all $s\in S$, $\cY_{\bar s}\hookrightarrow \cX_{\bar s}$  has  ample normal bundle $\cN_{ \cY_{\bar s}/\cX_{\bar s}}$ and    trivial stabilizer.  The if assertion is by spreading out. To prove the only if assertion, observe first that, as $\cY_{\bar \eta}$ is smooth and irreducible  $\cP|_{\cX_{\bar \eta}}$ is simple - hence semisimple  \cite[Thm. 4.3.1 (ii)]{BBD} so that Corollary \ref{Cor:MTBis} (2) (b) applies.
The assertion thus follows from \cite[Thm. 6.1]{JKLM}, which asserts that, for any $t\in S$, $\cY_{\bar t}$ is  a product if and only if $G(\cP|_{\cY_{\bar t}})^{\circ\, \der}$ is not simple.  \end{proof}

\begin{remarque}\label{Rem:GeoProd}\textnormal{For  $\cY= \cX\rightarrow  S $, in general, it is not true that the set of all  $s\in S$ such that $\cX_{\bar s}$ is a product is Zariski-dense in $S$ (if and) only if $\cX_{\bar \eta}$ is itself a product. For instance, let  $k=\C$, $S=M_2$  the moduli space of genus $2$ smooth projective curves (with suitable level structures)
and $\cX:=\hbox{\rm Jac}(\cC|S)\rightarrow S$  the Jacobian of the universal genus $2$ curve $\cC\rightarrow S$.  Then the set  of all  points $s\in S$ such that  $\cX_{\bar s}$ is a product of two elliptic curves\footnote{Observe that if $+:X_1\times X_2 \tilde{\rightarrow}\cX_{\bar s}$ is a product then, necessarily, $X_1, X_2$ are translates of abelian subvarieties of $\cX_{\bar s}$.}  is supported on infinitely many irreducible curves $C_d\hookrightarrow S$. Let $S^{\hbox{\rm \tiny prod}}\subset S$ denote the Zariski-closure of the union of all $C_d$. Then the geometric generic fiber $\cX_{\bar \xi}$  over  the generic point $\xi$ of an irreducible component of $S^{\hbox{\rm \tiny prod}}$ of dimension $\geq 2$  is not a product of two elliptic curves.  See \cite{Kani}  and the references therein for details.}
\end{remarque}
 \begin{corollaire}\label{Cor:GeoSymProductCurve} Let $ \cX\rightarrow S$ an abelian scheme of relative dimension $g $ and $ \iota:\cY\hookrightarrow \cX $ a closed subscheme, smooth, geometrically connected  and of relative dimension $d<\frac{g-1}{2}$ over $S$. Assume $ \cY_{\bar \eta}\hookrightarrow \cX_{\bar \eta}$ has ample normal bundle and trivial stabilizer.  Then the set of  all  $s\in S$ such that $\cY_{\bar s}$ is a symmetric power of a curve  is Zariski-dense in $S$ (if and) only if $\cY_{\bar \eta}$ is itself a symmetric power of a curve. 
 \end{corollaire}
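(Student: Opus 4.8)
The plan is to transcribe the proof of Corollary~\ref{Cor:GeoProduct}, replacing the Tannakian characterisation of products by the one of symmetric powers of curves established in \cite{JKLM}. First, set $\cP:=\iota_*\overline{\Q}_{\ell,\cY}[d]$. By Lemma~\ref{Lem:Geo}~(1), $\cP\in\Perv^{\ULA}(\cX/S)$; by Lemma~\ref{Lem:Geo}~(2), after replacing $S$ by a non-empty open subscheme we may assume that for every $s\in S$ the closed immersion $\cY_{\bar s}\hookrightarrow\cX_{\bar s}$ has ample normal bundle and trivial stabilizer, while the numerical hypothesis $d<\frac{g-1}{2}$ is of course constant in the family. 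Since $\cY\to S$ is smooth and geometrically connected, every fibre $\cY_{\bar s}$ is smooth and irreducible, so $\cP|_{\cX_{\bar s}}=\iota_{\bar s*}\overline{\Q}_{\ell}[d]$ is the intersection complex of $\cY_{\bar s}$ in $\cX_{\bar s}$; in particular it is simple, hence semisimple, in $\Perv(\cX_{\bar s})$ by \cite[Thm.~4.3.1 (ii)]{BBD}, and $G(\cP|_{\cX_{\bar s}})$ is reductive.

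Next, as $G(\cP|_{\cX_{\bar\eta}})$ is reductive, Corollary~\ref{Cor:MTBis}~(2)~(b) applies: after shrinking $S$ once more we may assume that for every $s\in S$ the cospecialisation morphism restricts to an isomorphism $G(\cP|_{\cX_{\bar s}})^{\circ,\der}\simeq G(\cP|_{\cX_{\bar\eta}})^{\circ,\der}$. Being induced by a tensor functor of Tannakian categories, this isomorphism intertwines the tautological representations carried by $\cP|_{\cX_{\bar s}}$ and $\cP|_{\cX_{\bar\eta}}$, so that the isomorphism class of the pair $\bigl(G(\cP|_{\cX_{\bar s}})^{\circ,\der},\,\omega_{\bar s}(\cP|_{\cX_{\bar s}})\bigr)$ does not depend on $s\in S$.

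Then I invoke the classification of \cite{JKLM}: under the standing hypotheses just arranged (simplicity of $\cP|_{\cX_{\bar t}}$, ample normal bundle, trivial stabilizer, and $d<\frac{g-1}{2}$), for every $t\in S$ the subvariety $\cY_{\bar t}$ is a symmetric power of a curve if and only if the pair $\bigl(G(\cP|_{\cX_{\bar t}})^{\circ,\der},\,\omega_{\bar t}(\cP|_{\cX_{\bar t}})\bigr)$ lies on the explicit list distinguishing symmetric powers of curves (concretely, $G(\cP|_{\cX_{\bar t}})^{\circ,\der}$ is a special linear group acting through an exterior power of its standard representation). Together with the previous step, this shows that ``$\cY_{\bar s}$ is a symmetric power of a curve'' holds either for all $s\in S$ or for no $s\in S$. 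The corollary follows: the ``if'' direction is by spreading out, since if $\cY_{\bar\eta}=\mathrm{Sym}^d C$ for a smooth curve $C\hookrightarrow\cX_{\bar\eta}$ then $C$ descends to a finite extension of $k(\eta)$ and spreads out over a non-empty open subscheme of $S$, so $\cY_{\bar s}$ is a symmetric power of a curve on a dense set of points; conversely, if this set is Zariski-dense it meets the shrunk $S$, hence by the dichotomy it is all of $S$, and in particular the property holds at the generic point, i.e.\ $\cY_{\bar\eta}$ is a symmetric power of a curve.

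The step I expect to be the main obstacle is pinning down the exact form of the classification theorem of \cite{JKLM} for symmetric powers of curves, and verifying that the invariant it actually uses — the pair group/representation above, or whatever weaker datum suffices — is precisely the one propagated along the family by Corollary~\ref{Cor:MTBis}~(2)~(b); this is what dictates both successive shrinkings of $S$ and the remark that the cospecialisation isomorphism respects the $\otimes$-structure. Everything else is a verbatim copy of the proof of Corollary~\ref{Cor:GeoProduct}.
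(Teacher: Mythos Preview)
Your overall strategy matches the paper's: set up $\cP$, shrink $S$ via Lemma~\ref{Lem:Geo}, invoke Corollary~\ref{Cor:MTBis}~(2)(b) to make the pair $\bigl(G(\cP|_{\cX_{\bar s}})^{\circ,\der},\omega(\cP|_{\cX_{\bar s}})\bigr)$ constant in $s$, and appeal to the Tannakian characterisation of symmetric powers in \cite{JKLM}. The ``if'' direction by spreading out is also as in the paper.

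The obstacle you flag at the end is, however, real, and it is not merely a matter of looking up the reference. The relevant result, \cite[Thm.~7.3]{JKLM}, asserts that $\cY_{\bar t}$ is a symmetric power of a curve if and only if $G(\cP|_{\cX_{\bar t}})^{\circ,\der}$ acts on $\omega(\cP|_{\cX_{\bar t}})$ as the image of $\mathrm{SL}_n$ on $\wedge^d\mathrm{Std}_n$, \emph{but only under the numerical constraint $2\le d\le n/2$}, where $n=2g_C-2$ is attached to the genus of the curve, not to the relative dimension $g$ of the abelian scheme. The standing hypothesis $d<(g-1)/2$ does not by itself supply this, so your claimed equivalence ``$\cY_{\bar t}$ is a symmetric power $\Longleftrightarrow$ the pair lies on the list'' is not yet available at an arbitrary $t$. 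The paper fills this gap by an argument you are missing: from a single fibre $\cY_{\bar s}\simeq\mathrm{Sym}^d C$ one has, by \cite[Lem.~7.2]{JKLM}, $r:=\chi(\cP)=\binom{n}{d}$, so $n=n(r,d)$ is determined by invariants constant on $S$; then smoothness of $W_d(C)\subset\mathrm{Alb}(C)$ forces, via Riemann's singularity theorem, gonality $\ge d+1$, hence $g_C\ge 2d-1$, whence $n\ge 4$ and $d\le n/2$ as soon as $d\ge 2$. Only after this step is \cite[Thm.~7.3]{JKLM} applicable at every $t\in S$, and your dichotomy then goes through.
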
 
\begin{proof}  The argument is similar to the one for Corollary \ref{Cor:GeoProduct}. If $d=1$ there is nothing to prove so that we may assume $d\geq 2$. Again,   $\cP:=\iota_*\overline{\Q}_{\ell,\cY}[d]$ lies in $\Perv^{\ULA}(\cX/S)$ with $\cP|_{\cX_{\bar \eta}}$  simple - hence semisimple, and,  up to replacing $S$ by a non-empty open subscheme, one may  assume that  for all $s\in S$, $\cY_{\bar s}\hookrightarrow \cX_{\bar s}$  has  ample normal bundle $\cN_{ \cY_{\bar s}/\cX_{\bar s}}$ and    trivial stabilizer.   The if assertion is by spreading out. To prove the only if assertion, let $r:=\chi(\cX_{\bar t},\cP)=(-1)^d\chi(\cY_{\bar t})$ denote the Euler-Poincar\'e characteristic of $\cP|_{\cX_{\bar t}}$ for one (equivalently every) $t\in S$.  Assume that for some  $s\in S$, $\cY_{\bar s}\simeq$Sym$^d(C)$ is   a symmetric power of a curve. Then from \cite[Lem. 7.2]{JKLM}, $G(\cP|_{\cY_{\bar s}})^{\circ\, \der}$ acting on $\omega(\cP|_{\cX_{\bar s}})$ identifies with the image of SL$_{n\overline{\Q}_\ell}$ acting on a wedge power $\wedge^d$Std$_n$ of the standard representation of  SL$_{n\overline{\Q}_\ell}$ with $n=-\chi(C,\overline{\Q}_\ell)=2g_C-2$ (where $g_C$ denotes the genus of $C$).  Note that, as $r=\binom{n}{d}$, $2g_C-2=n=:n(r,d)=$ is uniquely determined by $r$ and $d$ hence is independent of $s\in S$. Furthermore, as $W_d(C)\subset \hbox{\rm Alb}(C)$ is then automatically smooth,  it follows from Riemann's singularity theorem (\textit{e.g.}  \cite[p. 344]{GriffithHarris}) that  $C$ has gonality $\geq d+1$ hence genus $g_C\geq 2d-1$.    As $d\geq 2$, this imposes $g_C\geq 3$ hence $n(r,d)\geq 4$, whence,  $2\leq d\leq \frac{n(r,d)}{2}$. Under this numerical condition, it   follows from  \cite[Thm. 7.3]{JKLM} that, for any $t\in S$, $\cY_{\bar t}\simeq$Sym$^d(C)$ is   a symmetric power of a curve   if and only if $G(\cP|_{\cY_{\bar t}})^{\circ\, \der}$ acting on $\omega(\cP|_{\cX_{\bar t}})$ identifies with the image of SL$_{n\overline{\Q}_\ell}$ acting on a wedge power $\wedge^d$Std$_{n(r,d)}$ of the standard representation of  SL$_{n(r,d)\overline{\Q}_\ell}$. The assertion thus follows, again,  from Corollary \ref{Cor:MTBis} (2) (b).
\end{proof}
\begin{remarque}\label{Rem:GeoSymProductCurve}\textnormal{Using \cite[Thm. 6.1]{KM} instead of \cite[Thm. 7.3]{JKLM}, one could probably relax the assumption that $\cY\rightarrow S$ is smooth.}
\end{remarque}

\noindent \begin{tabular}[t]{l}
\textit{anna.cadoret@imj-prg.fr}\\
IMJ-PRG,  
Sorbonne Universit\'e,\\
 4, place Jussieu, 75252 PARIS Cedex 05, FRANCE\\
\\
\textit{hliu@imj-prg.fr}\\
IRMA, Universit\'e de Strasbourg,\\
7, rue Ren\'e Descartes, F-67084 Strasbourg Cedex, France
\end{tabular}\\
\end{document}